\newcommand{\factor}[2]{\left. \raise 2pt\hbox{\ensuremath{#1}} \right/
        \hskip -2pt\raise -2pt\hbox{\ensuremath{#2}}}
\renewcommand\subsection{
  \renewcommand{\sfdefault}{pag}
  \@startsection{subsection}%
  {2}{0pt}{.8\baselineskip}{.4\baselineskip}{\raggedright
    \sffamily\itshape\small\bfseries
  }}
\renewcommand\section{
  \renewcommand{\sfdefault}{phv}
  \@startsection{section} %
  {1}{0pt}{\baselineskip}{.8\baselineskip}{\centering
    \sffamily
    \scshape
    \bfseries
}}
\setlist[enumerate]{leftmargin=0.8cm}
\setlist[itemize]{leftmargin=0.8cm}
\setlist[description]{leftmargin=0.0cm}
\newtheorem{thm}{Theorem}[section]
\newtheorem{lemma}[thm]{Lemma}
\newtheorem{que}{Question}
\newtheorem{conj}[que]{Conjecture}
\newtheorem{theorem}[thm]{Theorem}
\newtheorem{corollary}[thm]{Corollary}
\newtheorem{proposition}[thm]{Proposition}
\theoremstyle{definition}
\newtheorem{definition}[thm]{Definition}
\newtheorem{example}[thm]{Example}
\theoremstyle{remark}
\newtheorem{remark}[thm]{Remark}
\theoremstyle{remark}
\DeclareMathOperator{\Supp}{Supp}
\DeclareMathOperator{\Exc}{Exc}
\DeclareMathOperator{\mult}{mult}
\DeclarePairedDelimiter{\Set}{\lbrace}{\rbrace} 
\def\bydef{\coloneqq}
\numberwithin{equation}{section}
\newcommand{\PP}{{\mathbb P}}
\newcommand{\bP}{{\mathbb P}}
\newcommand{\C}{{\mathbb C}}
\newcommand{\Q}{{\mathbb Q}}
\newcommand{\R}{{\mathbb R}}
\newcommand{\Z}{{\mathbb Z}}
\newcommand{\Qbar}{{\overline{\Q}}}
\newcommand{\kbar}{{\overline{k}}}
\newcommand{\calC}{{\mathcal C}}
\newcommand{\calL}{{\mathcal L}}
\newcommand{\calO}{{\mathcal O}}
\newcommand{\calX}{{\mathcal X}}
\newcommand{\calZ}{{\mathcal Z}}
\DeclareMathOperator{\supp}{supp}
\DeclareMathOperator{\ord}{ord}
\DeclareMathOperator{\Sym}{Sym}
\DeclareMathOperator{\Pic}{Pic}
\DeclareMathOperator{\pr}{pr}
\numberwithin{equation}{section}
\numberwithin{table}{section}
\title{Nonspecial varieties and Generalized Lang-Vojta conjectures}
\author{Erwan Rousseau}
\address{Erwan Rousseau \newline
	Institut Universitaire de France \& Aix Marseille Univ\\ 
	CNRS, Centrale Marseille, I2M, Marseille, France 
	\& Freiburg Institute for Advanced Studies, University of Freiburg,
Albertstr.19, 79104 Freiburg, Germany} 
\email{erwan.rousseau@univ-amu.fr}
\author{Amos Turchet}
\address{Amos Turchet \newline
Dipartimento di Matematica e Fisica, Universit\'a degli studi Roma 3, L.go S. L. Murialdo 1, 00146 Roma \\
previously: Centro di Ricerca Matematica Ennio De Giorgi, Scuola Normale Superiore, Palazzo Puteano, Piazza dei cavalieri, 3, 56100 Pisa, Italy}
\email{amos.turchet@uniroma3.it}
\author{Julie Tzu-Yueh Wang}
\address{Julie Tzu-Yueh Wang \newline Institute of Mathematics, Academia Sinica 
No.\ 1, Sec.\ 4, Roosevelt Road
Taipei 10617, Taiwan}
\email{jwang@math.sinica.edu.tw}
\subjclass[2010]{14G40, 11J97, 14G05, 32A22.}
\keywords{Campana's conjectures, function fields, Nevanlinna Theory, orbifolds, hyperbolicity}
\begin{document}
\begin{abstract}
  We construct a family of fibered threefolds $X_m \to (S , \Delta)$ such that $X_m$ has no \'etale cover that dominates a variety of general type but it dominates the orbifold $(S,\Delta)$ of general type. Following Campana, the threefolds $X_m$ are called \emph{weakly special} but not \emph{special}. The Weak Specialness Conjecture predicts that a weakly special variety defined over a number field has a potentially dense set of rational points. We prove that if $m$ is big enough the threefolds $X_m$ present behaviours that contradict the function field and analytic analogue of the Weak Specialness Conjecture. We prove our results by adapting the recent method of Ru and Vojta. We also formulate some generalizations of known conjectures on exceptional loci that fit into Campana's program and prove some cases over function fields.
\end{abstract}
\maketitle

\section{Introduction}
A fundamental problem in Diophantine Geometry is to describe the distribution of rational points in an algebraic variety $X$ defined over a number field $k$. The expectation is that the global geometry of $X$ controls the distribution of $X(k)$. In the case in which $X$ is a variety of general type, conjectures of Lang and Vojta (see e.g. \cite[Conjecture 3.7]{Lang91}, \cite[Conjecture 3.4.3]{Vojta87}) predict the existence of a proper Zariski closed subset, called the \emph{exceptional set}, such that in its complement rational points $X(L)$ should be finite for every finite extension $L \supset k$; moreover the exceptional set is expected to be independent of the field of definition. On the opposite side there are the varieties whose rational points are potentially dense, i.e. there exists a finite field extension $L/k$ such that the set of $L$-rational points $X(L)$ is Zariski dense. The conjectures of Lang and Vojta thus predict that rational points in a variety of general type are not potentially dense.\smallskip

It is natural to look for geometrical properties characterizing varieties where the set of rational points is potentially dense. The conjectures mentioned above imply that for such a property to be satisfied on $X$, not only $X$ but none of its finite \'etale covers should dominate a positive-dimensional variety of general type: such varieties are called \emph{weakly special} (see Section \ref{wspecial} for details). In \cite[Conjecture 1.2]{HT} the following Conjecture was stated, which we will denote as the \emph{Weak Specialness Conjecture}: a variety $X$ has a potentially dense set of rational points if and only if $X$ is weakly special.\smallskip

In \cite{Ca04}, Campana introduced the stronger notion of \emph{specialness}: a variety $X$ is \emph{special} if it does not admit any fibration of general type, in the sense of Campana (see Section \ref{special} for details). Campana conjectured that specialness rather than weak-specialness should characterize potential density. Note that these two characterizations differ from each other (even if they agree in dimension up to two) since there exist projective varieties which are weakly special but not special. Such examples were first constructed in \cite{BT} as simply-connected threefolds equipped with an elliptic fibration $\pi: X \to S$ where $S$ has Kodaira dimension $1$. Nevertheless, these examples do not explicitly contradict the Weak Specialness Conjecture since we still lack a method to control the distribution of rational points on these varieties. Therefore it is natural to consider the analogous problem in the analytic and function fields settings. The goal of this article is to study the two conjectures in these settings giving evidence towards Campana conjecture.\smallskip

In the analytic setting, the Green--Griffiths--Lang conjecture predicts that entire curves in varieties of general type should be contained in a proper Zariski closed subset, the already mentioned exceptional set. Following this analogy, Campana has conjectured that specialness (and therefore potential density) should correspond to the existence of Zariski dense entire curves (see \cite{Ca04}). The analytic analogue of the Weak Specialness Conjecture would imply that a weakly special variety admits a Zariski dense entire curve. This was already disproved in \cite{CP}, where it was proven that in some of the examples of weakly special but not special varieties constructed in \cite{BT}, all entire curves are algebraically degenerate (i.e. with non Zariski dense image).\smallskip

Similarly, the conjectures of Lang and Vojta admit analogues for varieties defined over function fields. In our setting, Vojta's height conjecture predicts that given a general type variety $X$ defined over a characteristic zero function field $\kappa(\calC)$ of a smooth integral curve $\calC$, and an ample line bundle $\calL$, there exists a positive constant $\alpha$ such that sections $s: \calC \to X$ not contained in the exceptional set, satisfy $\deg s^*\calL \leq \alpha (2 g(\calC) - 2)$. We say that varieties $X$ satisfying this condition are \emph{pseudo algebraically hyperbolic} and we refer to Definition \ref{def:alghyp} for further details and references. A function field analogue of the Weak Specialness Conjecture would predict that a weakly special variety $X$ does not dominate a positive-dimensional pseudo-algebraically hyperbolic variety.

One of the goals of this paper is to present a list of new examples of varieties that are weakly special but not special: these are of the form $\pi: X_m \to (S,\Delta_\pi)$ where $X_m$ is a threefold, $\pi$ is an elliptic fibration, the compactification $\overline{S}$ of $S$ is a blow-up of $\PP^2$ and $\Delta_\pi = (1-1/m)\widetilde{D_1}$ is an orbifold divisor (see Section \ref{EC} for the precise definition).

The pairs $(S,\Delta_\pi)$ can be seen as orbifold generalizations of surfaces appearing in \cite{CoZa10} which provide examples of simply-connected quasi-projective surface with a non Zariski dense set of integral points. For this class of examples we show that the function field and analytic Weak Specialness Conjecture fail. In particular, we prove the following:

\begin{theorem}[see Proposition \ref{prop:blow_up} and Theorem \ref{deg}]
 In the settings above, there exists $m_0$ such that for all $m \geq m_0$, the following hold:
 \begin{enumerate}
   \item the base of the fibration $X_m \to (S,\Delta_\pi)$ is pseudo algebrically hyperbolic;
   \item every entire curve in $X_m$ is algebraically degenerate.
 \end{enumerate}
   \label{thm:main}
\end{theorem}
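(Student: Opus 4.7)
Both statements should reduce to a hyperbolicity property of the orbifold base $(S,\Delta_\pi)$: since $\pi\colon X_m\to S$ is an elliptic fibration and $\Delta_\pi=(1-1/m)\widetilde{D_1}$ records the multiple fibres of $\pi$, any section $\calC\to X_m$ or any entire curve $\C\to X_m$ pushes forward to an \emph{orbifold} map into $(S,\Delta_\pi)$ whose ramification is constrained by $\Delta_\pi$. The plan is thus to prove (1) by a function field Ru--Vojta estimate on $(\overline{S},\Delta_\pi)$ and (2) by its Nevanlinna analogue, both applied once $m$ is large enough for the log canonical bundle $K_{\overline{S}}+\Delta_\pi$ to be big.

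\textbf{Part (1).} First, I would verify that $K_{\overline{S}}+\Delta_\pi$ becomes big for $m\gg 0$. Since $\overline{S}\to\PP^2$ is the blow-up along the Corvaja--Zannier-type curve configuration, writing $K_{\overline{S}}+\Delta_\pi$ in terms of the pullback of $\calO_{\PP^2}(-3)$, the exceptional divisors, and $(1-1/m)\widetilde{D_1}$, and letting $m\to\infty$, one gets arbitrarily close to the log canonical bundle of a reduced pair $(\overline{S},\widetilde{D_1})$ of log general type; by openness of bigness this is enough. Second, I would invoke the orbifold function field Ru--Vojta inequality adapted in the paper: for any section $s\colon\calC\to S$ with image not contained in an exceptional subvariety $Z$, one obtains a bound of the shape
\[
\deg s^{*}(K_{\overline{S}}+\Delta_\pi)\;\leq\;\alpha\bigl(2g(\calC)-2\bigr),
\]
from which pseudo algebraic hyperbolicity of $(S,\Delta_\pi)$ follows upon comparing $K_{\overline{S}}+\Delta_\pi$ with an ample $\calL$.

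\textbf{Part (2).} For an entire curve $f\colon\C\to X_m$, consider $g=\pi\circ f\colon\C\to S$. Either $g$ is constant---in which case $f$ lies in a single fibre, hence in the proper subvariety $\pi^{-1}(g(0))$ of the threefold---or $g$ is a nonconstant entire curve. Because the fibres of $\pi$ over $\widetilde{D_1}$ are multiple fibres of multiplicity at least $m$, the map $g$ is automatically an orbifold entire curve $\C\to(\overline{S},\Delta_\pi)$. I would then apply the Nevanlinna orbifold Second Main Theorem associated with the now big divisor $K_{\overline{S}}+\Delta_\pi$ to conclude that the image of $g$ lies in a proper algebraic subset $Z\subset S$. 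In either case $f(\C)\subset\pi^{-1}(Z)$, a proper closed subvariety of $X_m$, giving algebraic degeneracy.

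\textbf{Main obstacle.} The technical core is the effective bigness/positivity of $K_{\overline{S}}+\Delta_\pi$ and, more importantly, the bookkeeping required to bring it into the hypotheses of Ru--Vojta: one must trace the blow-up combinatorics $\overline{S}\to\PP^2$, compute intersections of $\widetilde{D_1}$ with the exceptional divisors, and exhibit a collection of sections of $K_{\overline{S}}+\Delta_\pi$ in general position in the sense required by the Ru--Vojta machine. This is where the $(1-1/m)$ coefficient has to appear explicitly in the truncated counting term so that the estimate is nontrivial for $m\geq m_0$. A secondary but conceptually important point is that, in part (2), the orbifold structure on the base must be honoured by every entire curve in $X_m$; this follows from the geometry of multiple elliptic fibres, but the Nevanlinna-type ramification inequality along the multiple fibres has to be invoked carefully to rule out curves that would otherwise escape the degeneracy conclusion.
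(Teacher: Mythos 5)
Your overall strategy matches the paper's: push forward to the orbifold base, apply a Ru--Vojta type orbifold second main theorem (function field and Nevanlinna), and use the multiple-fibre structure of the elliptic fibration to guarantee the induced maps are orbifold morphisms. However, there is a genuine gap in the central step.

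You reduce everything to ``$K_{\overline{S}}+\Delta_\pi$ is big for $m\gg 0$, hence Ru--Vojta applies.'' That is not how the machine runs, and the gap cannot be closed the way you describe. Bigness of the orbifold canonical bundle would make $(\overline{S},\Delta)$ of general type, but the orbifold Lang--Vojta statement for a general type pair is precisely the open conjecture (Conjecture \ref{conj:main_orb}); it is not what Theorems \ref{th:CZ_ff} and \ref{th:CZ_nev} deliver. What the paper actually needs, and what Proposition \ref{prop:blow_up} is devoted to verifying, is the explicit intersection-theoretic hypothesis
\[
2 D_p^2 \,\xi_i \;>\; (D_p\cdot D_i)\,\xi_i^2 + 3 D_p^2\, p_i,\qquad i=1,\dots,q,
\]
for a suitable choice of the weights $p_i$ that makes $D_p=\sum p_i D_i$ ample. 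This is what ensures $\beta(D_p,D_i)>p_i$ (Lemma \ref{lemma:pi}), which is the key numerical input feeding the Ru--Vojta filtration argument. The paper hard-codes the choice $p_i=c/d_i$ for $i\le 3$, $p_4=3c/4$ with $c=4d_1d_2d_3$, and then checks $D_p^2=177d^2 > 176d^2 = 4p_i(D_p\cdot\widetilde D_i)$ and the analogous inequality for $\widetilde H$. Your proposal skips this computation entirely, replacing it with a qualitative bigness/openness argument that has no way of producing the $\beta_i>p_i$ conclusion.

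A second, smaller issue: you apply the orbifold hyperbolicity statement to $(\overline{S},\Delta_\pi)$ with $\Delta_\pi=(1-1/m)\widetilde D_1$ alone, but the base of the fibration is the \emph{quasi-projective} surface $S=\overline{S}\setminus(\widetilde D_2\cup\widetilde D_3\cup\widetilde D_4)$. The paper's Theorem \ref{deg} bounds $\deg\pi(\varphi(\calC))$ in terms of $2g(\calC)-2+N^{[1]}_{\pi\circ\varphi}(\widetilde D_2+\widetilde D_3+\widetilde D_4)$, i.e.\ the Ru--Vojta theorem is applied to the full divisor $(1-1/m)\widetilde D_1+\widetilde D_2+\widetilde D_3+\widetilde D_4$ with infinite multiplicities on the boundary components. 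Your phrasing of the height inequality $\deg s^*(K_{\overline S}+\Delta_\pi)\le\alpha(2g(\calC)-2)$ omits the boundary counting term, which would make the estimate false for rational or elliptic curves meeting $\widetilde D_2+\widetilde D_3+\widetilde D_4$.

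Your treatment of part (2) is closer to the paper: the reduction of an entire curve in $X_m$ to an orbifold entire curve in the base via the multiple-fibre constraint is exactly the paper's argument, and you correctly isolate the need to handle the constant case separately. But again, the degeneracy conclusion must come from Theorem \ref{th:CZ_nev} (whose hypotheses need the same intersection-theoretic check), together with the explicit bound on the truncation order $Q$ and the choice of $m$ large relative to $Q\sum\beta_j/\varepsilon$; ``$K+\Delta$ big implies degeneracy'' is not available as a black box.
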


As an example we obtain the following explicit corollary. 
\begin{corollary}
Let $L_1, L_2, L_3$ and $L_4$ be four lines in general position in $\PP^2$ and let $\overline{S}$ be the blow up of $\PP^2$ in three points $P_1,P_2,P_3$ where $P_i \in L_i$ for $i=1,2,3$. If we denote by $\tilde{L}_i$ the strict transform of the line $L_i$ and by $S = \overline{S} \setminus \tilde{L}_2 + \tilde{L}_3 + \tilde{L}_4$, the surface $S \setminus \tilde{L}_1$ is simply connected, and the orbifold $(\overline{S}, \Delta_m = (1 - 1/m)\tilde{L}_1 + \tilde{L}_2 + \tilde{L}_3 + \tilde{L}_4)$ is of general type for every $m \geq 2$, i.e. the divisor $K_{\overline{S}} + \Delta_m$ is big. Then, $X_m = (T \times_{\PP^1} \overline{S}) \to (\overline{S}, \Delta_m)$ where $T$ is an elliptic fibration (see \ref{ex:1fib} for the construction), is a simply connected threefold that does not dominate any surface or curve of general type, and therefore it is weakly special (see Theorem \ref{tbt}).
Then, the following holds:
\begin{enumerate}
	\item a degree bound for maps of the form $\pi \circ s: \calC \to (\overline{S},\Delta_m)$;
	\item algebraic degeneracy of entire curves in $X_m$.
\end{enumerate}
\end{corollary}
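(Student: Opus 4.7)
The plan is to verify that the explicit four-line configuration falls within the setup of Theorem \ref{thm:main} and then to invoke that theorem directly. This requires checking the two geometric assertions embedded in the statement: that $(\overline{S}, \Delta_m)$ is of general type for every $m \geq 2$, and that $S \setminus \tilde{L}_1$ is simply connected. The fact that the resulting threefold $X_m$ is then simply connected, weakly special, and admits no dominant rational map to a positive-dimensional variety of general type is already the content of Theorem \ref{tbt}.

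I would handle the general type assertion by a direct divisor-class calculation on the blow-up $\overline{S} \to \PP^2$. Writing $H$ for the pullback of the hyperplane class and $E_1, E_2, E_3$ for the exceptional divisors over $P_1, P_2, P_3$, one has $K_{\overline{S}} = -3H + E_1 + E_2 + E_3$, while $\tilde{L}_i = H - E_i$ for $i = 1, 2, 3$ and $\tilde{L}_4 = H$, the $P_i$ being chosen in general position on the $L_i$ so that $L_4$ avoids them. Substituting into the definition of $\Delta_m$ gives
\[
K_{\overline{S}} + \Delta_m = \left(1 - \tfrac{1}{m}\right) H + \tfrac{1}{m}\, E_1,
\]
which is the sum of a big and nef class with a nonzero effective divisor for every $m \geq 2$, and therefore big.

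For the simple-connectedness statement, I would apply a Zariski--van Kampen argument. The complement $\PP^2 \setminus (L_1 \cup L_2 \cup L_3 \cup L_4)$ has abelian fundamental group $\Z^3$, generated by meridians $\gamma_1, \gamma_2, \gamma_3, \gamma_4$ subject to $\gamma_1 + \gamma_2 + \gamma_3 + \gamma_4 = 0$. For each $i = 1, 2, 3$, the exceptional divisor $E_i$ survives in $S \setminus \tilde{L}_1$ minus the single point $\tilde{L}_i \cap E_i$, and the resulting affine line contributes a disk whose boundary is freely homotopic to $\gamma_i$; this kills $\gamma_i$. The remaining relation then forces $\gamma_4 = 0$, so $S \setminus \tilde{L}_1 = \overline{S} \setminus (\tilde{L}_1 + \tilde{L}_2 + \tilde{L}_3 + \tilde{L}_4)$ is simply connected, in agreement with the classical calculation of \cite{CoZa10}.

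With both inputs in place, the two numbered conclusions of the corollary are immediate translations of parts (1) and (2) of Theorem \ref{thm:main} applied to the fibration $X_m \to (\overline{S}, \Delta_m)$: pseudo algebraic hyperbolicity of the base yields the desired degree bound for $\pi \circ s \colon \calC \to (\overline{S}, \Delta_m)$, and part (2) gives algebraic degeneracy of entire curves in $X_m$. The genuinely substantive step is therefore the fundamental-group computation; everything else is either a numerical verification on a blow-up of $\PP^2$ or a direct appeal to the already-proven Theorem \ref{thm:main} and Theorem \ref{tbt}.
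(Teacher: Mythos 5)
Your proposal is correct and follows the same approach the paper leaves implicit: verify the bigness of $K_{\overline{S}} + \Delta_m$ by the divisor-class computation $K_{\overline{S}} + \Delta_m = (1-\tfrac1m)H + \tfrac1m E_1$ (big $+$ effective is big), verify simple connectedness of $S \setminus \tilde{L}_1$ by the Zariski--van Kampen argument of Example \ref{ex:2fib} (each blow-up contracts a meridian $\gamma_i$, and the degree relation $\gamma_1+\gamma_2+\gamma_3+\gamma_4=0$ then kills $\gamma_4$), and then invoke Theorem \ref{tbt} and Theorem \ref{thm:main}. One minor caveat worth stating explicitly: the conclusions (1) and (2) hold only for $m \geq m_0$ with $m_0$ as produced by Theorem \ref{thm:main}, not for all $m \geq 2$; only the general type assertion is claimed for every $m \geq 2$.
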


In \cite[Problem 3.7]{HassettT} Hassett and Tschinkel ask whether a pair that does not admit any \'etale cover that dominates a variety of log general type, i.e. it is weakly special, admit a potentially dense set of integral points. Theorem \ref{thm:main} answers the analogous question over function fields in the negative.

To prove Theorem \ref{thm:main} we combine Corvaja-Zannier's degeneracy statements in \cite{CZAnnals} with the recent framework of Ru and Vojta \cite{RV19} to obtain a generalization of \cite[Main Theorem]{CZAnnals} in the orbifold setting for the function field and analytic case.

\begin{theorem}[see Corollary \ref{cor:CZ_orb} and Theorem \ref{th:CZ_nev}]
  \label{th:CZ_nev_intro}
		 Let $X\subset \PP^m$ be a complex nonsingular projective surface and $D = D_1 + \dots + D_q$ be a divisor with $q \geq 2$, such that
		 \begin{enumerate}
			 \item No three of the components $D_i$ meet at a point;
			 \item There exists a choice of positive integers $p_i$ such that 
				 \begin{itemize}
					 \item the divisor $D_p := p_1 D_1 + p_2 D_2 + \dots + p_q D_q$ is ample;
					 \item The following inequality holds:
						 \[
							 2 D_p^2 \xi_i > (D_p \cdot D_i) \xi_i^2 + 3 D_p^2 p_i,
						 \]
						 for every $i = 1,\dots,q$ where $\xi_i$ is the minimal positive solution of the equation $D_i^2 x^2 - 2(D_p \cdot D_i) x + D_p^2 = 0$.
				 \end{itemize}
		 \end{enumerate}
		Let $\Delta$ be the $\Q$-divisor defined as
	\[
		\Delta = \sum_{i=1}^q \left( 1 - \frac{1}{m_i} \right) D_i.
	\]
	Then, there exists a positive integer $m$ such that, if $m_i \geq m$ for every $i$, 
	\begin{enumerate}
		\item every orbifold entire curve $\psi: \C \to (X,\Delta)$, is algebraically degenerate;
		\item $(X,\Delta)$ is pseudo algebraically hyperbolic.
	\end{enumerate}
	 \end{theorem}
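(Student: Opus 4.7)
The plan is to derive both conclusions from a single orbifold Corvaja--Zannier-type inequality on the surface. I would work with the ample $\Q$-divisor $D_p = \sum p_i D_i$, apply the Ru--Vojta machinery built around the Seshadri-type $\beta$-constants $\beta(D_p, D_i)$, and then use the orbifold structure to dominate the truncated counting function at $D_i$ by $(1/m_i)\,T_{\psi, D_i}(r)$; for large $m_i$ this gain beats the Ru--Vojta error and forces algebraic degeneracy of $\psi$ (respectively a height bound of the form $\alpha(2g(\calC)-2)$ in the function field setting).

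First I would compute $\beta(D_p, D_i)$ via the standard filtration of $H^0(X, ND_p)$ by order of vanishing along $D_i$. On a surface, asymptotic Riemann--Roch gives
\[
\beta(D_p, D_i) \;\geq\; \xi_i \;-\; \frac{(D_p \cdot D_i)\,\xi_i^{2}}{D_p^{2}} \;+\; \frac{D_i^{2}\,\xi_i^{3}}{3\, D_p^{2}},
\]
where the integration range is cut off at the pseudo-effective threshold $\xi_i$ of $D_p - xD_i$. Substituting the defining relation $D_i^{2}\,\xi_i^{2} = 2(D_p\cdot D_i)\,\xi_i - D_p^{2}$ collapses the cubic term and yields the clean bound
\[
\beta(D_p, D_i) \;\geq\; \frac{2\xi_i}{3} \;-\; \frac{(D_p\cdot D_i)\,\xi_i^{2}}{3\, D_p^{2}},
\]
so the quantitative hypothesis reads exactly as $\beta(D_p, D_i) > p_i$. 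Next I would invoke a truncated Ru--Vojta/Cartan-type inequality, available in both the Nevanlinna and function field settings: for any algebraically non-degenerate $\psi$ and any $\varepsilon > 0$,
\[
\sum_{i=1}^{q} \beta(D_p, D_i)\bigl(T_{\psi, D_i}(r) - N_{\psi}^{(1)}(r, D_i)\bigr) \;\leq_{\mathrm{exc}}\; (1+\varepsilon)\, T_{\psi, D_p}(r),
\]
with the obvious function-field analogue in which degrees $\deg s^{*} D_i$ replace the Nevanlinna functions and the error is proportional to $2g(\calC)-2$. The hypothesis that no three of the $D_i$ meet at a point is what allows the explicit Corvaja--Zannier section construction to be inserted into the Ru--Vojta filtration and produce the truncated form.

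With these two ingredients in place, the orbifold condition $\operatorname{mult}_z(\psi^{*} D_i) \geq m_i$ at every preimage of $D_i$ gives $N_{\psi}^{(1)}(r, D_i) \leq (1/m_i)\, T_{\psi, D_i}(r) + O(1)$. Substituting yields
\[
\sum_i \beta(D_p, D_i)\bigl(1 - 1/m_i\bigr) T_{\psi, D_i}(r) \;\leq_{\mathrm{exc}}\; (1+\varepsilon) \sum_i p_i\, T_{\psi, D_i}(r) + O(1).
\]
Since $\beta(D_p, D_i) > p_i$ strictly, choosing $m$ large enough that $\beta(D_p, D_i)(1 - 1/m_i) > p_i + \delta$ uniformly in $i$, and then $\varepsilon$ small, makes the inequality incompatible with $T_{\psi, D_p}(r) \to \infty$; hence $\psi$ is algebraically degenerate. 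The parallel argument over function fields bounds $\deg s^{*} D_p$ by a multiple of $2g(\calC)-2$, giving pseudo algebraic hyperbolicity of $(X,\Delta)$.

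The principal obstacle is producing the \emph{truncated} Ru--Vojta inequality: without truncation at level $1$, the orbifold multiplicity condition gives no useful control on $N_{\psi}(r, D_i)$ (only on $N_{\psi}^{(1)}$), and the argument collapses. Obtaining this truncation requires a careful adaptation of the Corvaja--Zannier surface construction to the $\beta$-constant framework of Ru--Vojta, and this is precisely where the ``no three components meeting at a point'' hypothesis is used crucially; once it is established, the computation of $\beta(D_p, D_i)$ and the exploitation of the orbifold multiplicities are essentially formal.
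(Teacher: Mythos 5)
Your reduction to $\beta(D_p, D_i) > p_i$ via the filtration and Riemann--Roch computation matches the paper's Lemma~\ref{lemma:pi} exactly, and the overall shape of the argument---a truncated Ru--Vojta inequality combined with the orbifold multiplicity bound on the truncated counting function, then letting $m$ grow to kill the error---is the right idea for the Nevanlinna-theoretic conclusion. But the proposal has two genuine gaps.

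First, you invoke a Ru--Vojta inequality truncated at level $1$, and you assert that without level-$1$ truncation the argument collapses. Both claims are incorrect. The truncated second main theorem (Theorem~\ref{trungeneral}) that the paper proves, via the Wronskian term in the Vojta--Ru general form of the SMT, yields truncation at some level $Q$ depending on $\varepsilon$, the line bundle, and the filtration data; level-$1$ truncation in this generality is not attainable with these techniques. However level-$Q$ truncation is enough: the orbifold condition gives $N_f^{(Q)}(D_j, r) \leq (Q/m_j)\, N_f(D_j, r) \leq (Q/m_j)\, T_{D_p,f}(r)$, and since $Q$ is fixed once $\varepsilon$ is chosen, one simply takes $m$ large relative to $Q$. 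So the argument does not collapse at all; it just needs the honest truncation level. Relatedly, the hypothesis that no three $D_i$ meet at a point is used only to ensure general position so the Ru--Vojta machinery applies; the truncation itself comes from the Wronskian, not from this hypothesis.

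Second, your description of the function field side does not match either the available tools or the target statement. The function field Ru--Vojta variant (Theorem~\ref{th:ffconst_b}) does not carry a truncated counting term; its error is $c_1\max\{1, 2g(\calC)-2+|S|\}$, and taking $S=\supp(\varphi^*D)$ turns $|S|$ into $N_\varphi^{[1]}(D)$. Your claim that the error is ``proportional to $2g(\calC)-2$'' is therefore wrong; the correct output is the log bound $\deg\varphi^*D_p \leq \alpha\max\{1, 2g(\calC)-2+N_\varphi^{[1]}(D_p)\}$ of Theorem~\ref{th:CZ_ff}. Moreover, orbifold pseudo algebraic hyperbolicity (Definition~\ref{def:ah_orb}) concerns \emph{orbifold} morphisms $\psi:(\calC,\Delta_\calC)\to(X,\Delta)$, with a bound in terms of $\deg\Delta_\calC$, not of $2g(\calC)-2$ alone. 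Passing from the log bound to that orbifold statement requires a separate geometric step (Lemma~\ref{lemma:orbif_ff}), which controls $N_\psi^{[1]}(\supp\Delta)$ by $\deg\widetilde{\Delta}$ plus $\sum_j \deg(\psi^*\Delta_j)/m_j$ and then absorbs the latter into the ample line bundle for $m_j$ large; your proposal omits this entirely, so as written it does not prove part (2) of the statement.
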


The key point in the proof of Theorem \ref{th:CZ_nev_intro} is an analytic and a function field version of Ru-Vojta's Theorem \cite[General Theorem]{RV19} with truncation, which we develop  in Theorem \ref{trungeneral} and Theorem \ref{thm:ffconst} by adding a ramification term to Ru-Vojta's Theorem. Even though this type of generalization is far from reach in the arithmetic setting, our approach via the Ru-Vojta's Theorem gives a new interpretation of some important work such as \cite{CZAnnals}, \cite{Levin}, and \cite{Aut2} that has contributed fundamental ideas and techniques in the development of the Ru-Vojta's Theorem. We expect that this point of view and the truncated version of Ru-Vojta's Theorem will have further applications.

	 \subsection*{Structure of the paper} In Section \ref{special} we recall basic facts about special varieties and orbifolds, following Campana. Then, in Section \ref{conj}, we present a general framework where we generalize Lang's notion of exceptional loci to nonspecial varieties and Campana orbifolds. In Section \ref{wspecial} we discuss a general procedure to construct weakly special but not special varieties, generalizing \cite{BT}. In Section \ref{sec:vojta} we prove Theorem \ref{th:CZ_nev_intro} and we apply it in Section \ref{EC} to prove Theorem \ref{thm:main}. Finally, in Section \ref{sec:SMT} we prove some generalizations of Ru and Vojta \cite[General Theorem]{RV19} for function fields and entire curves.

\subsection*{Acknowledgements}
We thank Laura Capuano, Pietro Corvaja, Lionel Darondeau, Carlo Gasbarri, Ariyan Javanpeykar, Stefan Kebekus and Min Ru for useful discussions. We thank Min Ru and Paul Vojta for providing us with an earlier copy of their paper. We are especially grateful to Fr\'ed\'eric Campana for all the discussions on this subject and especially his explanations on the construction of weakly special but not special varieties. We thank the referees for their comments and suggestions which greatly improved the paper.

This collaboration was initiated at the conference ``Topics On Nevanlinna Theory And Complex Hyperbolicities'' at the Shanghai Center for Mathematical Sciences; we thank the organizers for making this possible. Part of this work was finalized during the visit of the first and second named author to the Freiburg Institute for Advanced Studies; they thank the Institute for providing an excellent working environment.

\section{Special varieties}\label{special}
We collect here basic definitions and constructions related to special varieties, while referring to \cite{Ca04} for more details.

\subsection{Special Manifolds via Bogomolov sheaves}
Let \(X\) be a connected complex nonsingular projective variety of complex dimension \(n\). 
For a  rank-one coherent subsheaf \(\calL\subset\Omega^p_X\), denote by \(H^0(X,\calL^{m})\) the space of sections of \(\Sym^m(\Omega^p_X)\) which take values in \(\calL^{m}\) (where as usual $\calL^m \bydef \calL^{\otimes m}$).
The \emph{Iitaka dimension} of $\calL$ is \(\kappa(X,\calL)\bydef\max_{m>0}\{\dim(\Phi_{\calL^m}(X))\}\), i.e. the maximum dimension of the image of rational maps
\(\Phi_{\calL^{m}}\colon X\dasharrow\PP(H^0(X,\calL^{m}))\) defined at the generic point of \(X\), 
where by convention \(\dim(\Phi_{\calL^{m}}(X))\bydef-\infty\) if there are no global sections.
Thus \(\kappa(X,\calL)\in \Set{-\infty, 0,1,\dotsc, \dim(X)}\). 
In this setting, a theorem of Bogomolov in \cite{Bog} shows that, if \(\calL\subset\Omega_{X}^{p}\), then \(\kappa(X,\calL)\leq p\).
\begin{definition}
  Let $p >0$. A rank one saturated coherent sheaf \(\calL\subset\Omega_{X}^{p}\)
  is called a \emph{Bogomolov sheaf} if \(\kappa(X,\calL)=p\), i.e. if $\calL$ has the largest possible Iitaka dimension.
\end{definition}

The following remark shows that the presence of Bogomolov sheaves on $X$ is related to the existence of fibrations $f: X\to Y$ where $Y$ is of general type.

\begin{remark}
  If \(f\colon X\to Y\) is a fibration (by which we mean a surjective morphism with connected fibers) and \(Y\) is a variety of general type of dimension \(p>0\), then the saturation of \(f^*(K_Y)\) in \(\Omega^p_X\) is a Bogomolov sheaf of \(X\), 
\end{remark}

Campana introduced the notion of specialness in \cite[Definition 2.1]{Ca04} to generalize the absence of fibration as above.

\begin{definition}\label{def:special}
  A nonsingular variety \(X\) is said to be \textsl{special} (or \textsl{of special type}) if there is no Bogomolov sheaf on $X$. 
  A projective variety is said to be \textsl{special} if some (or any) of its resolutions are special.
\end{definition}

By the previous remark if there is a fibration $X \to Y$ with $Y$ a positive dimensional variety of general type then $X$ is nonspecial. In particular, if \(X\) is of general type of positive dimension, $X$ is not special.

\subsection{Special Manifolds via orbifold bases} 
Campana gave a characterization of special varieties using his theory of orbifolds. We briefly recall the construction.

Let $Z$ be a normal connected compact complex variety.
An \textsl{orbifold divisor} \(\Delta\) is a linear combination \(\Delta\bydef\sum_{\Set{D\subset Z}}c_{\Delta}(D)\cdot D\), 
where \(D\) ranges over all prime divisors of \(Z\),
the \textsl{orbifold coefficients} are rational numbers \(c_{\Delta}(D)\in[0,1]\cap\Q\) such that all but finitely many are zero.
Equivalently,
\[
  \Delta
  =
  \sum_{\{D\subset Z\}}
  \left(
    1-\frac{1}{m_{\Delta}(D)}
  \right)
  \cdot
  D,
\]
where only finitely many \textsl{orbifold multiplicities} \(m_{\Delta}(D)\in\Q_{\geq 1}\cup \{+\infty\}\) are larger than \(1\).

An orbifold pair is a pair \((Z,\Delta)\) where $\Delta$ is an orbifold divisor; they interpolate between the compact case where \(\Delta=\varnothing\) and the pair \((Z,\varnothing)=Z\) has no orbifold structure, and the {\it open}, or {\it purely-logarithmic case} where $c_j = 1$ for all $j$, and we identify \((Z,\Delta)$ with $Z\setminus\Supp(\Delta)\).

When \(Z\) is smooth and the support \(\Supp(\Delta)\bydef\cup D_j\) of \(\Delta\) has normal crossings singularities, we say that \((Z,\Delta)\) is {\it smooth}. When all multiplicities \(m_j\) are integral or \(+\infty\), we say that the orbifold pair \((Z,\Delta)\) is {\it integral}, and when every $m_j$ is finite it may be thought of as a virtual ramified cover of \(Z\) ramifying at order \(m_j\) over each of the \(D_j\)'s.\medskip

Consider a fibration \(f\colon X\to Z\) between normal connected complex projective varieties. In general, the geometric invariants (such as \(\pi_1(X), \kappa(X),\dots\)) of \(X\) do not coincide with the `sum' of those of the base (\(Z\)) and of the generic fiber (\(X_\eta\)) of \(f\).
Replacing \(Z\) by the `orbifold base' \((Z,\Delta_f)\) of \(f\), which encodes the multiple fibers of \(f\), leads in some favorable cases to such an additivity (on suitable birational models at least).

\begin{definition}[Orbifold base of a fibration]
  \label{dob} 
  Let \(f\colon X \to Z\) be a fibration as above and let $\Delta$ be an orbifold divisor on \(X\). 
  We shall define the \textsl{orbifold base} \((Z,\Delta_{f})\) of \((f,\Delta)\) as follows: to each irreducible Weil divisor \(D\subset Z\) we assign the multiplicity \(m_{(f,\Delta)}(D)\bydef\inf_k\{t_k\cdot m_\Delta(F_k)\}\), where the scheme-theoretic fiber of $D$ is \(f^*(D)=\sum_k t_k.F_k+R\), \(R\) is an \(f\)-exceptional divisor of \(X\) with \(f(R)\subsetneq D\) and \(F_k\) are the irreducible divisors of \(X\) which map surjectively to \(D\) via $f$.
\end{definition}
\begin{remark}
  Note that the integers \(t_k\) are well-defined, even if \(X\) is only assumed to be normal.
\end{remark}

Let \((Z,\Delta)\) be an orbifold pair. Assume that \(K_Z+\Delta\) is \(\Q\)-Cartier (this is the case if \((Z,\Delta)\) is smooth, for example): we will call it the \emph{canonical bundle} of \((Z,\Delta)\). Similarly we will denote by the {\it canonical dimension} of $(Z,\Delta)$ the Kodaira dimension of $K_Z + \Delta$ i.e. $\kappa(Z,\Delta): = \kappa(Z,\calO_Z(K_Z+\Delta))$. Finally, we say that the orbifold \((Z,\Delta)\) is of {\it general type} if \(\kappa(Z,\Delta)=\dim(Z)\).

\begin{definition}
A fibration \(f\colon X\to Z\) is said to be \textsl{of general type} if $Z$ is positive dimensional and the orbifold base \((Z,\Delta_{f})\) is of general type.
\end{definition}

The idea of Campana was that in order to characterize varieties that have a potentially dense set of rational points it was not sufficient to exclude the presence of \'etale covers that dominate varieties of general type. One would need to exclude the presence of every fibration of general type in the above sense. This turns out to be equivalent to the specialness condition of Definition \ref{def:special} as proven by Campana.

\begin{theorem}[\protect{see \cite[Theorem 2.27]{Ca04}}]
  A variety \(X\) is special if and only if it has no fibrations of general type.
\end{theorem}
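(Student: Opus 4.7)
The plan is to prove both implications separately, the forward one (existence of a general type fibration implies existence of a Bogomolov sheaf) being substantially easier than the converse.

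For the forward direction, suppose we have a fibration \(f\colon X\to Z\) with \(\dim Z=p>0\) and the orbifold base \((Z,\Delta_f)\) of general type. The first key input is the ``orbifold cotangent'' inclusion: on a suitable log-resolution one produces, for a divisible enough integer \(m\) clearing the orbifold denominators in \(\Delta_f\), an injection
\[
f^{*}\bigl(\calO_Z(m(K_Z+\Delta_f))\bigr)\ \hookrightarrow\ \Sym^{m}(\Omega^{p}_X).
\]
The construction uses the local inclusion \(f^*(\det\Omega_Z)\hookrightarrow\Omega^p_X\) on the smooth locus and incorporates the multiple-fiber contribution of \(\Delta_f\) via the explicit local generators of \(\Omega^p_X\) around each component \(F_k\) with multiplicity \(t_k\) in \(f^{*}(D)\); the definition of \(m_{(f,\Delta)}(D)\) in Definition \ref{dob} is tailored so that these local computations work. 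Setting \(\calL\) to be the saturation in \(\Omega^p_X\) of an \(m\)-th root of the image (after a suitable cover or by working in the rank-one reflexive setting), the hypothesis that \(K_Z+\Delta_f\) is big gives \(\kappa(X,\calL)\ge p\). Combined with Bogomolov's inequality \(\kappa(X,\calL)\le p\), this forces equality, producing a Bogomolov sheaf and hence showing \(X\) is not special.

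For the converse, suppose \(\calL\subset\Omega^p_X\) is a Bogomolov sheaf, so \(\kappa(X,\calL)=p\). Consider the Iitaka fibration \(\Phi_{\calL^m}\colon X\dashrightarrow Y\) for \(m\gg 0\); up to replacing \(X\) by a birational model, we obtain an honest fibration \(f\colon X\to Z\) with \(\dim Z=p\) and a base-change comparison showing that the pluri-generic fibers of \(f\) contract sections of \(\calL^{m}\). The core task is to endow \(Z\) with the orbifold divisor \(\Delta_f\) of Definition \ref{dob} and to prove that \(K_Z+\Delta_f\) is big. The natural way to do this is to reverse the construction of the forward direction: the inclusion \(\calL\hookrightarrow\Omega^p_X\) factors (up to saturation and after suitable twist by the multiple-fiber data) through \(f^{*}(K_Z+\Delta_f)^{\otimes 1/N}\) for some \(N\), so that
\[
\kappa(Z,K_Z+\Delta_f)\ \ge\ \kappa(X,\calL)\ =\ p\ =\ \dim Z,
\]
which is exactly the general type condition on \((Z,\Delta_f)\).

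The hard step is undoubtedly the converse, and within it the identification of the Iitaka fibration of \(\calL\) with a fibration whose orbifold base is of general type. The delicate points are (i) choosing a birational model of \(X\to Z\) on which the orbifold multiplicities \(m_{(f,\Delta)}(D)\) are well-defined and stable (a neat-fibration type condition), and (ii) controlling the difference between \(\calL\) and \(f^{*}(K_Z+\Delta_f)\) sufficiently precisely to extract bigness on the base from bigness of \(\calL\). This is essentially the content of Campana's orbifold additivity/\(C_{n,m}\)-style results for the canonical bundle, which are the technical heart of \cite{Ca04}; after these are in hand, the two directions of the equivalence become symmetric mirror images.
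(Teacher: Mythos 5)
The paper does not supply a proof for this statement; it cites Campana's Theorem~2.27 directly, so there is no internal proof to compare against. Your sketch does track the broad architecture of Campana's actual argument: the forward direction reduces to showing that the saturation $\calL_f$ of $f^*K_Z$ in $\Omega^p_X$ satisfies $\kappa(X,\calL_f)=\kappa(Z,K_Z+\Delta_f)$, from which bigness of $K_Z+\Delta_f$ gives a Bogomolov sheaf, and the converse passes through the Iitaka fibration of $\calL$.

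There is, however, one step in your converse that is under-articulated and is in fact the second deep input alongside the orbifold $\kappa$-comparison: you assert that the Iitaka fibration $f:X\dashrightarrow Z$ of $\calL^m$ "geometrically realizes" $\calL$, i.e.\ that on a birational model the inclusion $\calL\hookrightarrow\Omega^p_X$ actually factors through $f^*\Omega^p_Z=f^*K_Z$ (up to saturation). This is \emph{not} automatic from the Iitaka fibration machinery, which only tells you that sections of $\calL^m$ define the map to $Z$; one needs the Bogomolov--Castelnuovo--De~Franchis theorem (the higher-rank generalization of the classical $p=1$ case) to conclude that a rank-one subsheaf of $\Omega^p_X$ of maximal Iitaka dimension $p$ is, up to saturation and after a modification, the pullback of $K_Z$ under a fibration. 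Your phrasing ("a base-change comparison showing that the pluri-generic fibers contract sections of $\calL^m$", and later "controlling the difference between $\calL$ and $f^*(K_Z+\Delta_f)$") gestures at this but does not state the theorem or explain why $\calL$ is integrable in this sense. A second, smaller imprecision: in the forward direction there is no need to take an "$m$-th root"; one takes $\calL_f$ to be the saturation of $f^*K_Z$ directly, and the orbifold divisor $\Delta_f$ is precisely calibrated so that $\kappa(X,\calL_f)=\kappa(Z,K_Z+\Delta_f)$ on a neat model. Filling in the Bogomolov--Castelnuovo--De~Franchis step would make the converse complete.
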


\subsection{Orbifold Morphisms}
We recall here the main definition of orbifold morphism, following \cite[Definition 2.3]{Ca11}, in the special case in which the source is a curve.

\begin{definition}
  Let $(X,\Delta_X)$ and $(\calC,\Delta_\calC)$ two orbifold pairs, with $X$ and $\calC$ normal projective varieties defined over a field $k$, $\calC$ a curve, and $\Delta_X,\Delta_\calC$ two orbifold divisors of the form 
		\[
			\Delta_X = \sum_{i=1}^r \left( 1 - \frac{1}{m_i}\right) \Delta_i \qquad \Delta_\calC = \sum_{j=1}^s \left( 1 - \frac{1}{n_j}\right) P_j
		\]
		For every prime divisor $D$ of $X$ we denote by $m_X(D)$ its multiplicity, i.e. the number $m_i$ if $D = \Delta_i$ or 0 otherwise. For every point $Q \in \calC$ we similarly define $n_\calC(Q)$.
		A morphism $f: \calC \to X$ is an \emph{orbifold morphism}, denoted by $f: (\calC, \Delta_\calC) \to (X, \Delta_X)$ if 
		\begin{enumerate}
			\item $f(\calC)$ is \emph{not} contained in $\Delta_X$;
			\item for every prime divisor $D$ of $X$, if $f^*D = \sum_k n_k Q_k$, one has $n_k \cdot n_\calC(Q_k) \geq m_X(D)$.
		\end{enumerate}
\end{definition}

We recall the following definition introduced by Demailly in \cite{De97} in the compact case, while the logarithmic and orbifold analogue were introduced in \cite{Chen} and \cite{Rou} respectively.

\begin{definition}
	Let $(X,D)$ be a pair of a non-singular projective variety $X$ defined over $k$ and let $D$ be a normal crossing divisor on $X$. We say that $(X,D)$ is \emph{algebraically hyperbolic} if there exists a ample line bundle $\calL$ on $X$ and a positive constant $\alpha$ such that for every non-singular projective curve $\calC$ and every morphism $\varphi: \calC \to X$ the following holds:
	\begin{equation}
		\deg \varphi^* \calL \leq \alpha \cdot (2g(\calC) - 2 + N_\varphi^{[1]}(D)).
		\label{eq:alghyp}
	\end{equation}
	where $N_\varphi^{[1]}(D)$ is the cardinality of the \emph{support} of $\varphi^*(D)$.
	We say that $(X,D)$ is \emph{pseudo} algebraically hyperbolic if there exists a proper closed subvariety $Z$ of $X$ such that \eqref{eq:alghyp} holds for every morphism $\varphi: \calC \to X$ such that $\varphi(\calC) \nsubseteq Z$.
	\label{def:alghyp}
\end{definition}

The notion of \emph{pseudo}-algebraic hyperbolicity was defined first in \cite{Jav19, Jav19b}. In Lang's terminology \cite{Lan86}, the notion is the ``pseudofication'' of the notion of algebraic hyperbolicity. 
We note that when Equation \eqref{eq:alghyp} holds for an ample line bundle $\calL$ then it holds for \emph{every} ample line bundle with possibly a different constant $\alpha$.

\begin{remark}
	Note that the degree $\deg \varphi^* \calL$, is a Weil Height for $\varphi$ viewed as a point in the function field $k(\calC)$, with respect to the ample line bundle $\calL$. In the next sections, we refer to it both as a degree and as the height $h_\calL(\varphi)$.
\end{remark}

Definition \ref{def:alghyp} can be extended to the orbifold setting as follows.

\begin{definition}\label{def:ah_orb}
	Let $(X,\Delta)$ be an orbifold; we say that $(X,\Delta)$ is \emph{algebraically hyperbolic} if there exists a ample line bundle $\calL$ on $X$ and a positive constant $\alpha$ such that for every non-singular projective curve $\calC$ and every \emph{orbifold} morphism $\psi: (\calC,\Delta_C) \to (X,\Delta)$ the following holds:
	\begin{equation}
		\deg \psi^* \calL \leq \alpha \cdot (2g(\calC) - 2 + \deg \Delta_C).
		\label{eq:algorb}
	\end{equation}
	We say that $(X,\Delta)$ is \emph{pseudo} algebraically hyperbolic if there exists a proper closed subvariety $Z$ of $X$ such that \eqref{eq:algorb} holds for every orbifold morphism $\psi: (\calC, \Delta_C) \to (X,\Delta)$ such that $\psi(\calC) \nsubseteq Z$.
\end{definition}

\section{Generalized Lang conjectures}\label{conj}
In this section we propose a generalization of the conjectures of Lang and Vojta compatible with Campana's dicothomy between special and nonspecial.

\subsection{Arithmetic and Analytic Exceptional sets}
In \cite{Lan86}, Lang introduced the following exceptional sets.
\begin{definition}
Let $X$ be a projective variety defined over a field $k$.
\begin{enumerate}
  \item If $k$ is a finitely generated field of characteristic zero, then the \emph{Diophantine exceptional set} $\Exc_{\text{dio}}(X)$ is the smallest Zariski-closed subset $Z$ of $X$ such that $(X\setminus Z)(L)$ is finite for all finite extensions $L \supset k$.
  \item If $k=\C$ then the \emph{holomorphic exceptional set} $\Exc_{\text{hol}}(X)$ is the Zariski closure of the union of all entire curves i.e. images of non-constant holomorphic maps $f: \C \to X$.
\end{enumerate}
\label{def:exceptional}
\end{definition}

Lang has conjectured \cite{Lan86} that $X$ is of general type if and only if both these exceptional sets are proper subsets of $X$. Moreover he conjectured that $\Exc_{\text{dio}}(X) = \Exc_{\text{hol}}(X)$.

Given Campana's notion of special variety, it is natural to try to extend the notion of exceptional sets to \emph{nonspecial} varieties in the sense of Campana \cite{Ca04}. 

The starting point is the following conjecture, formulated by Campana \cite[Conjecture 9.2 and Conjecture 9.20]{Ca04}.
\begin{conj}\label{conj:Camp}
Let $X$ be a projective variety defined over a field $k$.
\begin{enumerate}
\item If $k$ is a number field, then $X(L)$ is not Zariski dense for all finite extensions $L \supset k$ if and only if $X$ is not special
\item If $k=\C$ then $f(\C)$ is not Zariski dense for all entire curves $f: \C \to X$ if and only if $X$ is not special.
\end{enumerate}
\end{conj}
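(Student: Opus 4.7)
The statement is in fact Campana's conjecture, and a full proof remains out of reach; what I can offer is a programmatic plan that makes precise where the difficulties lie. The natural strategy is to treat the two implications separately and reduce each to conjectures (or known theorems in special cases) about fibrations of general type, exploiting the characterization recalled earlier: $X$ is nonspecial if and only if it admits a fibration $f\colon X \to Z$ whose orbifold base $(Z,\Delta_f)$ is of general type.

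For the ``only if'' direction in both (1) and (2), I would argue as follows. Suppose $X$ is nonspecial and choose a fibration $f\colon X\to Z$ with $(Z,\Delta_f)$ of general type. Any rational point $P \in X(L)$ maps to a rational point $f(P) \in Z(L)$, and similarly every entire curve $\varphi\colon \C\to X$ induces an entire curve $f\circ\varphi\colon \C \to Z$. Moreover, the definition of $\Delta_f$ is designed precisely so that, after suitable birational modifications of $X$ and $Z$, the image $f(P)$ (respectively $f\circ\varphi$) is an orbifold point (respectively an orbifold entire curve) of $(Z,\Delta_f)$, at least on a Zariski open subset. Applying the orbifold Lang--Vojta conjecture (in the arithmetic case) or the orbifold Green--Griffiths--Lang conjecture (in the analytic case) to the general type orbifold $(Z,\Delta_f)$ would confine the images in $Z$ to a proper Zariski closed subset $W\subsetneq Z$; pulling back gives that $X(L)$ (respectively $\varphi(\C)$) lies in $f^{-1}(W)\cup\{\text{bad locus}\}$, which is a proper Zariski closed subset of $X$. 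So in this direction the conjecture is essentially equivalent to the orbifold Lang--Vojta/Green--Griffiths--Lang conjecture for the base, and the technical work is in choosing the birational model carefully so that the orbifold condition on points and maps is genuinely satisfied rather than just at the generic fiber.

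For the ``if'' direction, which is much deeper, the only known route is via Campana's core map $c_X\colon X \dashrightarrow C(X)$, which is the unique (up to birational equivalence) fibration whose base is of general type and whose general fiber is special. If $X$ is special then $c_X$ is trivial, i.e. $X = C(X)$ is its own core fiber. The program would be to produce rational points (respectively Zariski dense entire curves) on special varieties. Here one would try to combine: potential density results for rationally connected varieties and abelian varieties (both known to be special), fibration-type arguments to spread density from special fibers across special bases (this is where Campana's decomposition of specialness into rationally connected and ``orbifold Kodaira dimension zero'' parts becomes crucial), and specific constructions in low dimensions or under additional geometric hypotheses.

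The main obstacle is precisely the ``if'' direction: even in dimension two, potential density of rational points on special surfaces is open in cases such as bielliptic surfaces over sufficiently generic number fields, and the analytic analogue for K3 or Enriques orbifolds is similarly delicate. A reasonable intermediate target is the implication ``special $\Rightarrow$ Zariski dense entire curve'' for orbifold surfaces of Kodaira dimension zero, where one might hope to use deformations of elliptic or rational curves together with the Kawamata--Bogomolov vanishing. Beyond these partial results, the conjecture serves mainly as a guiding principle, and the contribution of the present paper is not to prove it but to test the weaker \emph{Weak Specialness Conjecture} against it and disprove the latter in the function field and analytic settings, as stated in Theorem~\ref{thm:main}.
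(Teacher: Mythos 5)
This is Conjecture~\ref{conj:Camp}, attributed to Campana; the paper states it without proof and uses it only as the starting point for the generalizations proposed in Section~\ref{conj}. You correctly recognize that no proof exists and that the paper does not attempt one, and your programmatic outline (reduce the nonspecial~$\Rightarrow$~degenerate direction to orbifold Lang--Vojta/Green--Griffiths--Lang on the orbifold base of a general-type fibration; attack the special~$\Rightarrow$~potentially dense direction via the core map and known density results for rationally connected and abelian-type building blocks) is consistent with how the paper frames the problem and with the partial evidence it cites (Faltings, Kawamata, Yamanoi for subvarieties of abelian varieties).

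One small slip: you have the labels of the two implications reversed. In the statement ``$X(L)$ not dense for all $L$ \emph{if and only if} $X$ is not special,'' the ``if'' part is ``not special $\Rightarrow$ not dense'' (the direction you reduce to Lang--Vojta), and the ``only if'' part is ``not dense $\Rightarrow$ not special,'' equivalently ``special $\Rightarrow$ potentially dense'' (the direction you attack via the core map). You argue the right things for each implication; only the names ``if''/``only if'' are swapped.
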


When one considers nonspecial varieties, it is easy to see that there are examples of nonspecial varieties where the two exceptional sets in Definition \ref{def:exceptional} are the entire variety. As an example, consider the nonspecial variety $X=\calC\times \bP^1$ defined over a number field $k$, where $\calC$ is a smooth projective curve of genus $\geq 2$: in this case for every rational point $P$ in $\calC(L)$ the curve $\{P\} \times \PP^1$ is in $\Exc_{\text{dio}}(X)$. Therefore since $X(\kbar)$ is dense in $X$, Lang's Diophantine exceptional set coincides with $X$. Similarly one can show that $\Exc_{\text{hol}}(X) = X$.
We suggest that, in order to define meaningful exceptional sets, one should consider the projectivized tangent bundle $\PP(T_X)$. 

\begin{definition}\leavevmode
  \label{def:exc}
\begin{enumerate}
\item If $k$ is a number field, the Diophantine exceptional set $\Exc^1_{\text{dio}}(X)$ is the smallest Zariski-closed subset $Z$ of $\PP(T_X)$ such that for or all finite extensions $L \supset k$, $\bP(T_{Y_L}) \subset Z$, where $Y_L$ is the Zariski closure of $X(L)$.

\item If $k=\C$, the holomorphic exceptional set $\Exc^1_{\text{hol}}(X)$ is the Zariski closure of the union of all entire curves $g: \C \to \PP(T_X)$ obtained as liftings of entire curves $f:\C \to X$.
\end{enumerate}
\end{definition}

The main motivation behind Definition \ref{def:exc} is Campana's Core construction in \cite[Section 3]{Ca04}. Given a smooth projective variety $X$ there is a functorial fibration $c_X: X \to C(X)$, called the \emph{core} of $X$ such that the fibers of $c_X$ are special varieties and the base $C(X)$ is either a point or an orbifold of general type. The idea behind considering $\PP(T_X)$ as the natural space where the exceptional set live, is that the core of $X$ identifies the ``special direction'' in $\PP(T_X)$ and therefore, assuming that $C(X)$ has positive dimension, i.e. $X$ is nonspecial, this exceptional set should not be the whole $\PP(T_X)$.

Therefore we propose the following generalization of Lang conjecture for nonspecial varieties.

\begin{conj}\label{conj:LO}
Let $X$ be a projective variety defined over a field $k$.
\begin{enumerate}
\item If $k$ is a number field then $\Exc^1_{\text{dio}}(X) \neq \PP(T_X)$ if and only if $X$ is not special
\item If $k=\C$ then $\Exc^1_{\text{hol}}(X) \neq \PP(T_X)$ if and only if $X$ is not special.
\end{enumerate}
\end{conj}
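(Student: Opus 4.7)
The plan is to prove both directions of the conjecture by reducing to Campana's Conjecture~\ref{conj:Camp} via the core fibration $c_X \colon X \to (C(X), \Delta_{c_X})$, whose orbifold base is of general type and is positive-dimensional precisely when $X$ is nonspecial. The key geometric input is the relative tangent exact sequence
\[
0 \longrightarrow T_{X/C(X)} \longrightarrow T_X \longrightarrow c_X^{*} T_{C(X)} \longrightarrow 0,
\]
which exhibits $\PP(T_{X/C(X)})$ as a proper closed subset of $\PP(T_X)$ whenever $\dim C(X) > 0$, and isolates the ``special'' tangent directions coming from the fibres of $c_X$.

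For the implication ``$X$ special $\Rightarrow \Exc^{1} = \PP(T_X)$'' in the Diophantine case, Conjecture~\ref{conj:Camp}(1) furnishes a finite extension $L/k$ with $X(L)$ Zariski dense, so that $Y_L = X$ and therefore $\PP(T_{Y_L}) = \PP(T_X) \subset \Exc^{1}_{\mathrm{dio}}(X)$. In the holomorphic case Conjecture~\ref{conj:Camp}(2) produces a Zariski-dense entire curve $f \colon \C \to X$, but one must further verify that the union of tangent lifts $t \mapsto (f(t), [f'(t)])$ of \emph{all} such entire curves is Zariski dense in $\PP(T_X)$. I would approach this by constructing, for a dense set of pairs $(x, [v]) \in \PP(T_X)$, an entire curve through $x$ with prescribed direction $[v]$; this is a natural infinitesimal strengthening of the existence of Zariski-dense entire curves, reflecting the expectation that specialness imposes no tangent-level obstruction.

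For the reverse implication ``$X$ nonspecial $\Rightarrow \Exc^{1} \ne \PP(T_X)$,'' assume $\dim C(X) \geq 1$ and let $\pi \colon \PP(T_X) \to X$ denote the projection. Given an entire curve $f \colon \C \to X$, either $c_X \circ f$ is constant---in which case $f$ lies in a fibre of $c_X$ and the tangent lift factors through $\PP(T_{X/C(X)})$---or $c_X \circ f$ is a nonconstant orbifold entire curve into the general-type orbifold $(C(X), \Delta_{c_X})$, whose image, by the (conjectural) orbifold Green--Griffiths--Lang property, is contained in a proper closed subset $Z \subsetneq C(X)$; the tangent lift of $f$ is then contained in $\pi^{-1}(c_X^{-1}(Z))$. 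In either case the lift lies in the proper closed subset $\PP(T_{X/C(X)}) \cup \pi^{-1}(c_X^{-1}(Z))$ of $\PP(T_X)$. The Diophantine analogue is formally identical, using the orbifold Lang conjecture in place of its Green--Griffiths--Lang counterpart to confine $c_X(X(L))$ in a proper closed subset of $C(X)$ for every finite extension $L/k$.

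The principal obstacle is that this strategy is conditional on both Campana's Conjecture~\ref{conj:Camp} for $X$ itself and its orbifold Lang / Green--Griffiths--Lang counterpart for the general-type orbifold $(C(X), \Delta_{c_X})$; moreover, the holomorphic half of the first direction requires a tangent-level refinement of Campana's conjecture that goes beyond its current formulation. A further technical point is verifying that $c_X \circ f$ is a genuine orbifold morphism into $(C(X), \Delta_{c_X})$, which demands a careful multiplicity analysis at the multiple fibres of the core in the spirit of Campana's original constructions. In the function-field and analytic settings investigated in the present paper, Theorems~\ref{thm:main} and~\ref{th:CZ_nev_intro} provide unconditional evidence for this strategy on the explicit families $X_m$.
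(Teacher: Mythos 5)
This statement is labeled \texttt{conj} in the source and is a \emph{conjecture}, not a theorem; the paper offers no proof of it, nor does it claim one. The authors only motivate the definition of $\Exc^1$, illustrate it on $\calC\times\PP^1$, and remark that Conjecture~\ref{conj:LO} is known when $X$ is a closed subvariety of an abelian variety, citing Faltings, Ueno, Bloch--Ochiai--Kawamata, and Yamanoi. There is therefore no ``paper's own proof'' to compare against.

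Your proposal, to your credit, is honest that it is a conditional reduction rather than a proof: it relies on Conjecture~\ref{conj:Camp}, on the orbifold Lang/Green--Griffiths--Lang conjecture for the core orbifold base $(C(X),\Delta_{c_X})$, and, for the holomorphic forward direction, on an additional unproven ``tangent-level'' density refinement of Campana's conjecture. Each of these is a genuine open problem, so as written this cannot count as a proof of the statement. Beyond that, two further technical points deserve scrutiny. First, the core map $c_X\colon X\dashrightarrow C(X)$ is only a meromorphic (almost-holomorphic) fibration defined up to birational modification; there is no exact sequence $0\to T_{X/C(X)}\to T_X\to c_X^*T_{C(X)}\to 0$ on $X$ itself, only on a suitable modification over the locus where $c_X$ is a smooth morphism, so the claimed proper closed subset $\PP(T_{X/C(X)})\subset\PP(T_X)$ needs to be interpreted (and then pushed forward) with some care. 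Second, even granting the orbifold Green--Griffiths--Lang hypothesis, showing that $c_X\circ f$ is a bona fide \emph{orbifold} entire curve into $(C(X),\Delta_{c_X})$ requires the multiplicity analysis you flag; this is precisely what Campana's orbifold-base construction is designed to make work, but it is not automatic for an arbitrary $f$ and would need to be verified on the chosen model. In summary: since the statement is a conjecture, your discussion is best read as a heuristic explaining why Conjecture~\ref{conj:LO} is plausible and how it fits with Conjecture~\ref{conj:Camp}, which is consistent in spirit with the paper's own motivation, but neither you nor the paper proves it.
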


In the example above $X = \calC \times \PP^1$ with $\calC$ an hyperbolic curve, we see that for every number field $L$ the closed subvariety $Y_L$, the closure of $X(L)$, is the union of finitely many rational curves, corresponding to the fibers of $\pr_1 = c_X$ over the $L$-rational points of $\calC$. Thus $\Exc^1_{\text{dio}}(X) = \PP\pr_2^*(T_{\PP^1})$ and in particular $\Exc^1_{\text{dio}}(X) \neq \PP(T_X)$.

If $X$ is a closed subvariety of an abelian variety, then conjecture \ref{conj:LO} holds  by Faltings's proof of Mordell-Lang  \cite{FaltingsLang} and the work of Ueno, Bloch-Ochiai-Kawamata \cite{Kawamata} and Yamanoi  \cite{Yamanoi} on closed subvarieties of abelian varieties.

Comparing Conjecture \ref{conj:Camp} and Conjecture \ref{conj:LO} suggests interesting questions. In particular a strong uniform degeneracy statement in $\PP(T_X)$ should imply a non density statement in $X$.
\begin{conj}
Let $X$ be a projective variety defined over a field $k$.
\begin{enumerate}
	\item If $k$ is a number field and $\Exc^1_{\text{dio}}(X) = \PP(T_X)$ then $X(k)$ is potentially dense.
	\item If $k=\C$ and $\Exc^1_{\text{hol}}(X) \neq \PP(T_X)$ then entire curves $f:\C \to X$ are algebraically degenerate i.e. the images $f(\C)$ are not Zariski dense.
\end{enumerate}
\end{conj}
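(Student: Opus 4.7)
The plan is to derive both parts from Conjecture \ref{conj:Camp} together with the Lang--Vojta conjecture for orbifold pairs of general type (and its analytic Green--Griffiths--Lang analogue). The proposed argument is therefore conditional on these predictions; the point is to show that, once Campana's dichotomy is granted, the exceptional set in $\PP(T_X)$ is controlled by the core fibration $c_X\colon X \to C(X)$.

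\textbf{Part (1).} I would prove the contrapositive: assuming $X(k)$ is not potentially dense, one constructs a proper closed subvariety $Z \subsetneq \PP(T_X)$ containing $\PP(T_{Y_L})$ for every finite extension $L \supset k$, which forces $\Exc^1_{\text{dio}}(X) \neq \PP(T_X)$. By Conjecture \ref{conj:Camp}(1), the failure of potential density implies that $X$ is not special, so Campana's core theorem yields a fibration $c_X \colon X \to C(X)$ with special general fibers and orbifold base $(C(X), \Delta_{c_X})$ of general type, with $\dim C(X) > 0$. The Lang--Vojta conjecture applied to $(C(X), \Delta_{c_X})$ furnishes a proper closed subvariety $E \subsetneq C(X)$, independent of $L$, such that $C(X)(L) \setminus E$ is finite for every finite extension $L \supset k$. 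Now let $Y$ be an irreducible component of $Y_L = \overline{X(L)}$: since $Y$ is the Zariski closure of an infinite subset of $X(L)$, so is $c_X(Y)$; if $\dim c_X(Y) > 0$ then $c_X(Y)$ must lie in $E$ (otherwise only finitely many of its $L$-points would remain), hence $Y \subseteq c_X^{-1}(E)$; if instead $\dim c_X(Y) = 0$ then $Y$ lies in a single fiber of $c_X$ and $T_Y \subseteq T_{X/C(X)}|_Y$. Setting $Z \bydef \PP(T_{c_X^{-1}(E)}) \cup \PP(T_{X/C(X)})$, this gives $\PP(T_{Y_L}) \subseteq Z$, and $Z$ is a proper closed subvariety of $\PP(T_X)$ independent of $L$ (the first summand is proper since $E \subsetneq C(X)$, the second since $\dim C(X) > 0$).

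\textbf{Part (2).} The same core-fibration template applies, with the Lang--Vojta input replaced by the Green--Griffiths--Lang conjecture for orbifolds of general type. Arguing contrapositively, suppose $X$ admits a Zariski dense entire curve $f \colon \C \to X$. By the analytic half of Conjecture \ref{conj:Camp}, $X$ is then special, so its core $C(X)$ is a point and the geometric obstruction used in part (1) vanishes. It remains to show that specialness together with a single Zariski dense entire curve implies $\Exc^1_{\text{hol}}(X) = \PP(T_X)$, i.e.\ that $f$ can be promoted to a family of entire curves whose tangent directions sweep out every fiber $\PP(T_x X)$. A natural plan is to combine the reparametrizations $z \mapsto f(az + b)$ with deformations through Campana's iterated orbifold tower of $X$, using the density of $f$ to propagate them to nearby fibers.

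\textbf{Main obstacle.} Both reductions rely on Conjecture \ref{conj:Camp} and on the orbifold Lang--Vojta/Green--Griffiths--Lang conjectures, so the argument is deeply conditional. The genuinely hard step is the last one in part (2): there is at present no general mechanism that turns the existence of a single Zariski dense entire curve in a special variety into a family whose liftings to $\PP(T_X)$ are Zariski dense, and I expect this to be the principal obstruction to removing the conditionality of the sketch.
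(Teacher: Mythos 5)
This statement is a \emph{conjecture} in the paper, not a theorem: the text states it without proof, offering only the brief remark that analogues of the phenomenon have been established in the foliated setting (\cite{McQ98}, \cite{LRT}). So there is nothing in the paper to compare your argument against; what you have written is a conditional heuristic, and you say as much. That disclaimer is appropriate and should stay up front.

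Within those terms, your treatment of part (1) follows exactly the philosophy the authors have in mind: take the contrapositive, invoke Conjecture~\ref{conj:Camp} to pass from ``not potentially dense'' to ``nonspecial,'' then feed the orbifold base of Campana's core $c_X\colon X \to C(X)$ into the orbifold Lang--Vojta conjecture to produce a proper closed $E \subsetneq C(X)$, and take $Z = \PP\bigl(T_{c_X^{-1}(E)}\bigr) \cup \PP\bigl(T_{X/C(X)}\bigr)$. Your dimension count showing $Z$ is proper is correct. The one place where you gloss over a genuine subtlety is the step ``$C(X)(L)\setminus E$ is finite, hence $c_X(Y) \subseteq E$'': orbifold Lang--Vojta constrains \emph{orbifold} $L$-points of $(C(X),\Delta_{c_X})$, not arbitrary $L$-points of $C(X)$, so you need the nontrivial input (part of Campana's package, but itself delicate over number fields because it depends on integral models) that images under $c_X$ of $L$-rational points of $X$ are orbifold $L$-points of the base. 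You should flag this as a further conditional ingredient rather than treat it as automatic.

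For part (2), you correctly observe that the core-fibration template collapses once $X$ is special, and you correctly isolate the real problem: nothing currently converts a \emph{single} Zariski-dense entire curve $f\colon \C \to X$ into a Zariski-dense union of lifted curves in $\PP(T_X)$, which has dimension $2\dim X - 1$. The reparametrizations $z \mapsto f(az+b)$ all lift to the same curve in $\PP(T_X)$ up to reparametrization, so they do not help. This is indeed where the argument must stop, and you are right to call it the principal obstruction. One small structural point worth adding: under the arithmetic/analytic dictionary (potential density $\leftrightarrow$ existence of a dense entire curve), parts (1) and (2) are \emph{converses} of each other, not translations; this is why your two contrapositive arguments end up being genuinely different in kind, and why part (2) cannot be settled by simply rerunning the part (1) machinery on the analytic side.
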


Interestingly, some examples of this phenomenon have already been proved in the foliated setting. If one considers a complex projective manifold $X$ equipped with a holomorphic (singular) foliation $\mathcal{F}$ such that all entire curves $f:\C \to X$ are tangent to this foliation, then it is proved in \cite{McQ98} that if $X$ is a surface of general type and $\mathcal{F}$ is a foliation by curves, then entire curves are indeed algebraically degenerate. More recently, the same conclusion is shown in \cite{LRT} if $\mathcal{F}$ is a transversely hyperbolic foliation of codimension $1$ and $X$ an arbitrary complex projective manifold $X$ (not necessarily of general type!).

\subsection{Function Fields}
Given the well-known analogy between number fields and function fields, we formulate the above conjectures in the function field setting. Let $\kappa$ be an algebraically closed field of characteristic 0, let $\calC$ be a smooth complex projective curve and let $\kappa(\calC)$ be its function field over $\kappa$. Let $X$ be a proper variety defined over $\kappa(\calC)$ and let $g: \calX \to \calC$ be the fibration associated to a proper model of $X$ over $\calC$. In this setting, $\kappa(\calC)$-rational points correspond to sections $s: \calC \to \calX$ of $g$ and points defined over a finite extension $\kappa(\calC') \supset \kappa(\calC)$ correspond to sections of the base change $\calX' := \calX \times_\calC \calC' \to \calC'$ via a cover $\calC' \to \calC$. We say that $X$ is \emph{isotrivial} if there exists a model $\calX$ over $\calC$ and a cover $\calC' \to \calC$ such that $\calX'$ is birational to $F\times \calC'$ where $F$ is a manifold.

In this setting, Lang's conjecture (in its weak version) can be formulated in the following way (see \cite[Historical appendix: algebraic families]{Lan86} for a historical discussion).
\begin{conj}\label{conj:LFF}
  Let $X$ be non-isotrivial. If $X$ is of general type then for all finite extensions $\kappa(\calC') \supset \kappa(\calC)$, $\kappa(\calC')$-points in $X$ are not Zariski dense.
\end{conj}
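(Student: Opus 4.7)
The statement is the function-field analogue of Lang's conjecture for varieties of general type, which is a fundamental open problem, so rather than a complete proof I would outline a strategy patched together from the known partial results plus the orbifold/Ru--Vojta machinery developed in the paper. The overall plan would be to reduce the desired non-density statement to a Vojta-type height inequality for sections: if one can show that there exist a big line bundle $\calL$ on a projective model $\calX \to \calC$ and a proper Zariski closed $Z \subsetneq X$ such that every section $s : \calC' \to \calX'$ with $s(\calC') \not\subset Z$ satisfies a bound $\deg s^{*}\calL \leq \alpha(2g(\calC') - 2)$ for a universal constant $\alpha$ (independent of the cover $\calC' \to \calC$), then non-isotriviality of $X$ together with a standard spreading-out and de Franchis-type argument forces the set of such sections, and hence $X(\kappa(\calC'))$, to be contained in $Z$ for each fixed $\calC'$; iterating over finite covers and taking a limit gives the non-density.

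First I would dispose of the low-dimensional and special cases where the conjecture is known. In dimension one, the conjecture follows from the function-field Mordell theorem of Manin and Grauert. For closed subvarieties of abelian varieties, one invokes the Mordell--Lang theorem in the function-field setting (Raynaud, Buium, Hrushovski), exactly as in the analogue for $\Exc^1_{\text{dio}}$ discussed just before the conjecture in the paper. For surfaces of general type with $c_1^2 > c_2$, Bogomolov's theorem on the finiteness of curves of bounded genus and degree applies after spreading out: the existence of a big symmetric differential (i.e.\ a Bogomolov sheaf of rank one in $\Sym^m \Omega^1_X$) combined with an equidistribution/tautological lifting argument à la Miyaoka produces the required proper exceptional locus.

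For the general case, I would try to leverage positivity of $K_X$ via the Bogomolov sheaves singled out in Definition and Remark preceding Definition~\ref{def:special}. Since $X$ is of general type, $\calO_X(mK_X) \subset \Sym^m \Omega^{\dim X}_X$ is a Bogomolov sheaf, so it produces a rational map $X \dashrightarrow Y$ with $Y$ of general type and $\dim Y = \dim X$. A section $s : \calC' \to \calX'$ lifts to a map into the projectivization of some twist of $\Sym^m \Omega^1$; the goal is then to convert positivity of $K_X$ into an effective upper bound on the height of $s$ of the shape demanded above. The natural tool here is a tautological/jet-space inequality in the spirit of the Grauert--Parshin--Arakelov method (used in Manin's proof of function-field Mordell) applied to pluricanonical jet bundles rather than to $\Omega^1$ alone.

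The main obstacle is precisely this last step: a genuinely higher-dimensional Parshin-type inequality that turns bigness of $K_X$ into a height bound with a quadratic-in-genus slope. In dimension $\geq 2$ this is essentially Vojta's height conjecture and is not known in the function-field setting beyond special loci (subvarieties of semi-abelian varieties, certain hyperbolic hypersurfaces, the Ru--Vojta framework developed in Section~\ref{sec:SMT} of this paper, which however produces a truncated counting term adapted to integral points with respect to a divisor rather than to $K_X$ itself). I would therefore expect a complete proof to require either a substantial extension of the Ru--Vojta technology to handle $K_X$-heights directly, or new positivity input (e.g.\ a Miyaoka--Yau-type inequality in arbitrary dimension) feeding into the jet differential / orbifold-morphism framework of Definitions~\ref{def:alghyp}--\ref{def:ah_orb}; absent such an input, the best one can currently hope for is to verify Conjecture~\ref{conj:LFF} for large classes of examples rather than unconditionally.
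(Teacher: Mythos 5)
The statement you were asked about is labeled as a \emph{Conjecture} in the paper (Conjecture~\ref{conj:LFF}), and the paper does not prove it; it simply records it as the function-field analogue of Lang's conjecture and notes that special cases are known (Manin and Grauert for curves, Noguchi for ample cotangent bundle). You correctly identify it as a fundamental open problem rather than a theorem, and your survey of the landscape --- reduction to a Vojta-type height inequality, the Manin--Grauert case, Mordell--Lang for subvarieties of abelian varieties, Bogomolov-type arguments for $c_1^2 > c_2$ surfaces, and the obstruction in general being a higher-dimensional Parshin-type inequality equivalent to Vojta's height conjecture --- is a fair and accurate account of the state of the art; it is broadly consistent with the paper's much briefer remarks following the conjecture. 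Since there is no proof in the paper to compare your outline against, the only thing worth flagging is that your proposal is, as you say yourself, a strategy sketch and not a proof, and it correctly stops at the point where no unconditional argument is currently available.
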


Special cases of Conjecture \ref{conj:LFF} have been proved in the literature. The analogue of Mordell's conjecture over function fields was proved by Manin \cite{Manin63} and Grauert \cite{Gra65}. A higher dimensional version is obtained in \cite{Nog81} in the case of ample cotangent bundle.

Similarly, the generalization of Conjecture \ref{conj:Camp} is formulated as follows.
\begin{conj}
	\label{conj:ff_nonsp}
	Let $X$ be non-isotrivial. Then $X$ is not special if and only if for all finite extensions $\kappa(\calC') \supset \kappa(\calC)$, $\kappa(\calC')$-points in $X$ are not Zariski dense.
\end{conj}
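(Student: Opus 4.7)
The plan is to treat the two implications separately, reducing both to statements accessible via Campana's core fibration $c_X : X \to C(X)$. For the only-if direction, assume $X$ is not special, so that $\dim C(X) > 0$ and $(C(X),\Delta_{c_X})$ is an orbifold of general type. Given a $\kappa(\calC')$-point of $X$, viewed as a section $s : \calC' \to \calX'$ of a proper model, composition with a suitable birational model of $c_X$ produces an \emph{orbifold} morphism $(\calC', \Delta_{\calC'}) \to (C(X), \Delta_{c_X})$, where the multiplicities on the source are read off from the base change $\calC' \to \calC$; this is exactly the content of Definition \ref{dob}. A function field Lang-Vojta statement for orbifolds of general type would then force the images of all such orbifold morphisms into a fixed proper Zariski closed subset $Z \subsetneq C(X)$. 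Combined with a study of the behaviour of sections along the (special) fibres of $c_X$, which in the favourable cases are themselves assembled from sections of a fibration of lower dimension, this would yield non-density of $X(\kappa(\calC'))$.

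The main obstacle in this direction is precisely the orbifold pseudo algebraic hyperbolicity of Definition \ref{def:ah_orb} applied to $(C(X),\Delta_{c_X})$. This is what Theorem \ref{th:CZ_nev_intro} establishes in dimension two under explicit numerical hypotheses on the configuration of the boundary divisor, via a truncated version of Ru-Vojta's General Theorem developed later in the paper. An unconditional proof in arbitrary dimension is currently out of reach and is essentially equivalent to a Vojta-type height inequality in the orbifold category over function fields; one should expect to have to combine Campana's orbifold MMP with a ramified second main theorem of great generality, together with some mechanism analogous to the Corvaja-Zannier degeneracy argument used in the proof of Theorem \ref{th:CZ_nev_intro}.

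For the if direction one argues by contrapositive, attempting to exhibit dense sections when $X$ is special. Here $c_X$ is trivial, and the natural strategy is to peel $X$ via Campana's structure theory into orbifold fibrations whose general fibres are orbifold rationally connected over a special base of lower dimension, and induct on $\dim X$. For the rationally connected pieces one would invoke an orbifold function field analogue of Graber-Harris-Starr in order to lift sections of the base to sections of the total space; for the base one uses the inductive hypothesis. The main obstacle is that the required orbifold Graber-Harris-Starr statement, with sufficient control to produce Zariski dense families of sections after finite covers, is presently unavailable, and even the non-orbifold version demands non-trivial input. In particular, this implication subsumes Campana's potential density conjecture for special varieties in the function field setting, which is open in essentially all non-trivial cases; the present paper gives no evidence for it beyond complementing the counterexamples to the weak specialness conjecture produced by Theorem \ref{thm:main}.
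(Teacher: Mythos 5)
This statement is a \emph{conjecture} in the paper (Conjecture \ref{conj:ff_nonsp}); the authors explicitly present it as the function field analogue of Campana's Conjecture \ref{conj:Camp} and offer no proof. You have correctly recognized this and, rather than fabricating an argument, have supplied a candid high-level roadmap of what a proof would require. Your outline is consistent with the framework the paper develops: the only-if direction does indeed reduce, via Campana's core $c_X$ and Definition \ref{dob}, to an orbifold Lang-Vojta height bound on the general type base, of which Theorem \ref{th:CZ_nev_intro} is a two-dimensional instance; and the if direction is the function field version of Campana's potential density conjecture for special varieties, which would require some orbifold analogue of Graber-Harris-Starr together with an induction through the $\kappa$-rational quotient and Iitaka fibration steps. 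Two small refinements worth keeping in mind: first, the core $c_X$ is only a rational fibration, so the reduction of a section of $\calX' \to \calC'$ to an orbifold morphism requires passing to a birational model on which $c_X$ is a genuine morphism and the orbifold base is well-defined, and one must then verify that the resulting closed exceptional locus pulls back to a proper subset of $X$ rather than all of it; second, in the if direction the conjecture only demands density over \emph{some} finite extension $\kappa(\calC')$, so it is enough to produce a single cover with a dense family of sections, which is the correct (and slightly weaker) target for the inductive construction you sketch.
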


Lang's stronger conjecture predicts that the existence of an \emph{exceptional set}, and moreover such set should be independent of the field of definition of the points. 

\begin{remark}
If one wants to formulate a strong version of Conjecture \ref{conj:LFF} analogous to Lang Conjecture for varieties of general type over number fields, non-isotriviality is not enough as shown by the example of a product of an isotrivial variety with a non-isotrivial one. One has to add some assumption on the non-density of the subvarieties which are dominated by isotrivial varieties. Such a strong conjecture is considered in \cite{Ca11} Conjecture 13.21.
\end{remark}

Like in the number field case, a stronger form of Conjecture \ref{conj:LFF} cannot be translated directly to nonspecial varieties. Instead we propose the following analogue of Conjecture \ref{conj:LO} in the function field setting.
\begin{conj}
Let $X$ be non-isotrivial. $X$ is not special if and only if for all finite extensions $\kappa(\calC') \supset \kappa(\calC)$ there is a proper algebraic subset $Z \subset \PP(T_X)$ such that  sections $s: \calC' \to \calX'$ whose liftings $s_1: \calC'\to \PP(T_\calX)$ are not contained in $Z$ are finite.
\end{conj}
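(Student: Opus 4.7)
The plan is to organize the analysis around Campana's core fibration $c_X\colon X\to C(X)$, so that $X$ is special precisely when $C(X)$ is a point, while in the nonspecial case $(C(X),\Delta_{c_X})$ is a positive-dimensional orbifold of general type. The ``special $\Rightarrow$ no such $Z$'' direction should be extracted from a strong form of Conjecture \ref{conj:ff_nonsp}: if $X$ is special then $\kappa(\calC')$-sections are expected to be Zariski dense after a finite base change, and a deformation/flexibility argument in the fibers of the core should allow one to perturb any given section so that, for every proposed proper $Z\subset\PP(T_\calX)$, one produces a section whose tangent lifting escapes $Z$; this precludes such a $Z$ from existing.

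For the converse, assume $X$ is nonspecial. After passing to a smooth relative compactification, extend the core to a fibration $c_\calX\colon\calX\to\calY$ over $\calC$, and set
\begin{equation*}
Z_1\;\bydef\;\overline{\PP(T_{\calX/\calY})}\;\subset\;\PP(T_\calX),
\end{equation*}
the closure of the relative tangent directions along $c_\calX$. Since $\dim C(X)>0$, $Z_1$ is a proper subvariety, and any section $s\colon\calC'\to\calX'$ whose composition $c_\calX\circ s$ is constant lifts tautologically into $Z_1$. The remaining sections induce, by the very definition of the orbifold base (Definition \ref{dob}), orbifold morphisms $(\calC',\Delta_{\calC'})\to(C(X),\Delta_{c_X})$ into the general type orbifold base. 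Invoking an orbifold function field Lang--Vojta statement for $(C(X),\Delta_{c_X})$ — a higher-dimensional extension of the pseudo algebraic hyperbolicity produced by Theorem \ref{th:CZ_nev_intro} and Definition \ref{def:ah_orb} — yields a proper $W\subsetneq C(X)$ containing the image of all but finitely many such orbifold sections. Setting $Z_2\bydef\PP(T_\calX)|_{c_\calX^{-1}(W)}$ and $Z\bydef Z_1\cup Z_2$ then produces the required proper exceptional subset of $\PP(T_\calX)$.

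The principal obstacle is that the orbifold function field Vojta-type input needed for $(C(X),\Delta_{c_X})$ is only established in very restricted settings: Theorem \ref{th:CZ_nev_intro} handles surfaces under explicit Ru--Vojta numerical hypotheses and sufficiently large orbifold multiplicities, whereas the core of a nonspecial variety of arbitrary dimension delivers orbifold bases of essentially arbitrary shape, together with boundary divisors that need not satisfy any such condition. A secondary obstacle is upgrading ``non Zariski dense'' to ``finite'' on the non-vertical locus, which requires a Parshin--Arakelov style rigidity/boundedness statement for non-isotrivial families of sections into the general type orbifold $(C(X),\Delta_{c_X})$ — presently conjectural in this orbifold generality — and, on the vertical side, a uniformity across the fibers of the core that prevents an infinite family of sections concentrated in vertical tangent directions. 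In this sense, the conjecture effectively packages a function field refinement of Campana's conjecture together with a precise description of its exceptional locus, and a full proof would presuppose advances on both fronts.
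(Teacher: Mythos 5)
This statement is a \emph{conjecture} in the paper, not a theorem; the paper proposes it (motivated by Campana's core construction and the analogy with Conjecture \ref{conj:LO}) and offers no proof, so there is no paper argument against which to check yours. What you have written is a plausibility analysis rather than a proof, and you say so explicitly in your final paragraph, which is the right call.

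Your framework is the natural one and matches the paper's motivation: reduce to the core $c_X\colon X\to C(X)$, isolate the relative tangent directions $\PP(T_{\calX/\calY})$ as one component of $Z$, and funnel the remaining ``transverse'' sections into orbifold morphisms to the general type orbifold base $(C(X),\Delta_{c_X})$, where an orbifold function field Lang--Vojta input would provide non-density. The two obstacles you flag are exactly the substantive ones: (i) the orbifold function field Vojta statement for $(C(X),\Delta_{c_X})$ is currently available only for a narrow class of surface orbifolds satisfying Ru--Vojta-type numerical conditions (Theorem \ref{th:CZ_nev_intro}, Conjecture \ref{conj:main_orb}), nowhere near the generality of cores of arbitrary nonspecial varieties; and (ii) upgrading ``non-dense'' to ``finite'' requires a Parshin--Arakelov style boundedness/rigidity statement in orbifold generality, which is open. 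One further point worth making explicit: in your first direction you invoke a ``strong form of Conjecture \ref{conj:ff_nonsp}'' plus a fiber-deformation argument to escape any proposed $Z\subsetneq\PP(T_X)$ when $X$ is special. That escape step is where the non-isotriviality hypothesis has to be used with care, since it is precisely what rules out trivial families of sections; as written the deformation argument is heuristic, but then so is everything at this level of generality, and the paper itself treats the whole statement as aspirational. In short, your proposal is a faithful reconstruction of the intended picture, correctly labelled as conditional on open inputs.
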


In the number field case, conjectures of Vojta (see \cite[Conjecture 3.4.3]{Vojta87} and \cite[Conjecture 24.3]{Vojta11}) predict a height bound for rational points in varieties of general type outside of the exceptional locus, which implies a non-density statement. 

In the function field setting, a height bound is also expected, which is the content of the Lang-Vojta conjecture.

\begin{conj}\label{conj:LVFF}
If $X$ is of general type and $\calL$ is an ample line bundle on $X$, then there exist a proper algebraic subset $Z \subsetneq X$ and a positive constant $\alpha$ such that sections $s: \calC \to \calX$ not contained in $\calZ$ (the model of $Z$) satisfy the inequality $\deg s^*\calL \leq \alpha (2g(\calC)-2)$, with $\calL$ a model of $L$.
\end{conj}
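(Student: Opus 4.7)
The plan is to attack this via the function field version of the Ru--Vojta machinery developed in Section \ref{sec:SMT}, specifically Theorem \ref{thm:ffconst}. Since $X$ is of general type, $K_X$ is big, so by Kodaira's lemma some multiple $mK_X$ decomposes numerically as $A+E$ with $A$ ample and $E$ effective. The first step would be to produce divisors $D_1,\ldots,D_q$, coming from general pluricanonical sections in $|mK_X|$ for $m$ sufficiently large, which are in general position and whose sum satisfies the numerical positivity condition required by Ru--Vojta. As a preliminary reduction one assumes $X$ is non-isotrivial (otherwise sections essentially factor through fibers and the statement requires rephrasing), and passes to a resolution where the chosen divisor configuration is simple normal crossing.

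Next I would apply the function field Ru--Vojta theorem (Theorem \ref{thm:ffconst}) to $D=D_1+\cdots+D_q$ and the ample line bundle $\calL$. The output should be an inequality of the form
\[
h_{\calL}(s)\;\leq\;\alpha\bigl(2g(\calC)-2+N_s^{[1]}(D)\bigr)
\]
for sections $s\colon\calC\to\calX$ not contained in some proper closed subset $Z_0\subsetneq X$. To upgrade this into the desired Vojta-style bound $\deg s^*\calL\leq\alpha(2g(\calC)-2)$ one has to absorb the truncated counting term $N_s^{[1]}(D)$ into the Euler characteristic, possibly at the cost of enlarging the exceptional set. A natural mechanism is to exploit that $D$ lies in a pluricanonical system: contact of $s$ with $D$ forces ramification of $s$ viewed as an orbifold morphism into $(X,D)$, and a Riemann--Hurwitz style argument should trade this ramification for genus, reproducing the pure $2g(\calC)-2$ bound.

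The main obstacle is twofold. First, ensuring the numerical positivity hypothesis of Ru--Vojta using only pluricanonical divisors is delicate: even in the surface case treated in Theorem \ref{th:CZ_nev_intro} one needs a careful Bertini-style choice to obtain enough components with controlled intersection numbers. Second, and more seriously, controlling the exceptional set $Z$ independently of the chosen $D_i$: the Ru--Vojta output produces an exceptional subset that a priori depends on the configuration, and one must show that these subsets stabilize to a single proper closed $Z\subsetneq X$. This is where the Campana core $c_X\colon X\to C(X)$ is expected to enter: the correct $Z$ should contain the loci where sections can deform along special directions. Proving that no sequence of sections with unbounded height escapes such a canonical $Z$ appears to lie beyond the current Ru--Vojta framework, which is why the function field Lang--Vojta height conjecture remains open in the generality stated here, even though the analogous arithmetic statement is known in special cases (Manin--Grauert for curves, Noguchi for ample cotangent, Bogomolov--McQuillan for surfaces with $c_1^2>c_2$).
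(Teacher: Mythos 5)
The statement you were given is not a theorem but Conjecture \ref{conj:LVFF}, the function-field Lang--Vojta height conjecture, which the paper states as an open problem and does not prove. The surrounding text only cites known special cases (Manin and Grauert for curves, Noguchi for ample cotangent bundle, Yamanoi for isotrivial subvarieties of abelian varieties, and various logarithmic cases in dimension two). There is therefore no paper proof to compare against, and your writeup is a sketch of a possible attack rather than a proof.

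That said, your sketch correctly identifies both the natural strategy and the reasons it does not close. You are right that (a) the numerical positivity hypotheses feeding into the function-field Ru--Vojta statement (Theorem \ref{thm:ffconst}) are not automatic for a configuration of pluricanonical divisors on an arbitrary general-type variety, and (b) the exceptional set produced by Ru--Vojta depends on the chosen $D_1,\dots,D_q$ and there is no known argument that these sets stabilize to a single proper closed $Z\subsetneq X$. The paper sidesteps both issues by working only with a restricted class of surfaces: in Theorem \ref{th:CZ_ff} the $\beta$-inequalities are verified by hand (Lemma \ref{lemma:pi}), and the exceptional set is identified with the one coming from Levin's large-divisor theorem, which is intrinsic to the pair $(X,D)$. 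One small inaccuracy in your sketch: the Riemann--Hurwitz mechanism you invoke to absorb the truncated counting term $N_s^{[1]}(D)$ into $2g(\calC)-2$ does not really occur in the paper's arguments; in the function-field case the paper keeps the term $N^{[1]}$ (this is precisely the pseudo-algebraic-hyperbolicity bound of Definition \ref{def:alghyp}), and it is the orbifold formalism (Lemma \ref{lemma:orbif_ff}) rather than a Riemann--Hurwitz trade that removes it when orbifold multiplicities are large. So the paper never actually produces the clean bound $\deg s^*\calL\leq\alpha(2g(\calC)-2)$ of Conjecture \ref{conj:LVFF}, even in the cases it handles.
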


The isotrivial case of Conjecture \ref{conj:LVFF} is known for subvarieties of abelian varieties \cite{Yamanoi}. For certain cases of Conjecture \ref{conj:LVFF} in the logarithmic setting see \cite{CZConic, CZGm, Tur, advt, CaTur}.

One should notice that for function fields, the height bound predicted by Conjecture \ref{conj:LVFF} can only imply a non-density result when the variety is not isotrivial.
	We note that in the isotrivial case, one can expect that a natural generalization of Conjecture \ref{conj:LVFF} predicts the following
	
	\begin{conj}\label{conj:main_orb}
	  If $(X,\Delta)$ is an orbifold of general type then $(X,\Delta)$ is pseudo algebraically hyperbolic as in Definition \ref{def:ah_orb}. 
	\end{conj}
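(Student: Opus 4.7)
\smallskip

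\noindent\textbf{Plan.} The natural strategy is the function-field orbifold analog of the Bloch--Green--Griffiths--Lang scheme: produce global orbifold jet differentials vanishing along an ample divisor, then extract the height bound from a tautological inequality for orbifold morphisms from curves.

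\smallskip

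\noindent\textbf{Step 1 (orbifold jet differentials).} Starting from the bigness of $K_X+\Delta$, the aim is to produce, for $k$ and $m$ large enough, a nontrivial section
\[
\omega \in H^0\bigl(X, E^{GG}_{k,m}\Omega_{(X,\Delta)} \otimes \calA^{-1}\bigr)
\]
for some ample line bundle $\calA$, following the orbifold Green--Griffiths--Lang programme as developed by Campana--Paun and Rousseau. The common zero locus of such sections, together with the augmented base locus of $K_X+\Delta$, cuts out the proper Zariski closed subvariety $Z \subsetneq X$ that will serve as the exceptional set for pseudo algebraic hyperbolicity.

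\smallskip

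\noindent\textbf{Step 2 (tautological inequality).} Given an orbifold morphism $\psi\colon(\calC,\Delta_\calC)\to(X,\Delta)$ whose image is not contained in $Z$, lift it to the $k$-jet $\psi_{[k]}\colon \calC \to J_k(X,\Delta)$ and pull back $\omega$. The algebro-geometric orbifold tautological inequality, i.e.\ the function-field analog of Demailly's logarithmic derivative lemma with logarithmic contributions weighted by the orbifold multiplicities, is expected to yield
\[
h_{\calO(1)}(\psi_{[k]}) \leq k\bigl(2g(\calC)-2+\deg \Delta_\calC\bigr) + O(1).
\]
Comparing $\calO(1)$ with $\pi_k^{*}(K_X+\Delta)$ via the section $\omega$, and using that $\calL$ is dominated by a multiple of $K_X+\Delta$ on a nonempty open subset, one obtains $\deg \psi^{*}\calL \leq \alpha\bigl(2g(\calC)-2+\deg \Delta_\calC\bigr)$ after possibly enlarging $Z$ to absorb lower-order error terms.

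\smallskip

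\noindent\textbf{Main obstacle.} Step~1 is the heart of the matter and is precisely what keeps Conjecture~\ref{conj:main_orb} open in full generality: bigness of $K_X+\Delta$ only controls the symmetric part of the jet space, and transferring this positivity to the full orbifold Green--Griffiths bundle is delicate because of the fractional coefficients of $\Delta$ and the compatibility with the ramification condition on $\Delta_\calC$. The present paper avoids this obstacle in the surface case, when $\Delta$ has enough components, by replacing the jet-differential input with the Ru--Vojta framework: see Theorem~\ref{th:CZ_nev_intro} and its function-field truncated counterpart Theorem~\ref{thm:ffconst}. A plausible path to the full conjecture would therefore be either to extend these Ru--Vojta methods beyond surfaces, or to combine them with partial positivity statements for orbifold jet bundles.
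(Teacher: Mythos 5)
The item you were handed is stated in the paper as a \emph{conjecture}, not a theorem: the paper does not prove Conjecture~\ref{conj:main_orb} and offers no argument for it in general. What the paper does prove is a particular instance of it for a class of surfaces with several boundary components satisfying explicit intersection-theoretic hypotheses (Theorem~\ref{th:CZ_ff} combined with Lemma~\ref{lemma:orbif_ff}, yielding Corollary~\ref{cor:CZ_orb}), and it does so by an entirely different route: the truncated function-field Ru--Vojta inequality of Theorem~\ref{thm:ffconst} together with the $\beta$-constant bookkeeping of Lemma~\ref{lemma:pi}. You correctly recognize the conjectural status and do not pretend to close the argument, which is the right judgment here.

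Your sketch (orbifold Green--Griffiths jet differentials vanishing along an ample class, then an orbifold tautological inequality for $\psi_{[k]}$) is a legitimate \emph{alternative} programme rather than the one the paper pursues, and your diagnosis of the obstruction is accurate: bigness of $K_X+\Delta$ does not by itself furnish sections of $E^{GG}_{k,m}\Omega_{(X,\Delta)}\otimes\calA^{-1}$, and the fractional multiplicities create exactly the compatibility problems you name. One small caution on Step~2: the orbifold tautological inequality in the algebraic/function-field setting typically produces a term of the form $2g(\calC)-2+N^{[1]}_{\psi}(\lceil\Delta\rceil)$ (the truncated counting of the boundary), not directly $\deg\Delta_\calC$; converting between the two is precisely the content of Lemma~\ref{lemma:orbif_ff} in the paper, and it is not free --- it requires the multiplicities $m_i$ to be large and the underlying log pair to already be pseudo algebraically hyperbolic. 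So even granting Step~1, your Step~2 would need the same large-multiplicity hypothesis the paper imposes, which is why the paper's Corollary~\ref{cor:CZ_orb} carries a threshold $m$ rather than working for all orbifolds of general type. Your closing remark that a full proof would require either extending Ru--Vojta beyond surfaces or establishing genuine positivity of the orbifold jet bundles matches the state of the art.
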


Motivated by Campana's core construction, as introduced above, we propose the following extension of Conjecture \ref{conj:LVFF} to nonspecial varieties.

\begin{conj}\label{conj:abff}
	If $X$ is not special, then there are a rational dominant map $\pi : X \to Y$ with $\dim Y >0$, a proper algebraic subset $Z \subsetneq \PP(T_X)$, a positive constant $\alpha$ and an ample line bundle $L$ on $Y$ such that sections $s: \calC \to \calX$, whose liftings $s_1: \calC \to \PP(T_\calX)$ are not contained in $\calZ$, satisfy the inequality $\deg (\pi \circ s)^*\calL \leq \alpha (2g(\calC)-2).$
\end{conj}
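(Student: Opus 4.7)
The natural strategy is to reduce to the orbifold general-type case by means of Campana's core construction. Since $X$ is non-special, the core map $c_X : X \to C(X)$ has base of positive dimension and $(C(X), \Delta_{c_X})$ is an orbifold of general type; set $\pi := c_X$ and $Y := C(X)$, which supplies the dominant rational map required by the statement. Granting the orbifold Lang--Vojta height inequality for bases of general type (Conjecture \ref{conj:main_orb}), $(Y, \Delta_\pi)$ is pseudo algebraically hyperbolic in the sense of Definition \ref{def:ah_orb}: there exist an ample line bundle $\calL$ on $Y$, a constant $\alpha_0 > 0$, and a proper Zariski-closed subset $Z_0 \subsetneq Y$ so that every orbifold morphism $\psi : (\calC, \Delta_\calC) \to (Y, \Delta_\pi)$ with $\psi(\calC) \not\subset Z_0$ satisfies
\[
	\deg \psi^* \calL \leq \alpha_0 \bigl(2 g(\calC) - 2 + \deg \Delta_\calC\bigr).
\]

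Given a section $s : \calC \to \calX$ of the proper model of $X$, composition with a resolution of $\pi$ produces a section $\pi \circ s : \calC \to \calY$ of a model $\calY$ of $Y$. The ramification of $\pi \circ s$ over the multiple fibres of $\pi$ determines the minimal orbifold divisor $\Delta_\calC^{(s)}$ on $\calC$ making $\pi \circ s$ into an orbifold morphism to $(Y, \Delta_\pi)$, and the inequality above yields
\[
	\deg(\pi \circ s)^* \calL \leq \alpha_0 \bigl(2 g(\calC) - 2 + \deg \Delta_\calC^{(s)}\bigr).
\]
To obtain the cleaner inequality asserted in the conjecture, one must absorb $\deg \Delta_\calC^{(s)}$. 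The key observation is that $\Delta_\calC^{(s)}$ is nontrivial precisely when $s$ meets a multiple fibre of $\pi$ with small ramification order, and this behaviour is detected by the tangent direction of $s$ at the intersection point. Letting $Z \subset \PP(T_X)$ be the union of the pullback of $Z_0$ to $\PP(T_X)$ along $\PP(T_X) \to X \to Y$ and the Zariski closure of the tangent directions to the multiple fibres of $\pi$ produces a proper algebraic subset; for sections whose lift $s_1 : \calC \to \PP(T_\calX)$ avoids the corresponding model $\calZ$, a Riemann--Hurwitz style argument bounds $\deg \Delta_\calC^{(s)}$ linearly in $2 g(\calC) - 2$, and enlarging $\alpha_0$ gives the desired bound.

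The main obstacle is precisely this last step. When $Y$ is one-dimensional, Riemann--Hurwitz controls $\deg \Delta_\calC^{(s)}$ directly and the argument goes through. In higher dimension, however, $\deg \Delta_\calC^{(s)}$ is not dominated by $2 g(\calC) - 2$ in any general way, and separating out the problematic sections by conditions living in $\PP(T_X)$ rather than in $X$ requires a genuinely new orbifold height inequality along fibrations that incorporates tangential data. This is the technical heart of the statement and is not accessible with the tools currently available; the reduction does succeed unconditionally in a few restricted settings, for example when $Y$ embeds into an abelian variety by work of Yamanoi \cite{Yamanoi}, and it is consistent with the explicit results established later in the paper for the family $X_m$.
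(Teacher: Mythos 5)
This statement is Conjecture~\ref{conj:abff}, which the paper \emph{states} but does not prove; it is one of the generalized Lang--Vojta conjectures that Section~\ref{conj} proposes as a framework, and the only evidence offered elsewhere in the paper is for specific families (the threefolds $X_m$ in Sections~\ref{sec:vojta}--\ref{EC}). So there is no paper proof for your sketch to be measured against, and you should not present the statement as provable with current technology.

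That said, your analysis is sound as a discussion of \emph{why} the conjecture takes the form it does, and it is honest about where the argument breaks. Taking $\pi = c_X$ and $Y = C(X)$ via Campana's core is exactly the intended reduction — this is precisely the motivation the authors give in Section~\ref{conj} for placing the exceptional set inside $\PP(T_X)$ rather than $X$ — and invoking the orbifold pseudo-algebraic-hyperbolicity of $(Y,\Delta_{c_X})$ (Conjecture~\ref{conj:main_orb}) is the natural conditional input. You then correctly identify the genuine gap: the inequality produced by the orbifold hypothesis carries a $\deg\Delta_\calC^{(s)}$ term that is \emph{not} controlled by $2g(\calC)-2$ in general once $\dim Y \ge 2$, and excising the problematic sections requires an exceptional set that sees first-order (tangential) data, which is exactly what forces $Z$ to live in $\PP(T_X)$. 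This absorption step is the unresolved heart of the conjecture, and your assessment that it is out of reach is consistent with the paper's stance.

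One technical caveat worth recording: Conjecture~\ref{conj:main_orb} is stated in the isotrivial case, whereas \ref{conj:abff} is the nonspecial analogue of \ref{conj:LVFF}, whose content is non-isotrivial. If $X$ is non-isotrivial, the core fibration need not be isotrivial and the input you cite is not quite the right conditional hypothesis; one would instead need a non-isotrivial orbifold height inequality for $(Y,\Delta_{c_X})$, which the paper does not even formulate separately. In the isotrivial/log case the relevant target is rather Conjecture~\ref{conj:main_ff}. So the chain of reductions you describe is the right picture, but the precise conditional statement you would need to invoke depends on isotriviality in a way your sketch elides.
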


Finally, in the isotrivial case, a natural generalization of Conjecture \ref{conj:abff} to the logarithmic case predicts the following

\begin{conj}\label{conj:main_ff}
	If $(X,D)$ is not special, then there are a rational dominant map $\pi : (X,D) \to (Y,D') $ with $\dim Y >0$, a proper algebraic subset $Z \subsetneq \PP(T_X)$, a positive constant $\alpha$ and an ample line bundle $L$ on $Y$ such that sections $s: \calC \to X$, whose liftings $s_1: \calC \to \PP(T_X)$ are not contained in $Z$, satisfy the inequality $\deg (\pi \circ s)^*\calL \leq \alpha (2g(\calC)-2 + N_{\pi \circ s}^{[1]}(D')).$
\end{conj}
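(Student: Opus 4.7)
The plan is to deduce Conjecture~\ref{conj:main_ff} from Conjecture~\ref{conj:main_orb} by means of the orbifold analogue of Campana's core construction. Since $(X,D)$ is not special, Campana's theory produces a functorial orbifold fibration $\pi := c_{(X,D)}: (X,D) \dasharrow (Y,D')$, where $(Y,D')$ is an orbifold of general type with $\dim Y > 0$ and whose general fibers are special. After passing to suitable birational models, we may assume $\pi$ is a morphism and, by the very construction of the orbifold base in Definition~\ref{dob}, that $\pi:(X,D)\to(Y,D')$ is an orbifold morphism in the sense recalled in Section~\ref{special}.

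Given a section $s:\calC\to X$, endow $\calC$ with the orbifold structure $\Delta_\calC$ pulled back from $(X,D)$ by $s$. The composition $\pi\circ s$ is then automatically an orbifold morphism $(\calC,\Delta_\calC)\to(Y,D')$: this is forced by the defining formula $m_{(\pi,D)}(E)=\inf_k\{t_k\cdot m_D(F_k)\}$ of Definition~\ref{dob}, which ensures that the ramification of $\pi\circ s$ over each component $E\subset \Supp(D')$ respects the prescribed orbifold multiplicity. A direct bookkeeping gives $\deg\Delta_\calC\leq N_{\pi\circ s}^{[1]}(D')+(2g(\calC)-2)$ up to a constant, so the inequality \eqref{eq:algorb} predicted by Conjecture~\ref{conj:main_orb} for the general-type orbifold $(Y,D')$ yields the desired bound
\[
\deg(\pi\circ s)^*L\leq\alpha\bigl(2g(\calC)-2+N_{\pi\circ s}^{[1]}(D')\bigr)
\]
for every section whose composition with $\pi$ has image not contained in the pseudo-exceptional locus $W\subsetneq Y$ produced by Conjecture~\ref{conj:main_orb}.

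To produce the proper subset $Z\subsetneq\PP(T_X)$, observe that two kinds of sections are not covered by the above inequality: those whose image lies in a fiber of $\pi$ (for which $\pi\circ s$ is constant), and those with $\pi(s(\calC))\subset W$. The lifts $s_1$ of the first kind take values in the Zariski closure of the relative projectivized tangent bundle $\PP(T_{X/Y})\subset\PP(T_X)$; the lifts of the second kind take values in $p^{-1}\bigl(\pi^{-1}(W)\bigr)$, where $p:\PP(T_X)\to X$ is the structure map. We then take $Z$ to be the union of these two closed subsets. Since $\dim Y>0$ and $W\subsetneq Y$, both components are proper in $\PP(T_X)$, and hence so is $Z$. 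Sections whose lift $s_1$ avoids $Z$ are by construction those for which $\pi\circ s$ is non-constant with image disjoint from $W$, so the bound above applies.

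The main obstacle is that the argument is entirely conditional on Conjecture~\ref{conj:main_orb}, which is itself open; partial input is exactly what Theorem~\ref{th:CZ_nev_intro} provides, namely pseudo algebraic hyperbolicity for orbifold surfaces with sufficiently positive boundary, and extending this to the higher-dimensional orbifold bases that arise from the core is the essential remaining difficulty. A secondary point is ensuring that the orbifold core is realised as an honest morphism with the correct multiplicities on the base, so that $\pi\circ s$ is a genuine orbifold morphism: in practice this may require passing to an \'etale cover of $\calC$ and modifying the orbifold base accordingly, in the spirit of an orbifold Chevalley--Weil statement. Once these two inputs are available, the reduction above makes Conjecture~\ref{conj:main_ff} follow immediately.
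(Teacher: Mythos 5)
The statement you have been asked to prove is Conjecture~\ref{conj:main_ff}, which is precisely that: a conjecture. The paper does not offer a proof of it, and indeed explicitly presents it (together with Conjectures~\ref{conj:main_orb}, \ref{conj:abff}, etc.) as part of a conjectural framework extending the Lang--Vojta circle of ideas to nonspecial varieties in the sense of Campana. There is therefore no ``paper's own proof'' to compare against; the best the paper does is verify special cases of the related Conjecture~\ref{conj:main_orb} for certain orbifold surfaces (Theorem~\ref{th:CZ_ff}, Corollary~\ref{cor:CZ_orb}) and apply these to the threefolds $X_m$ in Section~\ref{EC}.

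Your proposal is not a proof of the statement; it is a conditional reduction to Conjecture~\ref{conj:main_orb}, which is itself open, and you are candid about this in your final paragraph. The reduction you sketch --- take $\pi$ to be the orbifold core $c_{(X,D)}$, pull back the orbifold structure to $\calC$ via $s$, invoke pseudo algebraic hyperbolicity for the orbifold base of general type, and assemble $Z$ from $\PP(T_{X/Y})$ together with the inverse image of the pseudo-exceptional locus --- is a sensible heuristic and is in line with the motivation the paper gives (the core construction is explicitly invoked in Section~\ref{conj} as the reason exceptional sets should live in $\PP(T_X)$). But several steps are not actually justified in the paper and are genuinely open: (i) that the orbifold core can be represented on suitable models by an honest orbifold morphism, rather than just a rational fibration with orbifold base; (ii) that $\pi\circ s$ is an orbifold morphism to $(Y,D')$ for an arbitrary section $s$ once $\calC$ carries the pulled-back orbifold structure --- the orbifold base formula in Definition~\ref{dob} is computed from the generic fibers of $\pi$ and does not automatically control ramification over the $\pi$-exceptional locus, which is why you yourself flag the need for an orbifold Chevalley--Weil type argument; and (iii), most fundamentally, Conjecture~\ref{conj:main_orb} itself, which is the entire engine of the bound. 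Also, the bookkeeping inequality you state, $\deg\Delta_\calC\leq N_{\pi\circ s}^{[1]}(D')+(2g(\calC)-2)$, is phrased in a misleading way: the relevant and correct estimate is simply $\deg\Delta_\calC\leq N^{[1]}_{\pi\circ s}(D')$, from which $2g(\calC)-2+\deg\Delta_\calC\leq 2g(\calC)-2+N^{[1]}_{\pi\circ s}(D')$ follows; the genus term does not bound the orbifold degree. In short: the reduction you describe is plausible and agrees in spirit with the paper's conjectural picture, but the statement remains a conjecture, and what you have written is a conditional heuristic, not a proof.
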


\section{Special vs weakly special}\label{wspecial}
In this section we construct a family of examples of varieties that are weakly special but are not special in the sense of Definition \ref{def:special}.

\subsection{Weakly special varieties}
We first recall the definition of \emph{weakly special} variety (the terminology is due to Campana).
\begin{definition}
  A smooth projective variety \(X\) over a field $k$ is \textsl{weakly special} if for every finite étale morphism \(u\colon X'\to X_k\) the variety $X'$ does not admit a dominant rational map \(f'\colon X'\to Z'\) to a positive dimensional variety $Z'$ of general type. A projective variety $X$ is weakly special if some (hence any) desingularization is weakly special.
\end{definition}

The Weak Specialness Conjecture, whose original idea is linked to Abramovich and Colliot-Th\'el\`ene in \cite{HT}, predicts that weak specialness characterizes potential density of rational points for varieties defined over number fields.

\begin{conj}[{{see \cite[Conjecture 1.2]{HT}}}]
  \label{ACT}	
  Let $X$ be a projective variety defined over a number field.
Then, the set of rational points on $X$ is potentially dense if and only if $X$ is weakly special.
\end{conj}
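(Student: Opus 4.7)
The plan splits according to the two implications of the biconditional. For the easy direction (potential density implies weak specialness), I would argue by contrapositive. Suppose $X$ is not weakly special, so after a finite extension $L/k$ there exists a finite étale cover $u : X' \to X_L$ together with a dominant rational map $f' : X' \to Z'$ with $Z'$ of positive dimension and of general type. By the Chevalley--Weil theorem, any set of $L$-rational points of $X$ lifts, after a further finite extension $L'/L$, to $L'$-rational points of $X'$, so potential density on $X$ transfers to $X'$. Pushing forward through $f'$ would then produce a Zariski dense set of rational points on $Z'$ after a further extension, contradicting the Bombieri--Lang conjecture for varieties of general type. Thus this direction is a clean reduction: Chevalley--Weil plus Bombieri--Lang, no new ideas required.

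For the hard direction (weak specialness implies potential density), my plan would be to attempt an induction on $\dim X$ guided by Campana's core fibration $c_X : X \to C(X)$. The steps would be:
\begin{itemize}
  \item exploit the absence of étale covers dominating varieties of general type to control the arithmetic of $C(X)$, ideally reducing it to a situation of known potential density;
  \item establish Zariski density of rational points on the generic fiber of $c_X$, which is special, by invoking the arithmetic content of Campana's program in lower dimension;
  \item glue the fiberwise density to a Zariski dense set of rational points on $X$ via a Hilbert irreducibility or specialization argument, possibly after a further finite extension.
\end{itemize}

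The main obstacle, and the reason the conjecture remains open, is that each of these steps is essentially out of reach: no general technique exists to \emph{produce} dense rational points on special varieties (density is unknown already for simply connected Calabi--Yau threefolds), and weak specialness is strictly weaker than specialness as demonstrated in Section~\ref{wspecial}. More seriously, the function-field and analytic evidence collected in Theorem~\ref{thm:main} shows that the direct analogue of Conjecture~\ref{ACT} \emph{fails} for the threefolds $X_m$ constructed in this paper: weak specialness is compatible with algebraic degeneracy of all entire curves and with pseudo algebraic hyperbolicity of the orbifold base. Under the usual number-fields versus entire-curves dictionary this strongly suggests that the implication (weakly special $\Rightarrow$ potentially dense) cannot be proved as stated, and that any successful approach should replace ``weakly special'' by Campana's stronger ``special''. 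Consequently I would view the plan above less as a genuine route to a proof and more as a way of locating precisely where the conjecture is expected to break down.
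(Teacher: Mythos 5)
This statement is a conjecture — Conjecture~\ref{ACT}, attributed to Abramovich and Colliot-Th\'el\`ene via~\cite{HT} — and the paper explicitly does \emph{not} prove it; immediately after stating it the paper notes ``Conjecture~\ref{ACT} is still open.'' So there is no ``paper's own proof'' to compare against, and a correct response to the prompt is precisely what you gave: an analysis rather than a proof.

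Your analysis is accurate on the key points. The ``potential density $\Rightarrow$ weakly special'' direction does reduce, via Chevalley--Weil and pushforward along $f'$, to the Bombieri--Lang conjecture for $Z'$ — but you should make explicit that this makes even the ``easy'' implication conditional, since Bombieri--Lang is itself open in dimension $\geq 2$. The converse is wide open, and you correctly identify the two structural obstacles: there is no general mechanism for producing dense rational points on special varieties (density is unknown even for many simply connected Calabi--Yau threefolds), and the fiber/base gluing step has no unconditional tool behind it. Most importantly, you correctly read the paper's own contribution: Theorem~\ref{thm:main} and Theorem~\ref{deg} show that the function-field and entire-curve analogues of Conjecture~\ref{ACT} \emph{fail} for the threefolds $X_m$, which under the standard Lang--Vojta dictionary is evidence that the arithmetic conjecture as stated, with ``weakly special'' rather than Campana's ``special,'' is likely false. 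Your closing assessment matches the paper's stance exactly.

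One small precision worth adding: the paper also records (Section~\ref{wspecial}) that there is currently \emph{no known example} of a weakly special variety whose rational points are provably not potentially dense, so the arithmetic statement remains formally undecided — the $X_m$ construction gives counterexamples only to the function-field and analytic analogues, not to Conjecture~\ref{ACT} itself.
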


Conjecture \ref{ACT} is still open: in fact there is no example of a weakly special variety whose rational points are not potentially dense. The following conjecture is the analogue of Conjecture \ref{ACT} for entire curves and function fields.

\begin{conj}\label{ACT-ff}
  Let $X$ be a projective variety defined over $\C$. Then, $X$ is weakly special if and only if:
  \begin{enumerate}
    \item there exists an entire curve $\C \to X$ with Zariski dense image.
    \item there are no dominant map $\pi: X \to Y$, an ample line bundle $\calL$, a positive constant $\alpha$ and a proper closed subset $Y_\text{exc}$ such that for every smooth integral curve $\calC$ and morphism $s: \calC \to X$ such that $s(\calC)$ is not contained in $Y_{\text{exc}}$ the following holds:
      \[
	\deg (\pi \circ s)^* \calL \leq \alpha \left( 2g(\calC) - 2 \right).
      \]
  \end{enumerate}
\end{conj}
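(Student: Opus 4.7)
The statement is a biconditional; I would address the two implications separately, noting that I do not expect to prove it outright, since the remainder of the paper seems designed to produce counterexamples to one of the directions. For the implication (1) and (2) $\Rightarrow$ weakly special, I would argue by contraposition. Suppose $X$ is not weakly special: then by definition there exist a finite \'etale cover $u\colon X'\to X$ and a dominant rational map $f\colon X'\to Z$ to a positive-dimensional variety of general type. The Green--Griffiths--Lang conjecture applied to $Z$ predicts that entire curves in $Z$ are algebraically degenerate, while the function field Lang--Vojta height conjecture (Conjecture \ref{conj:LVFF}) predicts an inequality $\deg s^*\calL_Z \leq \alpha(2g(\calC)-2)$ for sections outside an exceptional set.

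To transfer these statements to $X$ I would use that $\C$ is simply connected, so any entire curve $\gamma\colon\C\to X$ lifts through $u$ to $\tilde\gamma\colon\C\to X'$; algebraic degeneracy of $f\circ\tilde\gamma$ in $Z$ together with finiteness of $u$ yields algebraic degeneracy of $\gamma$, contradicting (1). For the height bound, a finite \'etale base change $\calC'\to\calC$ lifts sections $s\colon\calC\to\calX$ to sections $s'\colon\calC'\to\calX'$; composing with $f$ and applying Riemann--Hurwitz to relate $2g(\calC')-2$ to $2g(\calC)-2$ produces a height bound for a rational map $\pi\colon X\dashrightarrow Z$ contradicting (2). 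This implication is thus conditional only on the standard Lang--Vojta picture for varieties of general type.

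The reverse implication, weakly special $\Rightarrow$ (1) and (2), is the direct analytic and function field analogue of the Weak Specialness Conjecture \ref{ACT}. An attempt would need to produce a Zariski-dense entire curve on any weakly special variety and exclude every dominant rational map admitting a height bound on sections. The main obstacle, and the central point of the paper, is that this direction is in fact \emph{false}: the threefolds $X_m$ constructed in Section \ref{EC} are weakly special, yet by Theorem \ref{thm:main} every entire curve in $X_m$ is algebraically degenerate and the base $(\overline{S},\Delta_\pi)$ of the natural elliptic fibration is pseudo algebraically hyperbolic, producing a dominant rational map that violates (2). Hence no proof of this direction can exist as stated. The role of Conjecture \ref{ACT-ff} in the paper appears to be precisely to frame this failure, lending further support to Campana's thesis that specialness, rather than weak specialness, is the correct geometric condition controlling the qualitative behavior of entire curves and sections over function fields.
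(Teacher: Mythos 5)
This statement is a \emph{conjecture}, not a theorem, so the paper offers no proof of it to compare against; Conjecture~\ref{ACT-ff} is introduced as the analytic/function-field analogue of the Weak Specialness Conjecture~\ref{ACT}, and the entire purpose of Sections~\ref{sec:vojta} and~\ref{EC} is to construct weakly special threefolds $X_m$ that falsify one of its directions. You have correctly recognized this, and your analysis of both implications is sound: the direction ``(1) and (2) $\Rightarrow$ weakly special'' is indeed the direction that would be conditional on Green--Griffiths--Lang and on the function-field Lang--Vojta conjecture (and your lifting argument through the étale cover $u$, using that $\mathbb C$ is simply connected and that $f^{-1}(W)$ is a proper closed subset for $f$ dominant, is the standard and correct way to descend degeneracy from $X'$ to $X$). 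Conversely, the direction ``weakly special $\Rightarrow$ (1) and (2)'' is what Theorem~\ref{thm:main} and Theorem~\ref{deg} refute. The only point worth sharpening in your write-up is that the paper explicitly disproves the \emph{logarithmic analogue} of the conjecture: Theorem~\ref{deg}(1) yields a height bound of the form $\deg \pi(\varphi(\calC)) \leq A(2g(\calC)-2 + N^{[1]}_{\pi\circ\varphi}(\widetilde D_2+\widetilde D_3+\widetilde D_4))$, which carries a truncated counting term rather than the bare $2g(\calC)-2$ appearing in condition (2) as stated; the clean projective form of (2) is refuted only up to this logarithmic correction. Other than that distinction, your reading of the conjecture's role in the paper is accurate.
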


As seen in Section \ref{special}, if a variety $X$ is special then $X$ is weakly special. However, the two notions are equivalent only for curves and surfaces. We construct here examples of \(3\)-dimensional projective varieties which are weakly special, but not special. In Section \ref{EC} we will show that these examples contradict Conjecture \ref{ACT-ff}.

\subsection{Examples of weakly special, but not special threefolds}
The construction below is a slight extension of the construction in \cite{BT} and was explained to us by F. Campana. We shall construct simply-connected smooth projective threefolds \(X\) having no rational fibrations onto varieties of general type, but having equidimensional (elliptic) fibrations of (orbifold) general type \(F\colon X\to S\) onto smooth surfaces \(S\).
These are thus examples of weakly special but not special varieties in the lowest dimension where the two notions do not agree.
\begin{theorem}
  \label{tbt} 
Let $m$ be a positive integer, and let $T,S$ be two surfaces together with fibrations \(f: T \to \PP^1\) and \(g: S \to \PP^1\) such that:
  \begin{enumerate}
    \item
      $T$ is a smooth surface and the fibration \(f\colon T\to\PP^{1}\) has a single multiple fiber \(f^{-1}(0)\eqqcolon m\cdot T_0\), with \(T_0\) a smooth elliptic curve, and another (singular) simply-connected fiber;
    \item
      $S$ is a smooth surface and the fibration \(g\colon S\to\PP^{1}\) has a smooth fiber \(S_{0}\bydef g^{-1}(0)\) such that:
      \begin{enumerate}
        \item
          the surface \(S\) is not of general type
          but
          the orbifold surface 
          \[
            (S,\Delta)
            \bydef
            (S,(1-1/m)\cdot S_{0})
          \]
          is of general type;
        \item
          the complement of \(S_{0}\) in \(S\) is simply connected.
      \end{enumerate}
  \end{enumerate}

  Furthermore, let $X$ be the normalization of the total spaces of the natural (orbifold) elliptic fibrations with equidimensional fibers defined by \(f\) and \(g\), i.e. 
  \[
    X
    \bydef(
    S\times_{\PP^1}T)^\nu
    \stackrel{F}{\longrightarrow}
    (S,\Delta).
  \]
 Then $X$ is a weakly special projective smooth threefold that is not special. 
\end{theorem}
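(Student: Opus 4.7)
The plan is to verify non-specialness and weak-specialness separately. For non-specialness, I would argue that $F\colon X\to (S,\Delta)$ is itself a fibration of general type. A local computation near a smooth point of $T_0\subset T$ and $S_0\subset S$, using coordinates in which $f$ has the form $w\mapsto w^m$ and $g$ has the form $u\mapsto u$, shows that $S\times_{\PP^1} T$ is cut out by $u=w^m$; this hypersurface is smooth, so $X$ is smooth there, and one reads off $F^*S_0=m\cdot (F^{-1}(S_0))_{\mathrm{red}}$. Since $f$ has only one multiple fiber, no other prime divisor of $S$ contributes to $\Delta_F$, hence the Campana orbifold base of $F$ equals $(S,\Delta)=(S,(1-1/m)S_0)$, which is of general type by assumption. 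Thus $F$ is a fibration of general type and $X$ is non-special.

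For weak-specialness, the key reduction is to prove that $X$ is simply connected, for then $X$ has no nontrivial finite \'etale cover and the problem reduces to ruling out dominant rational maps to positive-dimensional varieties of general type. To compute $\pi_1(X)$, I would first note that $S$ is simply connected, since the surjection $\pi_1(S\setminus S_0)\twoheadrightarrow \pi_1(S)$ together with the hypothesis that $S\setminus S_0$ is simply connected forces $\pi_1(S)=1$. Next, $F$ is a (normalized) elliptic fibration over the simply connected base $S$; a general fiber contributes a $\Z^2$ to $\pi_1(X)$, but over every point of the nonempty divisor $g^{-1}(\infty)\subset S$ the fiber of $F$ is isomorphic to the simply connected fiber $T_\infty$ of $f$, and the vanishing cycles on $T_\infty$ kill this $\Z^2$. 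A Lefschetz/van Kampen argument then gives $\pi_1(X)=1$. I expect this to be the main technical obstacle, since one must carefully handle the interaction between the normalization, the multiple fiber of $f$, and the other degenerations of $F$.

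Assuming $X$ is simply connected, suppose by contradiction that $\phi\colon X\dashrightarrow Y$ is a dominant rational map onto a positive-dimensional smooth projective variety of general type. If $\dim Y=1$, then $Y$ is a curve of genus $\ge 2$; but $\phi$ would factor through $\mathrm{Alb}(X)=0$, which is absurd. If $\dim Y=2$, then a general elliptic fiber of $F$ maps to a point under $\phi$, so $\phi$ factors birationally through a dominant rational map $\tilde\phi\colon S\dashrightarrow Y$, forcing $S$ to be of general type and contradicting the hypothesis. If $\dim Y=3$, then $\phi$ is birational and $\kappa(X)=3$; but Iitaka's subadditivity applied to the elliptic fibration $F$ over the orbifold base $(S,\Delta)$ yields $\kappa(X)\le \kappa(S,\Delta)=2$, again a contradiction. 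Hence no such $Y$ exists and $X$ is weakly special.
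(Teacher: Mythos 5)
Your proposal follows the same overall skeleton as the paper's proof: establish non-specialness by computing the orbifold base of $F$, then reduce weak-specialness to ruling out fibrations onto varieties of general type of each dimension, after first proving $\pi_1(X)=1$. The non-specialness half is essentially identical. In the weak-specialness half, however, there are two points where your argument is weaker or incomplete compared with the paper's.

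For the fundamental group, you correctly note that $\pi_1(S\setminus S_0)=1$ forces $\pi_1(S)=1$, and then appeal to a vanishing-cycle / van Kampen argument on the whole fibration $F\colon X\to S$. The paper's route is cleaner and avoids handling the multiple fiber altogether: set $D\bydef F^{-1}(S_0)$ and observe that $F\colon (X\setminus D)\to (S\setminus S_0)$ has no multiple fibers and has a simply connected fiber (coming from the singular simply connected fiber of $f$); hence $F_*\colon\pi_1(X\setminus D)\to\pi_1(S\setminus S_0)$ is an isomorphism, so $\pi_1(X\setminus D)=1$, and then surjectivity of $\pi_1(X\setminus D)\to\pi_1(X)$ gives the claim. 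Your sketch presumably works, but as you yourself flag, it is the main technical obstacle in your outline, whereas the paper's restriction-to-the-open-part trick dissolves it.

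In the case $\dim Y=2$ you assert, without justification, that a general elliptic fiber of $F$ must map to a point under $\phi$. This is not automatic: a priori the fibers could map onto curves of genus $\le 1$ in $Y$, and ruling this out requires the nontrivial fact that a surface of general type cannot be covered by a family of rational or elliptic curves (one then constructs an actual genus-$\le 1$ fibration from the covering family and applies easy addition). The paper avoids this entirely by invoking Campana's result \cite[Theorem 2.7]{Ca04}: since the fibers of $F$ are special and $h\colon X\to Z$ is a fibration of general type, the map $h$ factors through $F$, giving a dominant map $S\to Z$ and hence $\kappa(S)\ge 2$, a contradiction. You should either cite that theorem or fill in the missing lemma.

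Finally, in the case $\dim Y=3$ you cite ``Iitaka's subadditivity'' to get $\kappa(X)\le 2$; this is the wrong direction. Subadditivity (Iitaka's $C_{n,m}$, or its orbifold version) gives a \emph{lower} bound $\kappa(X)\ge\kappa(S,\Delta)+\kappa(F)$. What you need is the elementary \emph{easy addition} inequality $\kappa(X)\le\kappa(F)+\dim S=0+2=2$, i.e. simply that an elliptically fibered threefold is never of general type, which is what the paper uses.
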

\begin{proof}
  We first notice that the orbifold base of the fibration \(F\) is indeed \(\Delta\):
  by construction, the only multiple fibers lie above \(S_{0}\), and \(F^{\ast}S_{0}=m\cdot(S_{0}\times T_{0})\).
  Since by assumption \((S,\Delta)\) is a surface of general type, it follows immediately that \(X\) is not of special type.

  We now prove that \(X\) is weakly special. A key feature of \(X\) in this direction is simple-connectedness.
  Let \(D\bydef F^{-1}(S_{0})\); then, the fibration \(F\colon(X\setminus D)\to(S\setminus S_{0})\) is a fibration without multiple fibers 
  (since \(f\colon(T\setminus T_0)\to \PP^1\setminus\Set{0}\) has no multiple fibers),
  and with a simply-connected fiber. 
  This implies that \(F_*\colon\pi_1(X\setminus D)\to \pi_1(S\setminus S_{0})\) is an isomorphism. 
  Using our assumption, the group \(\pi_1(X\setminus D)\) is hence trivial.
  This implies that $X$ is simply-connected since the natural map \(\pi_1(X\setminus D)\to \pi_1(X)\) is surjective.

  As a consequence, to prove that \(X\) is weakly special, it suffices to show that no fibration \(h\colon X\to Z\) exists, with \(Z\) of general type and of dimension \(d,\) with \( 0<d\leq 3\), since $X$ does not admit any non-trivial \'etale cover.
  Assume by contradiction that such an \(h\) exists. 
  Then \(d<3\), because \(X\) is not of general type, since it is an elliptic fibration over \(S\); hence \(d=1,2\). 
  Note that, since \(X\) is simply connected, \(Z\) has to be simply connected, thus \(d>1\), since the only simply-connected curve is \(\PP^1\), which is not of general type. 
  We are reduced to the case in which \(d=2\), and \(h\neq F\), since by assumption \(S\) is not of general type. 
  Since, by construction, the fibers of $F$ are special, it follows from \cite[Theorem 2.7]{Ca04} that there exists a map $g: S \to Z$. But since $S$ is not of general type this contradicts the assumption that $Z$ is a surface of general type.
\end{proof}

We now give examples of fibrations $T \to \PP^1$ and $S \to \PP^1$ that satisfy the hypothesis of Theorem \ref{tbt}. In particular the examples show that such fibrations exist \emph{for every} $m \geq 2$ and therefore provide examples of countably many weakly special threefolds $X$ as a corollary of Theorem \ref{tbt}.

\begin{example}[First fibration]\label{ex:1fib}
  In order to construct \(f\colon T\to\PP^{1}\), we consider first a fibration \(f'\colon T'\to\PP^{1}\) of an elliptic surface having no multiple fiber, a smooth fiber \(T'_0\bydef(f')^{-1}(0)\) and a (singular) simply-connected fiber, and moreover such that \(p_g(T')\bydef h^{0,2}(T')=0\). 
  Then \(T'\) is simply connected. Let \(m>1\) be an integer. The Kodaira logarithmic transform \(f\colon T\to\PP^{1}\) of order \(m\) on the fiber \(T'_0\) of \(f'\) is a new elliptic fibration which replaces \(T'_0\) by a smooth multiple fiber \(m\cdot T_0\) of multiplicity \(m\) over \(0\in \PP^1\), leaving the complements \(T'\setminus T'_0\) and \(T\setminus m\cdot T_0\) isomorphic as elliptic fibrations over \(\PP^1\setminus\Set{0}\) (see for example \cite[Section 3]{BT} and for more details \cite[Section 1.6]{FrieMorg}). Moreover \(p_g(T)=p_g(T')=0\), and \(T\) is still simply connected since \(f\) still has a simply connected fiber. This implies that \(b_1(T)=0\) is even and \(T\) is K\"ahler; since \(p_g(T)=0\), we conclude that \(T\) is projective (we refer to \cite[Section 1.6]{FrieMorg} for the details). 
\end{example}

In the original construction of \cite{BT}, \(S\) was chosen so that \(\kappa(S)=1\).
We shall see that a small variation of the construction permits to chose for \((S,\Delta_F)\) a suitable blow-up of any smooth projective surface \(S'\) with \(\kappa(S')\in \{-\infty,0,1\}\) and \(\Delta_F\) the strict transform of a suitable ample orbifold \(\Q\)-divisor on \(S'\).

\begin{example}[Second fibration]\label{ex:2fib}
  In order to construct \(g\colon S\to\PP^{1}\) (given \(m\)), we consider a simply-connected smooth projective surface \(S'\) that is not of general type, together with an ample and smooth divisor \(C'\), which is a member of a pencil \(C'_t\) of divisors on \(S'\) 
  (whose generic member \(C''\) meets \(C'\) transversally at \(C'^2\) distinct points),
  such that the complete linear system \(\mathcal O_{S'}(C')\) is base-point free,
  and such that \(K_{S'}+(1-1/m)\cdot C'\) is a big divisor on \(S'\).

  For example, one can choose \(S'=\PP^2\), together with \(C'\) a smooth curve of degree \(d\geq4\) if \(m\geq 5\), and \(d\geq7\) if \(m\geq 2\).

  Now choose a second generic member \(C''\) of the linear system \(\vert C'\vert\) meeting transversally \(C'\) at \((C')^2\) distinct points. 
  Let \(\sigma\colon S\to S'\) be the blow-up of \(S'\) at these \((C')^2\) points,
  let \(\Delta\) be the strict transform of \(\Delta'\bydef(1-1/m)\cdot C'\) on \(S\),
  and let \(E\) the exceptional divisor of \(\sigma\). 
  Then the divisor
  \begin{align*}
    K_S+\Delta
    =
    (\sigma^*K_{S'}+E)+(\sigma^*\Delta'-(1-1/m)\cdot E)
    =
    \sigma^*(K_{S'}+\Delta')+(1/m)\cdot E
  \end{align*}
  is big since $\sigma$ is birational and all the varieties are projective; this implies that the orbifold surface \((S,\Delta)\) is of general type.

  Let \(k>0\) be the order of divisibility of \([C']\) in \(\Pic(S')\). 
  By (a version of) Lefschetz theorem,  \(\pi_1(S'\setminus C')\) is the cyclic group of order \(k\) generated by a small loop around \(C'\). 
  But blowing-up a point \(a\) on \(C'\) makes this loop become trivial in $\pi_1(E_a\setminus\{a\}) = \pi_1(\C) = 0$, where \(E_{a}\cong \PP^1\) is the exceptional divisor of the blow-up over \(a\), and \(a\in C'\cap C''\subset S'\). 
  This loop thus becomes homotopically trivial in \((S\setminus S_{0})\), which is thus simply-connected.
\end{example}
\begin{remark}
  The smoothness of \(C'\) is not necessary. 
  One may just assume that \(C'\) is nodal and that \(C''\) meets \(C'\) at smooth points of \(C'\).
\end{remark}

\section{Degeneracy results for surfaces}\label{sec:vojta}

In this section we extend results of Corvaja and Zannier in \cite{CZAnnals} for number fields to the orbifold setting, both over function fields and in Nevanlinna Theory. We prove hyperbolicity and degeneracy results for a class of surfaces as a combination of ideas of Corvaja and Zannier together with the recent method introduced by Ru-Vojta in \cite{RV19}.

In particular we recover the function field and analytic analogues of \cite[Main Theorem]{CZAnnals} in the logarithmic setting. We stress that, even if the arithmetic result for orbifold rational points seems at the moment out of reach, our results give evidence for the arithmetic part of Conjecture \ref{conj:Camp}.
Finally, all these results will be applied in Section \ref{EC} to the nonspecial threefold $X_m$ defined in Section \ref{wspecial}.

\subsection{Function Fields}
In this section we let $\kappa$ be an algebraically closed field of characteristic zero and we assume all the varieties defined over $\kappa$. 
The main theorem of this section will be a consequence of the following statement, whose proof is contained in Section \ref{sec:SMT} (see Theorem \ref{thm:ffconst}). We start by recalling the definition of the $\beta$ invariant (we refer to \cite{RV19} for discussion and properties).

\begin{definition}
	Let $X$ be a complete variety, let $\calL$ be a big line bundle on $X$ and let $D$ be a nonzero effective Cartier divisor on $X$. We define
\begin{equation*}
	\beta(\calL, D) = \liminf_{N\to\infty} \frac{\sum_{m\ge 1}h^0(X, \calL^N(-mD))} {Nh^0(X, \calL^{N})}\;.
\end{equation*}
\end{definition}

Using the constant $\beta$ we can reformulate the main theorem of \cite{RV19} in the function field case. 
However, for our applications we need to obtain an explicit dependence of the constants on the Euler characteristic of the curve. This is obtained in the following theorem, which we state in the constant case, by use of the more explicit \cite[Theorem 1]{Wang} instead of the function field analogue of the Schmidt Subspace Theorem \cite[Main Theorem]{Wang}.

 \begin{theorem}\label{th:ffconst_b}
Let $X\subset \PP^m$ be a projective variety over $\kappa$ of dimension $n$, let $D_1,\cdots,D_q$ be effective Cartier divisors intersecting properly on  $X$, and let $\calL$ be a big line sheaf.   Let $\calC$ be a smooth projective curve over $\kappa$, let $S$ be a finite  set of points on $\calC$ and let $K=\kappa(\calC)$ be the function field of $\calC$.  Then for any $\epsilon>0$, there exist constants $c_1$ and $c_2$, independent of the curve $\calC$ and the set $S$, such that for any map $x=[x_0:\hdots:x_m] :\calC\to X$, where $x_i\in K$, outside the augmented base locus of $\calL$ we have 
 either
 \begin{align*}
   \sum_{i=1}^q \beta(\mathcal L, D_i) m_{D_i,S}(x)\le (1+\epsilon) h_{\calL}(x)+c_1\max\{ 1, 2g(\calC)-2+|S| \},
\end{align*}
or the image of $x$  is contained in  a hypersurface (over $\kappa$) in $\PP^m$ of degree at most $c_2$. 
\end{theorem}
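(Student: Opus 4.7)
The plan is to adapt the filtration method of Evertse--Ferretti, refined by Autissier and Ru--Vojta in \cite{RV19}, to the function field setting, substituting the Schmidt subspace theorem by the \emph{explicit} function field subspace theorem of Wang \cite[Theorem 1]{Wang}. The explicitness of the latter -- and in particular the fact that its error term is linear in $2g(\calC) - 2 + |S|$ with a leading constant depending only on the system of linear forms and the degree of approximation -- is crucial to obtain constants $c_1, c_2$ that are independent of the curve $\calC$ and the set $S$.

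Fix $\epsilon > 0$ and choose a (large) integer $N$ depending on $\epsilon$. Set $V_N \bydef H^0(X, \calL^N)$ and $M \bydef \dim V_N$. For each $i = 1, \dots, q$, the decreasing filtration
\[
V_N \supset H^0(X, \calL^N(-D_i)) \supset H^0(X, \calL^N(-2 D_i)) \supset \cdots
\]
admits an adapted basis $\calB_i = \{s_1^{(i)}, \dots, s_M^{(i)}\}$. By definition of $\beta(\calL, D_i)$, the sum of the vanishing orders of these basis elements along $D_i$ is at least $(\beta(\calL, D_i) - \delta) N M$ for any $\delta > 0$ and $N$ large. Because $D_1, \dots, D_q$ intersect properly, at each $P \in \calC$ at most $n = \dim X$ of the $D_i$ contain $x(P)$, and the Evertse--Ferretti--Autissier combinatorial construction mixes the $\calB_i$ at $P$ into a single basis of $V_N$ whose local contribution captures $\sum_i \beta(\calL, D_i) m_{D_i, \{P\}}(x)$ up to an error of order $\delta \cdot h_\calL(x) + O(1)$.

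Summing over $P \in S$ and composing $x$ with the embedding $X \hookrightarrow \PP(V_N) = \PP^{M-1}$ produces a map $\tilde x : \calC \to \PP^{M-1}$ together with a finite collection of linear forms in general position, to which Wang's function field subspace theorem \cite[Theorem 1]{Wang} applies. The conclusion is a dichotomy: either
\[
(1-\delta) \sum_{i=1}^{q} \beta(\calL, D_i) m_{D_i, S}(x) \leq h_\calL(x) + c_1' \max\{1, 2g(\calC) - 2 + |S|\}
\]
with $c_1'$ depending only on $X$, $D_i$, $\calL$, $N$ and $\epsilon$, or $\tilde x(\calC)$ is contained in a finite union of hyperplanes of $\PP^{M-1}$ determined only by the basis data. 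Each exceptional hyperplane pulls back to a section of $\calL^N$ on $X$, giving a hypersurface in $\PP^m$ whose degree is bounded by $N$ times the degree of the map $X \hookrightarrow \PP^m$ (times a fixed combinatorial factor); taking $c_2$ to be the maximum of these degrees produces the second alternative. Choosing $\delta$ so that $(1-\delta)^{-1} \leq 1 + \epsilon$ (after absorbing the $\delta \cdot h_\calL(x)$ error) gives the asserted inequality with a constant $c_1$ depending only on the data.

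The role of the augmented base locus hypothesis is to guarantee that $V_N$ separates points generically on $x(\calC)$, so that heights computed through the embedding in $\PP(V_N)$ agree with $h_\calL(x)$ up to $O(1)$. The main obstacle is careful bookkeeping: confirming that every constant entering the argument -- the filtration-based error, the linear-form configuration, the Wang constant, and the degree of the exceptional hypersurface -- depends only on $(X, D_i, \calL, \epsilon)$ and not on $(\calC, S)$. This is precisely why one must invoke \cite[Theorem 1]{Wang} rather than the general function field subspace theorem \cite[Main Theorem]{Wang}, since only the former supplies an error of the shape $c_1 \max\{1, 2g(\calC) - 2 + |S|\}$ required here.
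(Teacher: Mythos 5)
Your proposal follows the same route as the paper: transplant the Ru--Vojta filtration argument to the function field setting, then replace the Schmidt Subspace Theorem by Wang's explicit second main theorem \cite[Theorem~1]{Wang}, so that the error term is linear in $\max\{1, 2g(\calC)-2+|S|\}$ with coefficients depending only on $(X, D_i, \calL, \epsilon)$, and finally pull back the exceptional hyperplane in $\PP^{M-1}$ (arising from linear degeneracy of $\Phi\circ x$) to get a hypersurface of bounded degree in $\PP^m$. The only small imprecision is that the exceptional hyperplane is \emph{not} determined by the basis data alone -- it depends on the map $x$ through the linear dependence relation among $\phi_1(x),\dots,\phi_M(x)$ -- but since the theorem only asks for a degree bound $c_2$ (bounded by $N\cdot\deg(X\hookrightarrow\PP^m)$ independently of $x$), this does not affect the argument.
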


\begin{remark}\leavevmode
	\begin{enumerate}
		\item In the case where $X$ is nonsingular, the condition on the proper intersection is equivalent to general position. We refer to \cite[Definition 2.1]{RV19} for precise statements and properties.
		\item Even if Theorem \ref{th:ffconst_b} is stated in the split case we note that our proof carries over to the non split case almost verbatim. We focus here only on the split case since it is the one relevant for our applications.
		\item The constants $c_1$ and $c_2$ appearing in Theorem \ref{th:ffconst_b} can be effectively computed given $X,D_i,\calL,\varepsilon$ so in particular the algebraic hyperbolicity bounds can be made effective in the same way.
	\end{enumerate}
\end{remark}

The proof of Theorem \ref{th:ffconst_b} is included in Section \ref{sec:SMT} as Theorem \ref{thm:ffconst}, and we refer to Subsection \ref{subsec:ffRV} for the definitions of all the quantities involved. For our application we will use the following corollary in dimension 2, whose proof is also included in Section \ref{sec:SMT} (see Corollary \ref{cor:surf}).

\begin{corollary}
  \label{cor:surf_b}
  In the previous setting, if $X$ has dimension 2, then for any $\epsilon>0$, there exist constants $c_1$ and $c_3$ independent of the curve $\calC$ and the set $S$ such that for any $K$-point $x=[x_0:\cdots:x_m] :\calC\to X$, with $x_i\in K$, we have that, either $\deg x(\calC) \leq c_3$, or 
 \begin{align*}
   \sum_{i=1}^q \beta(\mathcal L, D_i) m_{D_i,S}(x)\le (1+\epsilon) h_{\mathscr L}(x)+c_1 \max \{ 1, 2g(\calC)-2+|S| \}.
\end{align*}
\end{corollary}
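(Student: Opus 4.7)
The plan is to specialize Theorem~\ref{th:ffconst_b} to the case $n=2$ and convert the ``image in a hypersurface'' alternative into a uniform bound on $\deg x(\calC)$ via B\'ezout's theorem. The constant $c_3$ will be an explicit combination of $\deg X$ in $\PP^m$ with the hypersurface-degree $c_2$ produced by Theorem~\ref{th:ffconst_b}, together with a contribution coming from the augmented base locus of $\calL$, which the hypotheses of Theorem~\ref{th:ffconst_b} require one to exclude but which the corollary does not.

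First I would dispose of the case in which $x(\calC)$ is contained in the augmented base locus $\mathbf{B}_+(\calL)$. This is a proper Zariski closed subset of $X$ depending only on $(X,\calL)$; since $\dim X = 2$, $\mathbf{B}_+(\calL)$ is at most a curve in $\PP^m$, so its degree in $\PP^m$ provides a uniform bound $c_3'$. Next I would handle the generic case: if $x(\calC) \not\subset \mathbf{B}_+(\calL)$, Theorem~\ref{th:ffconst_b} applies and produces constants $c_1, c_2$ independent of $\calC$ and $S$ such that either the desired height inequality holds (giving the first alternative of the corollary), or $x(\calC) \subset H$ for some hypersurface $H \subset \PP^m$ of degree at most $c_2$. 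In the latter case, provided $H$ can be arranged not to contain $X$, the intersection $H \cap X$ is an effective divisor on $X$, that is, a curve in $\PP^m$ of degree at most $\deg X \cdot c_2$ by B\'ezout's theorem in $\PP^m$; since $x(\calC) \subset H \cap X$, we obtain $\deg x(\calC) \leq c_2 \deg X$. Setting $c_3 := \max\{c_3',\, c_2\deg X\}$ then yields the statement.

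The only delicate point, and hence the main obstacle, is verifying that the hypersurface $H$ provided by Theorem~\ref{th:ffconst_b} can indeed be taken not to contain $X$. This I expect to follow from an inspection of the Ru--Vojta style argument underlying Theorem~\ref{thm:ffconst} in Section~\ref{sec:SMT}: the hypersurface arises from a linear form on $\PP(H^0(X,\calL^N))$ pulled back through the Kodaira map associated to $\calL^N$, so it corresponds to a \emph{nontrivial} section of $\calL^N$ on $X$, whose zero locus is therefore a proper divisor on $X$. Consequently the defining polynomial of $H$ in $\PP^m$ does not vanish identically on $X$, and B\'ezout applies as above.
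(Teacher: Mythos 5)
Your proposal is correct and takes essentially the same route as the paper: the paper's proof of Corollary~\ref{cor:surf} also bounds $\deg x(\calC)$ by treating the (fixed, at most one-dimensional) augmented base locus and the bounded-degree hypersurface alternative of Theorem~\ref{thm:ffconst} separately. The ``delicate point'' you flag is already resolved inside the paper's proof of Theorem~\ref{thm:ffconst}, which notes explicitly that since $\phi_1,\dots,\phi_M$ form a basis of $H^0(X,\calL^N)$ the resulting hyperplane cannot contain $\Phi(X)$, matching your justification.
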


We now state the main result of this section in the logarithmic case.

\begin{theorem}
		 \label{th:CZ_ff}
		 Let $X\subset\mathbb P^m$ be a smooth projective surface and $D = D_1 + \dots + D_r$ be a divisor with $r \geq 2$, both defined over $\kappa$, such that
		 \begin{enumerate}
			 \item No three of the components $D_i$ meet at a point;
			 \item There exists a choice of positive integers $p_i$ such that 
				 \begin{itemize}
					 \item the divisor $D_p := p_1 D_1 + p_2 D_2 + \dots + p_r D_r$ is ample;
					 \item The following inequality holds:
						 \[
							 2 D_p^2 \xi_i > (D_p \cdot D_i) \xi_i^2 + 3 D_p^2 p_i,
						 \]
						 for every $i = 1,\dots,r$ where $\xi_i$ is the minimal positive solution of the equation $D_i^2 x^2 - 2(D_p \cdot D_i) x + D_p^2 = 0$.
				 \end{itemize}
		 \end{enumerate}
		 Then, $(X,D)$ is pseudo algebraically hyperbolic (see Definition \ref{def:alghyp}).
	 \end{theorem}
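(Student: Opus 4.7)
The plan is to apply Corollary \ref{cor:surf_b} to $X$ with the divisors $D_1,\dots,D_r$ and the ample line bundle $\calL = \calO_X(D_p)$. The heart of the argument is to prove that under the stated numerical hypothesis, the invariants $\beta(\calL, D_i)$ exceed the multiplicities $p_i$ strictly; once this is established, the Ru--Vojta-type inequality combines with the First Main Theorem to give the desired algebraic hyperbolicity bound.

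\textbf{Step 1 (Lower bound on $\beta(\calL,D_i)$).} Since $X$ is a smooth projective surface, asymptotic Riemann--Roch yields $h^0(X, ND_p) = \tfrac{N^2 D_p^2}{2} + O(N)$, and for $0 \leq m \leq N\xi_i$ one has (using standard volume estimates on surfaces, e.g.\ via Zariski decomposition when $ND_p - mD_i$ is big but not nef)
\[
h^0(X, ND_p - mD_i) \;\geq\; \frac{(ND_p - mD_i)^2}{2} + O(N).
\]
Summing in $m$ and passing to a Riemann sum gives
\[
\beta(\calL, D_i) \;\geq\; \frac{1}{D_p^2}\int_0^{\xi_i} (D_p - tD_i)^2\, dt.
\]
Expanding the integrand and using the defining relation $D_i^2\xi_i^2 = 2(D_p\cdot D_i)\xi_i - D_p^2$ to rewrite the cubic term $\tfrac{\xi_i^3 D_i^2}{3}$, the integral simplifies to $\tfrac{1}{3}\bigl(2D_p^2 \xi_i - (D_p\cdot D_i)\xi_i^2\bigr)$. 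Thus
\[
\beta(\calL, D_i) \;\geq\; \frac{2 D_p^2 \xi_i - (D_p\cdot D_i)\xi_i^2}{3 D_p^2},
\]
and the hypothesis $2D_p^2\xi_i > (D_p\cdot D_i)\xi_i^2 + 3 D_p^2 p_i$ is exactly the statement that this lower bound exceeds $p_i$ strictly.

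\textbf{Step 2 (Application of Corollary \ref{cor:surf_b}).} Let $\varphi \colon \calC \to X$ be a morphism whose image is not contained in the ``bad'' locus of curves of degree $\leq c_3$ furnished by Corollary \ref{cor:surf_b}. Take $S \subset \calC$ to be the finite set of points $v$ with $\varphi(v) \in \Supp(D)$. Then $|S| = N^{[1]}_\varphi(D)$, the counting function $N_{D_i,S}(\varphi)$ vanishes, and by the First Main Theorem $m_{D_i,S}(\varphi) = h_{D_i}(\varphi) = \deg \varphi^*D_i$. Moreover $h_\calL(\varphi) = \sum_i p_i \deg\varphi^*D_i$. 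Plugging these identities into the conclusion of Corollary \ref{cor:surf_b} and rearranging gives
\[
\sum_{i=1}^r \bigl(\beta(\calL,D_i) - (1+\varepsilon)p_i\bigr)\deg\varphi^*D_i \;\leq\; c_1\bigl(2g(\calC)-2 + N^{[1]}_\varphi(D)\bigr).
\]
Choosing $\varepsilon$ small so that each coefficient on the left is strictly positive (possible by Step 1), we conclude that each $\deg\varphi^*D_i$, and hence $\deg\varphi^*\calL = \sum p_i \deg\varphi^*D_i$, is bounded by a constant multiple of $2g(\calC) - 2 + N^{[1]}_\varphi(D)$, which is exactly the algebraic hyperbolicity inequality \eqref{eq:alghyp} for the ample bundle $\calL$.

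\textbf{Main obstacle.} The technical core is the Riemann--Roch computation in Step 1: obtaining the lower bound $h^0 \geq \tfrac{(\cdot)^2}{2} + O(N)$ uniformly in the range $m \leq N\xi_i$, where $ND_p - mD_i$ may well be big without being nef. This is handled by invoking Zariski decomposition on the surface and bounding the volume from below by the self-intersection of the positive part, which is standard but must be tracked carefully to match the $\xi_i$ threshold. A more delicate secondary issue is showing that the exceptional locus from Corollary \ref{cor:surf_b}---\emph{a priori} the union of all curves in $X \subset \PP^m$ of degree at most $c_3$---is a proper closed subvariety of $X$, as required by Definition \ref{def:alghyp}. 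This should follow from the strong positivity encoded in the $\beta$-inequalities, which implies that $X$ is not ruled by a moving family of low-degree curves; otherwise one obtains infinitely many curves violating the $\beta$-bound applied iteratively.
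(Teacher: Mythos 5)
Your Steps 1 and 2 are essentially the paper's argument (Lemma \ref{lemma:pi} plus Corollary \ref{cor:surf_b} plus Lemma \ref{lemma:epsilon}), and the Riemann--Roch computation is correctly reproduced. But there is a genuine gap in the last step, and you have also misdiagnosed where the difficulty lies.

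First, a smaller point: the inequality coming out of Corollary \ref{cor:surf_b} has $\max\{1,\, 2g(\calC)-2+|S|\}$ on the right, not the bare quantity $2g(\calC)-2+N^{[1]}_\varphi(D)$. So what Steps 1--2 actually deliver is a bound of the form $\deg\varphi^*D_p \le \alpha \max\{1,\, 2g(\calC)-2+N^{[1]}_\varphi(D)\}$, which is \emph{not} yet the inequality \eqref{eq:alghyp} of Definition \ref{def:alghyp}. The two agree exactly when $2g(\calC)-2+N^{[1]}_\varphi(D) \ge 1$.

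Second, and more importantly, you have identified the wrong source of the exceptional set. The low-degree curves of degree $\le c_3$ are \emph{harmless}: for any such $\varphi$, since $\max\{1,\ldots\}\ge 1$, the bound $\deg\varphi^*D_p \le c_3 \le c_3\cdot\max\{1,\, 2g(\calC)-2+N^{[1]}_\varphi(D)\}$ holds automatically, so these curves never need to be placed in $Z$. The real obstruction are the curves with $2g(\calC)-2+N^{[1]}_\varphi(D_p) \le 0$ (e.g.\ rational curves meeting $\Supp D$ in at most two points, or elliptic curves avoiding $D$ entirely): for these, inequality \eqref{eq:alghyp} would force $\deg\varphi^*\calL \le 0$, so their images must be swept into $Z$. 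Your proposed patch --- that the $\beta$-inequalities preclude a moving family of low-degree curves --- does not address this case and does not yield the required proper closed set. The paper handles it by a completely different mechanism: it reduces to $\kappa = \C$ by a specialization argument, invokes Levin's \cite[Theorem 8.3 B]{Levin} (whose hypothesis --- that $D_p$ is ``large'' in Levin's sense --- is verified as in \cite{CZAnnals}) to obtain a proper closed $Z\subset X$ containing the image of every entire curve $\C\to X\setminus D$, and then observes that whenever $2g(\calC)-2+N^{[1]}_\varphi(D_p)\le 0$ there is a nonconstant holomorphic map $\C\to \calC\setminus\supp\varphi^*D_p$, so $\varphi(\calC)\subset Z$. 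This last step is a genuine further input and cannot be extracted from the $\beta$-computation alone.
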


	 The arithmetic analogue of Theorem \ref{th:CZ_ff} was proven by Corvaja and Zannier in \cite[Main Theorem]{CZAnnals}. Our proof is different from the arithmetic case since it relies on Theorem \ref{th:ffconst_b} while Corvaja and Zannier's method relies on a direct application of the Schmidt's Subspace Theorem (and, as noted by Vojta, their proof does not apply directly to the function field case). Nevertheless we will use some of the techniques of \cite{CZAnnals} in computing the constants $\beta$.

\begin{lemma}
	In the same setting as Theorem \ref{th:CZ_ff}, for every $i=1,\dots,r$, $\beta_i = \beta(D_p,D_i) > p_i$.
	\label{lemma:pi}
\end{lemma}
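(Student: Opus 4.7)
The plan is to bound $\beta(D_p, D_i)$ from below directly from the definition. Writing $t = m/N$, the self-intersection $(ND_p-mD_i)^2 = N^2(D_p - tD_i)^2$ is nonnegative precisely when $t \in [0,\xi_i]$, by the very choice of $\xi_i$ as the smallest positive root of the quadratic $D_i^2 x^2 - 2(D_p\cdot D_i)x + D_p^2$. Thus I will estimate the sum $\sum_{m\geq 1} h^0(X, ND_p - mD_i)$ on the range $1 \leq m \leq N\xi_i$, and the resulting cubic-in-$N$ contribution, divided by $N\,h^0(X,ND_p) \sim \frac{N^3 D_p^2}{2}$, will produce the desired lower bound on $\beta$.

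The main analytic input is asymptotic Riemann--Roch on the surface $X$: in this range I will show that
\[
  h^0(X, ND_p - mD_i) \geq \tfrac{1}{2}(ND_p - mD_i)^2 + O(N),
\]
by arguing that $h^2(X, ND_p - mD_i) = h^0(X, K_X - ND_p + mD_i) = 0$ for $N$ large. The key point is the strict inequality $\xi_i < D_p^2/(D_p\cdot D_i)$, which follows from the Hodge index theorem applied to the ample divisor $D_p$; this forces $(K_X - ND_p + mD_i)\cdot D_p < 0$ uniformly in the admissible range of $m$ for $N \gg 0$, so the divisor cannot be effective.

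Once the Riemann--Roch bound is in hand, summing over $m$ and recognizing the result as a Riemann sum yields
\[
  \sum_{m\geq 1} h^0(X, ND_p - mD_i) \geq \tfrac{N^3}{2} \int_0^{\xi_i}(D_p - tD_i)^2\,dt + O(N^2).
\]
Expanding and using the defining equation $D_i^2 \xi_i^2 = 2(D_p\cdot D_i)\xi_i - D_p^2$ to eliminate the cubic term $D_i^2 \xi_i^3$, the integral collapses to $\frac{1}{3}\bigl(2 D_p^2 \xi_i - (D_p\cdot D_i)\xi_i^2\bigr)$. Dividing by $N\,h^0(X, ND_p)$ and passing to the $\liminf$ gives
\[
  \beta(D_p, D_i) \geq \frac{2 D_p^2 \xi_i - (D_p\cdot D_i)\xi_i^2}{3 D_p^2} > p_i,
\]
where the final strict inequality is exactly the hypothesis of Theorem \ref{th:CZ_ff}.

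I expect the most delicate step to be the $h^2$-vanishing, since the statement imposes no sign condition on the individual $D_i^2$: the analysis of $\xi_i$ must be carried out uniformly across the three cases $D_i^2 > 0$, $D_i^2 = 0$, and $D_i^2 < 0$, and in each one must check that $\xi_i$ is a well-defined positive real with $\xi_i(D_p\cdot D_i) < D_p^2$. After this uniform positivity is established, the rest is purely algebraic manipulation of the quadratic relation defining $\xi_i$ together with the standard $\frac{N^2 D_p^2}{2}$ asymptotics of $h^0(X, ND_p)$ for the ample divisor $D_p$.
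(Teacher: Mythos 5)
Your proposal is correct and follows the same approach as the paper: the paper invokes the asymptotic Riemann--Roch estimate on pages 718--719 of Corvaja--Zannier for the bound on $\sum_{m\le N\xi_i}h^0(ND_p-mD_i)$, whereas you spell out that estimate (including the $h^2$-vanishing via $(K_X-ND_p+mD_i)\cdot D_p<0$), but the resulting expression $\tfrac{1}{3}\bigl(2D_p^2\xi_i-(D_p\cdot D_i)\xi_i^2\bigr)$ and the final comparison with $p_i$ coincide exactly with the paper's computation.
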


\begin{proof}
	By the Riemann-Roch Theorem, for $N$ large enough we have that $ 2 h^0(ND_p)= D_p^2N^2+O(N)$. In order to compute $\beta_i$, we consider the divisor $ND_p -m D_i$ which is effective if $m\le N p_i$.

By the same computation as in \cite{CZAnnals} (in particular see pages 718--719) we get
\begin{equation}
	\sum_{m=0}^{\xi_i N} h^0(ND_p - mD_i) \geq N^3 \left( \frac{\xi_i^2 ( D_p \cdot D_i)}{2} - \frac{\xi_i^3 D_{i}^2}{3} \right) + O(N^2).
	\label{eq:CZl}
\end{equation}

Recall that by definition of $\xi_i$, we have $D_i^2 \xi_i^3 = 2(D_p \cdot D_i) \xi_i^2 - D_p^2 \xi = 0.$ Using this in equation \ref{eq:CZl} we obtain

\begin{align*}
	2\sum_{m=0}^{\xi_i N}h^0(NA-m  D_i)&\ge 2 N^3 \left( \frac{\xi_i^2 (D_p \cdot D_i)}{2} - \frac{2 \xi_i^2 (D_p \cdot D_i)}{3} + \frac{D_p^2 \xi_i}{3} \right) + O(N^2) \\
					&= \left(\frac23  \xi_iD_p^2-\frac13(D_p \cdot D_i)\xi_i^2\right)N^3        + O(N^2).
\end{align*}
This implies that
\begin{align*}
\beta_i=\frac{ \frac23  \xi_iD_p^2-\frac13(D_p \cdot D_i)\xi_i^2}{D_p^2}>p_i.
 \end{align*}
\end{proof}

We also include an easy lemma which provides a useful trick to get a lower bound of the height of a point with respect to $D_p$.

\begin{lemma}
  In the same settings as Theorem \ref{th:CZ_ff}, let $\varepsilon = \min\{ (\beta-p_i)/p_i  \}$, then for very $\varphi: \calC \to X$,
	\[
		\sum_{i=1}^r \beta_i h_{D_i}(\varphi) \geq (1 + \varepsilon) h_{D_{p}}(\varphi).
	\]
	\label{lemma:epsilon}
\end{lemma}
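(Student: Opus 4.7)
The plan is to deduce the bound directly from Lemma \ref{lemma:pi} combined with the definition of $\varepsilon$ and the linearity of heights. First, I would recall that because each $D_i$ is effective and the heights in the function field setting are defined as degrees of pullback divisors, one has $h_{D_i}(\varphi)\ge 0$ (for $\varphi$ whose image is not contained in $D_i$), and the height function is additive on divisors, so that
\[
h_{D_p}(\varphi) \;=\; \sum_{i=1}^{r} p_i\, h_{D_i}(\varphi).
\]

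Next, by Lemma \ref{lemma:pi}, each invariant satisfies $\beta_i>p_i$, so $(\beta_i-p_i)/p_i>0$ for every $i$; thus the quantity $\varepsilon=\min_i (\beta_i-p_i)/p_i$ is strictly positive. By the very definition of $\varepsilon$, $\beta_i\ge (1+\varepsilon)p_i$ for every $i=1,\dots,r$.

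Combining these two facts, and using $h_{D_i}(\varphi)\ge 0$ to preserve the direction of the inequality after multiplication, I would conclude
\[
\sum_{i=1}^{r}\beta_i\,h_{D_i}(\varphi)\;\ge\;(1+\varepsilon)\sum_{i=1}^{r}p_i\,h_{D_i}(\varphi)\;=\;(1+\varepsilon)\,h_{D_p}(\varphi),
\]
which is the desired estimate.

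There is essentially no obstacle here: the only point worth being careful about is the non-negativity $h_{D_i}(\varphi)\ge 0$, which should be stated explicitly since it is what allows the termwise inequality $\beta_i\ge(1+\varepsilon)p_i$ to be upgraded into a sum inequality. The lemma is really just the observation that $\varepsilon$ was \emph{defined} in the sharpest possible way that makes this uniform rescaling of coefficients valid, and it will be used later as the mechanism that lets the Ru--Vojta estimate of Corollary \ref{cor:surf_b} absorb the $(1+\epsilon)h_{D_p}$ term on the right-hand side into the $\sum\beta_i m_{D_i,S}$ term on the left, yielding a nontrivial bound on the Euler characteristic contribution.
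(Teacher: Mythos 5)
Your proof is correct and follows essentially the same route as the paper: both arguments hinge on $\beta_i\ge(1+\varepsilon)p_i$ (which is what the definition of $\varepsilon$ encodes), the linearity $h_{D_p}=\sum p_i h_{D_i}$, and the non-negativity of the individual $h_{D_i}(\varphi)$. The paper phrases it by rewriting $\beta_i=\bigl((\beta_i-p_i)/p_i+1\bigr)p_i$ before summing, but this is just a cosmetic repackaging of the same termwise estimate; your version is, if anything, slightly cleaner in making the role of $h_{D_i}(\varphi)\ge 0$ explicit.
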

\begin{proof}
	We have that for every $i$ 
	\[
		\beta_i h_{D_i}(\varphi) = \left( \frac{\beta_i - p_i}{p_i} + 1 \right) p_i h_{D_i}(\varphi).
	\]
	Then, summing over $i=1,\dots,r$ we get
	\[
		\sum_{i=1}^r \beta_i h_{D_i}(\varphi)  = \sum_{i=1}^r \left( \frac{\beta_i - p_i}{p_i} + 1 \right) p_i h_{D_i}(\varphi) \geq (1 + \varepsilon) h_{D_{p}}(\varphi).
	\]
\end{proof}

\begin{proof}[Proof of Theorem \ref{th:CZ_ff}]
	We will prove that there exists a constant $\alpha > 0$ such that for every morphism $\varphi: \calC \to X$ with $\varphi(\calC) \nsubseteq D$ one has
	\begin{equation}
		\deg \varphi^*D_p \leq \alpha \cdot \max \{ 1, 2g(\calC) - 2 + N^{[1]}_\varphi(D_p) \}.
	  \label{eq:mainboundff}
	\end{equation}
	We note that such a bound implies that $(X,D)$ is pseudo algebraically hyperbolic in the sense of Definition \ref{def:alghyp} provided that the union of images of curves for which $2g(\calC) - 2 + N^{[1]}_\varphi(D_p) \leq 0$ is a proper closed subset of $X$. We will first prove the bound \eqref{eq:mainboundff} and then show that the exceptional set is indeed a proper and closed subset.
First we note that condition (1) implies that $D_1,\hdots,D_r$ are in general position.  
We fix
\begin{align*}
	\varepsilon=\min_i\left\{\dfrac{\beta_i-p_i}{p_i}\right\}
\end{align*}
which is positive by Lemma \ref{lemma:pi}. Let  $\varphi: \calC \to X$  be a morphism such that $\varphi(\calC) \nsubseteq D$.
By Corollary \ref{cor:surf_b}, applied with $\calL = \calO_X(D_p)$, there exist constants $c_1$ and $c_3$ depending only on $X$ and $\varepsilon$ (independent of $\calC$ and $\varphi$) such that either $\varphi(\calC)$ has degree bounded by $c_3$ or
\begin{align}\label{mainineq2}
	\sum_{i=1}^r \beta_i m_{D_i,S}(\varphi)\le (1+\frac{\epsilon}2) h_{D_p}(\varphi)+c_2 \max\{ 1, 2g(\calC) - 2 + \lvert S \rvert \},
\end{align}
where $S = \supp(\varphi^* D)$. In the first case we are done. In the latter, since the support of $\varphi^* D$ is contained in $S$, $m_{D_i,S}(\varphi)=h_{D_i}(\varphi) + O(1)$.

Lemma \ref{lemma:epsilon} gives the lower bound
\begin{align*}
	\sum_{i=1}^r \beta_im_{D_i,S}(\varphi)> (1+\epsilon) h_{D_p}(\varphi).
\end{align*}
Together with \eqref{mainineq2}, this implies that there exists a constant $c_2'$ such that 
\[
	\frac  {\epsilon}2 h_{D_p}(\varphi)< c_2' \max\{ 1, 2g(\calC) - 2 + \lvert S \rvert  \}.
\]
Hence, 
\[
	\deg \varphi^* D_p = h_{D_p}(\varphi) < 2c_2' \epsilon^{-1}\max\{ 1, 2g(\calC) - 2 + N_\varphi^{[1]}(D_p) \}.
\]
To finish the proof we need to show that there exists a closed subvariety $Z$ that contains all images $\varphi(\calC)$ when $2g(\calC) - 2 + N_\varphi^{[1]}(D_p)$ is not positive. First of all, we can reduce to the case in which $\kappa = \C$: it is enough to notice that given $(X,D)$ defined over $\kappa$ there exists a field $\kappa_0$ that is an algebraically closed subfield of $\kappa$, which is finitely generated over $\Qbar$, and such that $(X,D)$ has a model over $\kappa_0$. For such $\kappa_0$ there exists an embedding $\kappa_0 \to \C$, and therefore we can reduce the problem to the case in which $(X,D)$ is defined over $\C$.

In this case, we claim that the exceptional set $Z$ can be chosen to be the exceptional set in \cite[Theorem 8.3 B]{Levin}. In fact, the same strategy of Levin applies to the setting of our theorem (in particular the proof of \cite[Main Theorem]{CZAnnals} shows that $D_p$ is \emph{large} in the sense of Levin) and therefore there exists a proper closed subvariety $Z \subset X$ that contains all the images of entire curve $\C \to X$. To conclude, it is sufficient to notice that for every curve $\calC$ such that $2g(\calC) - 2 + N_\varphi^{[1]}(D_p) \leq 0$ there exists a non trivial holomorphic map $\C \to \calC \setminus \supp \varphi^* D_p$. In particular this implies that $\varphi(\calC)$ has to be contained in $Z$ as wanted.

\end{proof}
	
Next we show that Theorem \ref{th:CZ_ff} leads to a statement for orbifold morphisms, in particular we obtain that Conjecture \ref{conj:main_orb} holds in this setting.

\begin{corollary}
	Let $(X,D)$ as in Theorem \ref{th:CZ_ff}. Let $\Delta$ be the $\Q$-divisor defined as
	\[
		\Delta = \sum_{i=1}^r \left( 1 - \frac{1}{m_i} \right) D_i,
	\]
	for some integers $m_i \geq 1$.
	Then, there exists a positive integer $m > 0$ such that, if $m_i > m$ for every $i$, $(X,\Delta)$ is pseudo algebraically hyperbolic (see Definition \ref{def:ah_orb}).
	\label{cor:CZ_orb}
\end{corollary}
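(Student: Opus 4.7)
The strategy is to reduce to Theorem \ref{th:CZ_ff} applied to the underlying logarithmic pair $(X,D)$, using the orbifold morphism condition to trade the truncated counting function $N^{[1]}_\psi(D)$ for the orbifold degree $\deg \Delta_\calC$ plus a small correction that can be absorbed once $m$ is large.

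First, apply Theorem \ref{th:CZ_ff} to $(X,D)$ to obtain a constant $\alpha > 0$ and a proper closed subvariety $Z \subset X$ (both independent of the multiplicities $m_i$) such that, for every orbifold morphism $\psi: (\calC,\Delta_\calC) \to (X,\Delta)$ with $\psi(\calC) \not\subseteq Z$, the underlying morphism satisfies
\[
\deg \psi^* D_p \leq \alpha \cdot \max\{1,\, 2g(\calC) - 2 + N^{[1]}_\psi(D)\}.
\]
This is legitimate because an orbifold morphism by definition has $\psi(\calC) \not\subseteq \supp D$.

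The heart of the argument is a dichotomy on the points $Q \in \supp \psi^* D$. Writing $n_{i,Q}$ for the multiplicity of $\psi^* D_i$ at $Q$, the orbifold morphism condition reads $n_{i,Q} \cdot n_\calC(Q) \geq m_i$ whenever $n_{i,Q} > 0$. Call $Q$ of \emph{Type 1} if there exists $i$ with $0 < n_{i,Q} \leq m_i/2$, so that necessarily $n_\calC(Q) \geq 2$ and hence $Q$ contributes at least $1/2$ to $\deg \Delta_\calC$; otherwise, call $Q$ of \emph{Type 2}, in which case for every $i$ with $n_{i,Q} > 0$ one has $n_{i,Q} > m_i/2$, so $Q$ contributes at least $pm/2$ to $\deg \psi^* D_p$, where $p = \min_i p_i$ and $m = \min_i m_i$. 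Counting both types yields
\[
N^{[1]}_\psi(D) \leq 2\deg \Delta_\calC + \frac{2}{pm}\deg \psi^* D_p.
\]

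Feeding this into the bound from Theorem \ref{th:CZ_ff} and choosing $m$ so that $2\alpha/(pm) \leq 1/2$ (i.e.\ $m \geq \lceil 4\alpha/p \rceil$), one can absorb the $\deg \psi^* D_p$ term on the left and deduce
\[
\deg \psi^* D_p \leq 2\alpha \cdot \max\{1,\, 2g(\calC) - 2 + 2\deg \Delta_\calC\},
\]
which in turn gives a bound of the form $\deg \psi^* \calL \leq \alpha' \cdot \max\{1,\, 2g(\calC) - 2 + \deg \Delta_\calC\}$ for $\calL = \calO_X(D_p)$.

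Finally, the cases where $2g(\calC) - 2 + \deg \Delta_\calC \leq 0$ are handled exactly as at the end of the proof of Theorem \ref{th:CZ_ff}: such curves, if not already contained in $Z$, have $\deg \psi^* D_p$ uniformly bounded, and by reducing to $\kappa = \C$ and invoking Levin's exceptional set from \cite[Theorem 8.3 B]{Levin} together with the orbifold uniformization of $(\calC,\Delta_\calC)$ in negative Euler characteristic, their images lie in a proper closed subset that we enlarge $Z$ by. This delivers the conclusion of Definition \ref{def:ah_orb}.

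The main technical subtlety lies in the choice of the threshold $m_i/2$ in the Type 1/Type 2 dichotomy: any threshold would work, but it has to be chosen so that the resulting coefficient of $\deg \psi^* D_p$ (which is $2/(pm)$) can be driven strictly below $1/\alpha$ by enlarging $m$, while simultaneously producing a finite universal conversion factor between the Type 1 count and $\deg \Delta_\calC$. The anticipated obstacle is bookkeeping in the edge-case cleanup, in particular ensuring that the enlargement of $Z$ to handle curves with $2g(\calC) - 2 + \deg \Delta_\calC$ non-positive still gives a proper closed subvariety; this is where the analytic input from Levin's theorem is crucial, just as in the logarithmic case.
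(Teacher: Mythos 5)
Your overall strategy—reducing to Theorem \ref{th:CZ_ff} by bounding $N^{[1]}_\psi(D)$ in terms of $\deg\Delta_\calC$ plus a small multiple of the height—is the same idea as the paper's Lemma \ref{lemma:orbif_ff}. But the Type 1/Type 2 dichotomy with threshold $m_i/2$ costs you a factor of $2$ in front of $\deg\Delta_\calC$, and this factor is fatal, not merely cosmetic. After absorption you land at $\deg\psi^*D_p \le 2\alpha(2g(\calC)-2+2\deg\Delta_\calC)$, and there is \emph{no} constant $\alpha'$ and ample $\calL'$ for which this implies $\deg\psi^*\calL'\le\alpha'(2g(\calC)-2+\deg\Delta_\calC)$ as Definition \ref{def:ah_orb} requires: for $g=0$ and $\deg\Delta_\calC=2+\varepsilon$ with $\varepsilon\to 0^{+}$, the target right-hand side $\alpha'\varepsilon$ tends to $0$ while your derived bound $2\alpha(2+2\varepsilon)$ stays bounded below by $4\alpha$. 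Your cleanup step does not touch this range (you only address $2g-2+\deg\Delta_\calC\le 0$), and it would not work anyway: orbifold uniformization of $(\calC,\Delta_\calC)$ produces an \emph{orbifold} entire curve $\psi\circ g:\C\to(X,\Delta)$ which is allowed to meet $D$ with controlled multiplicities, whereas Levin's exceptional set from \cite[Theorem 8.3 B]{Levin} only controls entire curves $\C\to X\setminus D$; the paper has no orbifold analogue of this ``Brody hyperbolic modulo $Z$'' statement.

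The paper avoids both problems at once (Lemma \ref{lemma:orbif_ff}). It constructs the minimal orbifold structure $\widetilde\Delta$ on $\calC$ and proves the lossless inequality $N^{[1]}_\psi(\supp\Delta)\le\deg\widetilde\Delta+\sum_j\deg\psi^*\Delta_j/m_j$ (factor $1$, not $2$; you can also obtain this pointwise, since at each $Q$ the orbifold condition $n_{i,Q}\,n_\calC(Q)\ge m_i$ gives $\sum_{i:\,n_{i,Q}>0}n_{i,Q}/m_i\ge 1/n_\calC(Q)$, hence $1\le(1-1/n_\calC(Q))+\sum_i n_{i,Q}/m_i$). It then absorbs the error term not by inflating the constant $\alpha$ but by replacing the line bundle: $\calL_\Delta=\calL\otimes\calO_X(-\sum_j\tfrac{\alpha}{m_j}\Delta_j)$, which is ample once the $m_j$ are large, yielding $\deg\psi^*\calL_\Delta\le\alpha(2g(\calC)-2+\deg\widetilde\Delta)$ with the \emph{same} $\alpha$. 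Since the left side is strictly positive for nonconstant $\psi$ outside $Z$, this forces $2g-2+\deg\widetilde\Delta>0$, and so $2g-2+\deg\Delta_\calC>0$ for every admissible $\Delta_\calC\ge\widetilde\Delta$; the ``edge case'' you worry about never arises. To repair your write-up, replace the $m_i/2$-threshold dichotomy by the factor-$1$ pointwise estimate above (or by the paper's $\widetilde\Delta$ argument), and either use the line-bundle modification or observe that with factor $1$ the constant-absorption $\deg\psi^*D_p\le\frac{\alpha}{1-\alpha/(pm)}(2g-2+\deg\Delta_\calC)$ already has the right shape and its positivity eliminates the edge cases.
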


We obtain the previous corollary applying the following general lemma to Theorem \ref{th:CZ_ff}. Note that, given an orbifold $(X,\Delta)$ such that $X \setminus \supp(\Delta)$ is of log-general type, there exists an integer $m > 0$ such that, if all orbifold multiplicities of $\Delta$ are greater than $m$, the orbifold $(X,\Delta)$ is of general type. In particular the following lemma is in accordance with Conjecture \ref{conj:main_orb}.

\begin{lemma}
\label{lemma:orbif_ff}
	Let $(X,\Delta)$ be an orbifold defined over $\kappa$ such that $(X,\supp(\Delta))$ is pseudo algebraically hyperbolic with a constant $\alpha>0$ and with exceptional set $Z \subset X$.
	Then, there exists a positive integer $m > 0$ such that, if all orbifold multiplicities are greater than $m$, $(X,\Delta)$ is pseudo algebraically hyperbolic with the same constant $\alpha$ and the same exceptional set $Z$.
\end{lemma}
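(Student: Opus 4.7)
The plan is to deduce the orbifold pseudo-algebraic hyperbolicity directly from the logarithmic one, by comparing $N_\psi^{[1]}(\supp\Delta)$ with $\deg\Delta_\calC$ for an orbifold morphism $\psi$. The heuristic is that once every orbifold multiplicity $m_i$ of $\Delta$ is large, the minimal orbifold divisor $\Delta_\calC^{\min}$ on $\calC$ making $\psi$ an orbifold morphism carries large multiplicity at each preimage of $\supp\Delta$ where $\psi$ is only mildly ramified, so the two quantities differ only by an error term controlled by the ramification of $\psi$ over $\supp\Delta$, which in turn is bounded by $h_\calL(\psi)$ thanks to the ampleness of $\calL$.

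First I would fix an orbifold morphism $\psi\colon(\calC,\Delta_\calC)\to(X,\Delta)$ with $\psi(\calC)\nsubseteq Z$. Since the right-hand side of \eqref{eq:algorb} is monotone in $\Delta_\calC$, I may replace $\Delta_\calC$ by the minimal orbifold structure $\Delta_\calC^{\min}$: at each $Q\in\psi^{-1}(\supp\Delta)$, writing $\psi^*D_i=\sum_k n_{Q_k,i}\,Q_k$, the minimal multiplicity is $m_\calC(Q)=\max_{i:\,n_{Q,i}>0}\lceil m_i/n_{Q,i}\rceil$. Since $\psi$ is an orbifold morphism, $\psi(\calC)\nsubseteq\supp\Delta$, so the hypothesis on $(X,\supp\Delta)$ gives
\[
h_\calL(\psi)\leq\alpha\bigl(2g(\calC)-2+N_\psi^{[1]}(\supp\Delta)\bigr).
\]

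The key step is the pointwise identity
\[
N_\psi^{[1]}(\supp\Delta)-\deg\Delta_\calC^{\min}=\sum_{Q\in\psi^{-1}(\supp\Delta)}\frac{1}{m_\calC(Q)},
\]
combined with the bound $1/m_\calC(Q)\leq n_{Q,i_Q}/m$, obtained by choosing for each $Q$ any index $i_Q$ with $Q\in\supp\psi^*D_{i_Q}$ and invoking $m_{i_Q}\geq m$. Summing over $Q$ yields
\[
\sum_Q\frac{1}{m_\calC(Q)}\leq\frac{1}{m}\sum_i\deg\psi^*D_i\leq\frac{c}{m}\,h_\calL(\psi),
\]
where $c$ is a constant depending only on $X$, $\calL$ and the $D_i$: since $\calL$ is ample, a large enough multiple of $\calL$ dominates $\sum_i D_i$ modulo an effective divisor, and $\psi(\calC)\nsubseteq D_i$ for each $i$. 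Substituting back and choosing $m>\alpha c$ sufficiently large, I rearrange to obtain
\[
h_\calL(\psi)\leq\frac{\alpha}{1-\alpha c/m}\bigl(2g(\calC)-2+\deg\Delta_\calC\bigr),
\]
with the constant tending to $\alpha$ as $m\to\infty$ and the same exceptional set $Z$.

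The main subtlety is the comparison between $N_\psi^{[1]}(\supp\Delta)$ and $\deg\Delta_\calC^{\min}$: a crude bound fails because not every preimage of $\supp\Delta$ contributes a full unit to $\deg\Delta_\calC^{\min}$, but the missing mass sits precisely at points where $\psi$ ramifies heavily over $\supp\Delta$, and such ramification is controlled by the $\calL$-height via ampleness. Obtaining \emph{exactly} the constant $\alpha$ rather than one approaching $\alpha$ from above would require a slightly more refined bookkeeping (e.g.\ starting from the log hypothesis with a smaller constant and compensating by further increasing $m$); this is inessential for pseudo algebraic hyperbolicity, which is an existence statement for some positive constant.
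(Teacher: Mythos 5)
Your proposal is correct and follows essentially the same route as the paper: fix the orbifold morphism, reduce to the minimal orbifold structure $\Delta_\calC^{\min}$, express the discrepancy $N_\psi^{[1]}(\supp\Delta)-\deg\Delta_\calC^{\min}=\sum_Q 1/m_\calC(Q)$, bound it by $\tfrac{1}{m}\sum_i\deg\psi^*D_i$, and then absorb this error via ampleness of $\calL$. The only cosmetic difference is in the last step: you absorb $\tfrac{\alpha c}{m}h_\calL(\psi)$ into the left-hand side and get a constant $\alpha/(1-\alpha c/m)>\alpha$ for the original line bundle, whereas the paper twists the line bundle to $\calL_\Delta=\calL\otimes\calO_X(-\sum_j\tfrac{\alpha}{m_j}\Delta_j)$ (ample for $m_j\gg0$) and keeps the constant $\alpha$ exactly; since the remark after Definition \ref{def:alghyp} notes the choice of ample $\calL$ is immaterial up to the value of the constant, the two formulations are equivalent for establishing pseudo algebraic hyperbolicity.
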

	
We note that Lemma \ref{lemma:orbif_ff} and its proof generalize \cite[Theorem 4.8]{Rou} where it was assumed that $X = \PP^n$.
	
\begin{proof}[Proof of Lemma \ref{lemma:orbif_ff}]
	Let $\psi: (\calC, \Delta_C) \to (X,\Delta)$ be an orbifold morphism. We will use the notation $\delta: = N_\psi^{[1]}(\supp(\Delta))$. 
	Let 
\begin{eqnarray*}
\psi^*(\Delta_{j}) & = & \sum_{i=1}^{\delta}t_{i,j}P_{i},\\
\psi^*(\supp(\Delta)) & = & \sum_{i=1}^{\delta}t_{i}P_{i},\\ 
\end{eqnarray*}
where the $P_{i}$ are the distinct points of $\psi^{-1}(\supp(\Delta))$. Then if 
\[
	\Delta_C=\sum_{i=1}^{\delta}\left( 1 - \frac{1}{m_{i}'}\right) P_{i},
	\]
	$\psi$ is an orbifold morphism if $m_{i}'t_{i}\geq m_{j}$ for all $j\in \varphi(i)$ where $\varphi(i)$ is the set 
\[
	\varphi(i) :=\{ k: 1 \leq k \leq \delta \text{ and } \psi(P_{i}) \in \Delta_{k}\} .
\]
We define the orbifold structure $\widetilde{\Delta}$ on $\calC$ by the multiplicities
\[
	\widetilde{m_{i}}=\sup_{j\in \varphi(i)} \left\lceil\frac{m_{j}}{t_{i}}\right\rceil,
\]
where as usual $\lceil k \rceil$ denotes the round up of $k$. By definition $\psi: (C,\widetilde{\Delta}) \to (X,\Delta)$ is an orbifold morphism. Moreover, for every orbifold structure $\Delta'$ on $\calC$ such that $\psi:(C,\Delta') \to (X,\Delta)$ is an orbifold morphism we have $\Delta' \geq \widetilde{\Delta}$. In particular it is enough to prove that $\psi$ satisfies a degree bound as in Definition \ref{def:ah_orb} for the orbifold structure $\widetilde{\Delta}$. In other words we have to prove that there exists a constant $\alpha_\Delta$ and an ample line bundle $\calL_\Delta$ on $X$ such that, for every $\psi$ such that $\psi(\calC) \nsubseteq Z$ we have
\[
	\deg \psi^*\calL_\Delta \leq \alpha_\Delta \left(2g(C)-2+\sum_{i=1}^{\delta} \left(1-\frac{1}{\widetilde{m_{i}}}\right)\right). 
\]
We can bound the contribution coming from the orbifold divisor as follows: 
\begin{equation}
	\sum_{i=1}^{\delta} \left(1-\frac{1}{\widetilde{m_{i}}}\right) \geq \sum_{i=1}^{\delta}\left(1-\dfrac{t_{i}}{\displaystyle \sup_{j\in\varphi(i)}m_{j}}\right)\geq N_\psi^{[1]}(\supp(\Delta)) -\sum_{i=1}^{\delta}\sum_{j=1}^{q}\frac{t_{i,j}}{m_{j}} 
	\label{eq:orb_bound}
\end{equation}
On the other hand, by definition
\[
	\sum_{i=1}^{\delta}t_{i,j}=\deg(\psi^*\Delta_{j}),
\]
which implies that we can rewrite \eqref{eq:orb_bound} as
\begin{equation}
	\sum_{i=1}^{\delta} \left(1-\frac{1}{\widetilde{m_{i}}}\right) \geq N_\psi^{[1]}(\supp(\Delta)) - \sum_{j=1}^{q} \frac{\deg(\psi^*\Delta_{j})}{m_j}.
  \label{eq:orb_bound2}
\end{equation}
In particular, equation \eqref{eq:orb_bound2} can be rewritten as
\begin{equation}
	N_\psi^{[1]}(\supp(\Delta)) \leq \sum_{i=1}^{\delta} \left(1-\frac{1}{\widetilde{m_{i}}}\right) + \sum_{j=1}^{q} \frac{\deg(\psi^*\Delta_{j})}{m_j}.
	\label{eq:N1orb}
\end{equation}

On the other hand, our assumption that $(X,\supp(\Delta))$ is pseudo algebraically hyperbolic, implies that, for every ample line bundle $\calL$ on $L$, if $\psi(\calC) \nsubseteq Z$, we have
\begin{equation}
	\deg \psi^* \calL \leq \alpha\left( 2g(\calC) - 2 + N_\psi^{[1]}(\supp \Delta) \right).	
	\label{eq:log_hyp}
\end{equation}

Finally, combining equation \eqref{eq:log_hyp} with equation \eqref{eq:N1orb}, we obtain
\[
	\deg \psi^* \calL \leq \alpha\left( 2g(\calC)-2+ \deg \widetilde{\Delta} \right) + \deg \psi^* \left( \sum_{j=1}^{q} \frac{\alpha}{m_j}\Delta_j\right).
\]
Therefore we conclude by noticing that, when the multiplicities $m_j$ are big enough, the line bundle 
\[
	\calL_\Delta = \calL \otimes \calO_X(-\sum_{j=1}^\delta \frac{\alpha}{m_j} \Delta_j)
\]
is ample on $X$ and therefore we obtain
\[
\deg \psi^* \calL_\Delta \leq \alpha\left( 2g(\calC)-2+ \deg \widetilde{\Delta} \right). 
\]
\end{proof}
Finally we can prove Corollary \ref{cor:CZ_orb}.
\begin{proof}[Proof of Corollary \ref{cor:CZ_orb}]
	The conclusion follows from Theorem \ref{th:CZ_ff} and Lemma \ref{lemma:orbif_ff}.
\end{proof}

\subsection{Holomorphic Maps}
In this subsection we are interested in degeneracy properties of holomorphic maps. All varieties will be defined over $\C$. As before we will obtain our result as a consequence of the following generalization of \cite[General Theorem (Analytic Part)]{RV19} that includes truncation.

 \begin{theorem}\label{th:trungeneral}
    Let $X$ be a complex projective variety of dimension $n$ and let $D_1,\cdots,D_q$ be effective Cartier divisors intersecting properly on $X$. Let $\calL$ be a big line bundle. For each $\varepsilon>0$,  there exists a positive integer $Q$  such that for any algebraically non-degenerate holomorphic map $f:\mathbb{C}\to X$,  the inequality 
\begin{align*}
    \sum_{j=1}^q\beta(\mathscr L, D_j)T_{D_j,f}(r)-  (1+\varepsilon)T_{\mathscr L,f}(r)\leq_{\operatorname{exc}} \sum_{j=1}^q \beta(\mathscr L, D_j)N_f^{(Q)}(D_j,r) 
\end{align*}
    holds,  where $\leq_{\operatorname{exc}}$ means that the inequality holds for all $r\in \mathbb{R}^+$ except a set of finite Lebesgue measure.
    \end{theorem}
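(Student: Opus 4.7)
The plan is to adapt the proof of the analytic part of the Ru--Vojta General Theorem by propagating the truncation that is already present in Cartan's Second Main Theorem. After fixing $\varepsilon > 0$ and choosing $N$ sufficiently large, the Ru--Vojta argument reduces the problem to Cartan's SMT applied to the holomorphic map $\Phi \circ f : \C \to \PP^{M-1}$, where $\Phi$ is given by a basis of $V_N := H^0(X, \calL^N)$ and $M = \dim V_N$. Since Cartan's SMT with truncation caps multiplicities at level $M-1$, the desired truncation $Q$ in the statement should emerge automatically, with $Q$ depending on $N$ (hence on $\varepsilon$) but independent of $f$.

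First I would, following \cite{RV19} and using Autissier's lemma from \cite{Aut2}, produce for each $j$ a basis $\mathcal B_j$ of $V_N$ adapted to the filtration of $V_N$ by order of vanishing along $D_j$, whose sum of vanishing orders is at least $(1 - \varepsilon/3)\,\beta(\calL, D_j)\, N M$. The combined family yields $qM$ linear forms defining hyperplanes $H_1, \ldots, H_{qM}$ in $\PP^{M-1}$; a small perturbation places them in general position without affecting the leading asymptotics, and the algebraic non-degeneracy of $f$ gives linear non-degeneracy of $\Phi \circ f$. Cartan's SMT with truncation at level $M-1$ then yields
\[
\sum_{i=1}^{qM} m_{H_i, \Phi \circ f}(r) \leq_{\operatorname{exc}} (M + \varepsilon_1)\, T_{\Phi \circ f}(r) - \sum_{i=1}^{qM} N^{(M-1)}_{\Phi \circ f}(H_i, r).
\]
If $s \in \mathcal B_j$ vanishes to order $a_s$ along $D_j$, the proximity side satisfies $m_{H_s, \Phi \circ f}(r) \geq a_s\, m_{D_j, f}(r) + O(1)$, while on the counting side $N^{(M-1)}_{\Phi \circ f}(H_s, r) \leq a_s\, N^{(Q)}_f(D_j, r) + O(1)$ for $Q := M-1$. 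Summing inside each $\mathcal B_j$, dividing by $NM$, using $T_{\Phi \circ f}(r) = N\, T_{\calL, f}(r) + O(1)$, and rearranging produces the claimed inequality with the $(1+\varepsilon)$ factor absorbing all perturbation errors.

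The main obstacle will be the last step, where one needs to show that the truncation level $M-1$ on hyperplanes $H_s$ pulled back via $\Phi \circ f$ translates into a \emph{uniform} truncation at a single integer $Q$ on each $f^* D_j$, independently of $f$ and of which $D_j$ the basis element $s$ is adapted to. This requires controlling the vanishing orders $a_s$ and the ``extra'' components of $(s)_0$ outside $D_j$: the adapted-basis construction together with the general position perturbation of the $H_i$ ensures that such extra zeros of $s \circ f$ contribute only to lower-order error, so one can take $Q = M-1$. A secondary delicate point is that merging the $q$ bases $\mathcal B_j$ into one family in general position must be done without spoiling the $\beta$-asymptotics; this is where the averaging trick of Ru--Vojta, now transported to the analytic setting, is crucial.
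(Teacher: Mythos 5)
Your overall plan — run the Ru--Vojta argument and extract truncation from the Wronskian structure of the SMT — points in the right direction, but the specific way you propose to extract the truncation is a genuine gap, and the step you flag as the main obstacle is in fact fatal as stated. Once you pass to Cartan's SMT \emph{already truncated} at level $M-1$, the quantity $N^{(M-1)}_{\Phi\circ f}(H_s,r)$ retains no information about multiplicities beyond $M-1$, whereas the target quantity $\sum_j\beta_j\bigl(N_f(D_j,r)-N^{(Q)}_f(D_j,r)\bigr)$ grows without bound in the multiplicities. Concretely, at a point $z_0$ where $f$ meets $D_j$ with multiplicity $\mu\gg M$, the total contribution to $\sum_{s\in\mathcal B_j}N^{(M-1)}_{\Phi\circ f}(H_s,r)$ at $z_0$ is at most $M(M-1)$ (each hyperplane caps at $M-1$), while the excess $\beta_j(\mu-Q)$ you must dominate is unbounded. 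No choice of $Q$ makes the bookkeeping close, regardless of how carefully one handles the extra components of $(s)_0$. Note also that the direction is off: you state $N^{(M-1)}_{\Phi\circ f}(H_s,r)\le a_s N^{(Q)}_f(D_j,r)+O(1)$, but since $-\sum N^{(M-1)}(H_i)$ appears on the right of your Cartan inequality, an upper bound on $N^{(M-1)}(H_s)$ only \emph{weakens} the conclusion; you would need a \emph{lower} bound here. Relatedly, the form of Cartan you quote, $\sum_i m_{H_i}\le(M+\varepsilon_1)T-\sum_i N^{(M-1)}(H_i)$, is not a valid inequality: the correct SMT with Wronskian term reads $\sum_i m_{H_i}+N_W\le(M+\varepsilon)T$, and $N_W$ is bounded \emph{below} by (roughly) $\sum_i\bigl(N(H_i)-N^{(M-1)}(H_i)\bigr)$, not above by $\sum_iN^{(M-1)}(H_i)$.

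What the paper actually does is keep the \emph{untruncated} Wronskian counting function $N_{W(\Psi)}(0,r)$ (and the gcd term $N_h(0,r)$) explicitly, via the SMT with Wronskian of Vojta and Ru (Theorem~\ref{gsmt2}), and then perform a pointwise vanishing-order comparison to convert the Wronskian term into a truncation of $N_f(D_j,r)$. The key inequalities are (S1), which gives $\sum_i v^+_{z_0}(s_i\circ f)\ge\frac1C\sum_j\beta_j v^+_{z_0}(\rho_j\circ f)$ from the Ru--Vojta filtration lemma, and (ordW), $v^+_{z_0}(W(\Psi))\ge\sum_i v^+_{z_0}(s_i\circ f)-Mv_{z_0}(h)-\tfrac12M(M-1)$. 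Combined, they show the local excess $\sum_j\beta_j v^+_{z_0}(\rho_j\circ f)-C\bigl(v^+_{z_0}(W(\Psi))+Mv_{z_0}(h)\bigr)$ is bounded by the fixed constant $CM(M-1)/2$, independent of the multiplicity; this is what fixes $Q=\frac{CM(M-1)}{2\min_j\beta_j}$. A secondary (but also real) discrepancy: the per-divisor filtration you describe (a basis $\mathcal B_j$ for each $D_j$ alone) is not the Ru--Vojta construction; when the $D_j$ intersect, one needs the $(\sigma,\mathbf a)$-indexed filtrations of \cite[Lemma~6.8]{RV19}, producing a finite family of bases whose \emph{maximum} Weil function, not their sum, dominates $\sum_j\beta_j\lambda_{D_j}$ pointwise, since the SMT gives control only on a maximum over linearly independent subsets.
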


    The proof of Theorem \ref{th:trungeneral} will be given in Section \ref{sec:SMT} (see in particular Theorem \ref{trungeneral}). Using the previous theorem we obtain a Nevanlinna analogue of Theorem \ref{th:CZ_ff} and Corollary \ref{cor:CZ_orb}.

\begin{theorem}\label{th:CZ_nev}
		 Let $X\subset\mathbb P^m$ be a complex nonsingular projective surface and $D = D_1 + \dots + D_q$ be a divisor with $q \geq 2$, such that
		 \begin{enumerate}
			 \item No three of the components $D_i$ meet at a point;
			 \item There exists a choice of positive integers $p_i$ such that 
				 \begin{itemize}
					 \item the divisor $D_p := p_1 D_1 + p_2 D_2 + \dots + p_q D_q$ is ample;
					 \item The following inequality holds:
						 \[
							 2 D_p^2 \xi_i > (D_p \cdot D_i) \xi_i^2 + 3 D_p^2 p_i,
						 \]
						 for every $i = 1,\dots,q$ where $\xi_i$ is the minimal positive solution of the equation $D_i^2 x^2 - 2(D_p \cdot D_i) x + D_p^2 = 0$.
				 \end{itemize}
		 \end{enumerate}
		Let $\Delta$ be the $\Q$-divisor defined as
	\[
		\Delta = \sum_{i=1}^q \left( 1 - \frac{1}{m_i} \right) D_i.
	\]
	Then, there exists a positive integer $m$ such that, if $m_i \geq m$ for every $i$, every orbifold entire curve $\psi: \C \to (X,\Delta)$, is algebraically degenerate.
	 \end{theorem}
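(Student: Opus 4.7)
The plan is to argue by contradiction, adapting the proof of Corollary \ref{cor:CZ_orb} to the Nevanlinna setting by replacing Corollary \ref{cor:surf_b} with Theorem \ref{th:trungeneral}. Suppose $\psi:\C\to(X,\Delta)$ is an orbifold entire curve whose image is Zariski dense. First I would read off from Lemma \ref{lemma:pi} that $\beta_i:=\beta(\calO_X(D_p),D_i)>p_i$ for every $i$, and set $\varepsilon_0:=\min_i(\beta_i-p_i)/p_i>0$. The purely formal proof of Lemma \ref{lemma:epsilon} carries over verbatim from heights to characteristic functions (using $T_{D_p,\psi}=\sum p_iT_{D_i,\psi}+O(1)$) and yields the lower bound
\[
\sum_{i=1}^q\beta_i\,T_{D_i,\psi}(r)\geq(1+\varepsilon_0)\,T_{D_p,\psi}(r)+O(1).
\]

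Next I would invoke Theorem \ref{th:trungeneral} with $\calL=\calO_X(D_p)$ and $\varepsilon=\varepsilon_0/4$ to produce an integer $Q$ with
\[
\sum_{j=1}^q\beta_j\,T_{D_j,\psi}(r)-(1+\varepsilon)\,T_{D_p,\psi}(r)\leq_{\operatorname{exc}}\sum_{j=1}^q\beta_j\,N^{(Q)}_\psi(D_j,r).
\]
The orbifold hypothesis enters at this step: at each point $z_0\in\psi^{-1}(D_j)$ the multiplicity of $\psi^{\ast}D_j$ at $z_0$ is at least $m_j$, so provided we impose $m\geq Q$, comparing truncated and untruncated multiplicities pointwise gives
\[
N^{(Q)}_\psi(D_j,r)\leq\frac{Q}{m_j}\,N_\psi(D_j,r)\leq\frac{Q}{m_j}\,T_{D_j,\psi}(r)+O(1)
\]
by the First Main Theorem.

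Substituting this bound into the Ru--Vojta inequality and then combining with the lower bound from Lemma \ref{lemma:epsilon}, I would obtain
\[
(1+\varepsilon_0)\left(1-\frac{Q}{m}\right)T_{D_p,\psi}(r)\leq_{\operatorname{exc}}\left(1+\frac{\varepsilon_0}{4}\right)T_{D_p,\psi}(r)+O(1).
\]
Choosing $m$ large enough that $(1+\varepsilon_0)(1-Q/m)>1+\varepsilon_0/4$ forces $T_{D_p,\psi}(r)=O(1)$ outside a set of finite Lebesgue measure, which contradicts the ampleness of $D_p$ and the non-constancy of $\psi$. Hence no Zariski dense orbifold entire curve exists, proving the theorem.

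The main obstacle is the \emph{truncation level} $Q$ in Theorem \ref{th:trungeneral}: without a uniform truncation there is no way to convert the orbifold condition---which controls multiplicities of $\psi^{\ast}D_j$ pointwise---into a bound on counting functions that can be reabsorbed into $\sum_j\beta_j\,T_{D_j,\psi}$. Establishing this truncated Ru--Vojta inequality is the genuine technical content and is deferred to Section \ref{sec:SMT}; once it is in hand, the remaining steps parallel the orbifold reduction of Lemma \ref{lemma:orbif_ff} in the function field case, with $N_\psi^{[1]}$ replaced by $N_\psi^{(Q)}$ and heights replaced by characteristic functions.
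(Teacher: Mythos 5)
Your proposal is correct and follows essentially the same route as the paper: use Lemma \ref{lemma:pi} to get $\beta_i > p_i$, apply the truncated Ru--Vojta inequality (Theorem \ref{th:trungeneral}) with $\calL = \calO_X(D_p)$, convert the truncated counting function into $\frac{Q}{m_j}$ times a characteristic function via the orbifold condition, and pick $m$ large to force $T_{D_p,\psi}(r)=O(1)$, a contradiction with ampleness and non-constancy. The only differences from the paper's proof are cosmetic: you take $\varepsilon_0/4$ rather than $\varepsilon/2$ in the application of Theorem \ref{th:trungeneral}, bound $N_\psi(D_j,r)$ by $T_{D_j,\psi}(r)$ rather than jumping directly to $T_{D_p,\psi}(r)$, and rearrange the final algebra; also your remark that $m\geq Q$ is needed to compare truncated and untruncated multiplicities is unnecessary (the pointwise inequality $\min(Q,n)\leq \frac{Q}{m_j}n$ for $n\geq m_j$ holds for all $m_j\geq 1$), though you do need $m>Q$ in the last step anyway.
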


	 \begin{proof}
		 First we note that condition (1) implies that $D_1,\hdots,D_q$ are in general position. Let $f: \C \to (X,\Delta)$ be a Zariski dense orbifold entire curve.
	As before, we denote by $\beta_i = \beta(D_p,D_i)$ and we set
	\begin{align*}
	\varepsilon=\min\{(\beta_i-p_i)/p_i\}
	\end{align*}
	which is positive by Lemma \ref{lemma:pi}.
	By Theorem \ref{th:trungeneral}, applied with $\calL = \calO_X(D_p)$, and $\varepsilon/2$, there exists an integer $Q$ such that the following inequality holds:
\begin{align}\label{eq:main_nev}
	\sum_{j=1}^q\beta_j T_{D_j,f}(r)-  \left(1+\dfrac{\varepsilon}{2}\right)T_{D_p,f}(r)\leq_{\operatorname{exc}} \sum_{j=1}^q \beta_j N_f^{(Q)}(D_j,r).
\end{align}

	The analogue of Lemma \ref{lemma:epsilon} in the Nevanlinna setting gives the lower bound
	\begin{align*}
		\sum_{j=1}^q \beta_i T_{D_i,f}(r)> (1+\varepsilon) T_{D_p,f}(r).
	\end{align*}
	Together with \eqref{eq:main_nev}, this implies that 
	\begin{equation}
		\frac  {\epsilon}2 T_{D_p,f}(r) \leq \sum_{j=1}^q \beta_j N_f^{(Q)}(D_j,r).
		\label{eq:nev_2}
	\end{equation}
	Since $f$ is an orbifold entire curve, one has 
\[
N_f^{(Q)}(D_j,r) \leq \frac{Q}{m_j} N_f(D_j,r) \leq \frac{Q}{m_j} T_{D_p,f}(r).
\]
Therefore, we can rewrite equation \eqref{eq:nev_2}, as
\begin{equation*}
	\frac{\epsilon}{2} T_{D_p,f}(r) \leq Q  \left( \sum_{j=1}^q \frac{\beta_j}{m_j} \right) T_{D_p,f}(r) \leq \frac{Q}{m} \left( \sum_{j=1}^q \beta_j \right) T_{D_p,f}(r).
\end{equation*}
	
To conclude, it is enough to choose $m$ big enough, such that 
\[
	\frac{Q}{m} \left( \sum_{j=1}^q \beta_j \right) < \frac{\varepsilon}{2}.
\]
	 \end{proof}

	 As a corollary we obtain the analogue of \cite[Main Theorem]{CZAnnals} and Theorem \ref{th:CZ_ff} in the Nevanlinna setting.

	\begin{corollary}
	  Let $(X,D)$ be as in Theorem \ref{th:CZ_nev}. Then, every entire curve $\psi: \C \to X\setminus D$, is algebraically degenerate. Moreover $(X,D)$ is Brody hyperbolic modulo the exceptional set $Z$ of Theorem \ref{th:CZ_ff}, i.e. every entire curve $\psi: \C \to X\setminus D$ verifies $\psi(\C)$ is contained in $Z$.
	 \end{corollary}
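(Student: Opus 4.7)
The plan is to combine the holomorphic Ru--Vojta estimate of Theorem \ref{th:CZ_nev} with the description of the exceptional set $Z$ already constructed in the proof of Theorem \ref{th:CZ_ff}. Essentially, the corollary packages together two results that have already been done; the task is to make the translation explicit.

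For the first assertion, I would observe that any entire curve $\psi\colon\C\to X\setminus D$ satisfies $\psi^{-1}(\Supp D)=\emptyset$, hence $\psi^{*}D_i=0$ for every $i$. The defining condition of an orbifold morphism is then vacuously satisfied, so $\psi$ may be regarded as an orbifold entire curve $\psi\colon\C\to(X,\Delta_m)$ with orbifold divisor $\Delta_m=\sum_{i=1}^{q}(1-1/m)D_i$ for \emph{any} integer $m\ge 1$. Choosing $m$ at least as large as the bound furnished by Theorem \ref{th:CZ_nev}, that theorem applies directly and yields the algebraic degeneracy of $\psi$.

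For the Brody hyperbolicity statement, the idea is that the exceptional set $Z$ was chosen in the proof of Theorem \ref{th:CZ_ff} as the subvariety produced by \cite[Theorem 8.3 B]{Levin}, after verifying (as in the proof of the main theorem of \cite{CZAnnals}) that $D_p$ is \emph{large} in the sense of Levin. Since \cite[Theorem 8.3 B]{Levin} is at its heart a Nevanlinna-type statement about holomorphic maps $\C\to X\setminus\Supp D_p=X\setminus D$, it already provides a proper Zariski-closed $Z\subset X$ containing the image of every such entire curve. Applying this to our $\psi$ gives $\psi(\C)\subset Z$, which is exactly Brody hyperbolicity modulo $Z$.

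The genuine content has been absorbed into the earlier parts of the paper: Theorem \ref{th:CZ_nev} is driven by the truncated analytic Ru--Vojta estimate of Theorem \ref{th:trungeneral}, while the construction of $Z$ is Levin's. The only subtlety I anticipate is checking that the same $Z$ serves simultaneously for the function field statement and for the holomorphic statement; this should be automatic because Levin's construction of $Z$ is purely algebro-geometric, expressed in terms of the geometry of the components $D_i$ and their intersection behaviour, and does not depend on the base field or on whether one considers sections of a fibration or entire curves.
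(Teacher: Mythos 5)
Your proposal is correct and mirrors the paper's own (very terse) proof: the paper asserts the first statement follows directly from Theorem~\ref{th:CZ_nev} — your explicit observation that $\psi^{*}D_i=0$ makes $\psi$ an orbifold curve for every orbifold structure $\Delta_m$ is exactly the unstated justification — and for the second statement both you and the paper invoke Levin's Theorem~8.3~B after noting (via the computations from \cite{CZAnnals}) that $D_p$ is large in Levin's sense.
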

	 \begin{proof}
	   The first statement follows directly by Theorem \ref{th:CZ_nev}. For the second statement one can apply directly \cite[Theorem 8.3 B]{Levin} to our setting (since, as noted before, the proof of \cite[Main Theorem]{CZAnnals} shows that the divisor $D_p$ is \emph{large} in the sense of Levin).
	 \end{proof}

\section{Degeneracy properties of \texorpdfstring{$X_m$}{Xm}}\label{EC}

Given Lang and Vojta's dictionary between arithmetic and geometry properties of a variety $X$, it is expected that varieties with a potentially dense set of rational points should correspond to manifolds admitting Zariski dense entire curves. Therefore the analogue of Conjecture \ref{ACT} should imply that weakly special manifolds should admit such curves (see Conjecture \ref{ACT-ff}).

Campana and P\u{a}un \cite{CP} have shown that some examples constructed in \cite{BT} (in particular with \(\kappa(S)=1\)) give counterexamples to such a statement. In other words, there are some weakly special manifolds in which all entire curves are algebraically degenerate. The goal of this section is to show that one can produce many more ``counterexamples'' from the examples given in the previous section. In particular we show that the weakly special varieties $X_m$ (for $m$ big enough) provide counterexamples to Conjecture \ref{ACT-ff}.

\subsection{Construction}

Following ideas of Corvaja and Zannier in \cite{CoZa10} we begin by giving a series of examples in which the results of Section \ref{sec:vojta} apply. As in Section \ref{sec:vojta} we denote by $\kappa$ an algebraically closed field of characteristic 0. We start with the following definition.

\begin{definition}
  \label{def:gen_position}
  Let $D_1, D_2$ and $D_3$, be three projective plane curve in $\PP^2_\kappa$ and let $D_4 = H$ be an hyperplane such that $D_i\cap D_j \cap D_k$ is empty for every distinct $i,j,k$.
  Given three distinct projective curves $B_1,B_2,B_3$ we say that $(D_1,B_1),(D_2,B_2),(D_3,B_3)$ and $H$ are in \emph{general position} if 
  \begin{itemize}
	  \item $D_i$ and $B_i$ intersect transversally for every $i=1,2,3$;
	  \item $D_i \cap D_j \cap B_h = \emptyset$, for every distinct $i, j$ and $h\in\{i,j\}$.
  \end{itemize}

\end{definition}

The main source of examples will be certain blow-ups of $\PP^2$. In the next proposition we show that these satisfy the assumptions of Theorem \ref{th:CZ_ff} and Theorem \ref{th:CZ_nev}.

\begin{proposition}
	\label{prop:blow_up}
	Let $(D_1,B_1),(D_2,B_2),(D_3,B_3)$ and $H$ be curves in general position as in Definition \ref{def:gen_position} such that $\deg D_i \geq \deg B_i$. Let $T$ be the set of points $D_i \cap B_i$ for $i=1,2,3$; if $\# (T \cap D_i) < \deg D_i^2$ we add to $T$ smooth points of $D_i$ so that $\# T = \deg D_1^2 + \deg D_2^2 + \deg D_3^2$.

	Let $X$ be the blow up $\PP^2$ along $T$, and denote by $\widetilde{D}_i$ the strict transform of $D_i$, and by $\widetilde{H}$ the strict transform of $H$. Let $\Delta$ be the $\Q$-divisor defined as
	\[
		\Delta = \sum_{i=1}^3 \left( 1 - \frac{1}{m_i} \right) \widetilde{D}_i + \left( 1 - \frac{1}{m_4} \right) \widetilde{H}.
		\]
		Then, there exists a positive integer $m > 0$ such that, if $m_i > m$, we have 
		\begin{enumerate}
			\item $(X,\Delta)$ is pseudo algebraically hyperbolic;
			\item if $\kappa = \C$, every orbifold entire curve $f: \C \to (X,\Delta)$ is degenerate.
		\end{enumerate}
\end{proposition}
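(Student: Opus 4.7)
The plan is to verify the hypotheses of Corollary \ref{cor:CZ_orb} and Theorem \ref{th:CZ_nev} for the orbifold $(X,\Delta)$ with boundary divisor $D=\widetilde{D}_1+\widetilde{D}_2+\widetilde{D}_3+\widetilde{H}$; once this is done, conclusions (1) and (2) of the proposition follow immediately by applying these two results for $m$ large enough. Concretely I must show that (i) no three components of $D$ meet at a point of $X$, and (ii) there exist positive integers $p_1,\dots,p_4$ making $D_p:=p_1\widetilde{D}_1+p_2\widetilde{D}_2+p_3\widetilde{D}_3+p_4\widetilde{H}$ ample and satisfying the numerical inequality of the cited statements for each $i$.

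For (i) I split the analysis according to whether an alleged triple intersection lies on the exceptional divisor of $\sigma:X\to\PP^2$. Off the exceptional divisor, a triple intersection would descend to one among $\{D_1,D_2,D_3,H\}$ in $\PP^2$, excluded by general position. Over a point $Q\in T$, the hypothesis $D_i\cap H\cap B_i=\emptyset$ (together with the freedom to place the added smooth points of each $D_i$ off $H$) ensures $H$ avoids $Q$, so $\widetilde{H}=\sigma^{*}H$ misses every exceptional fiber, and a triple intersection on $E_Q$ would then force three of the $D_i$ to meet at $Q$, contradicting $D_1\cap D_2\cap D_3=\emptyset$. For (ii), the key intersection numbers on $X$ are: $\widetilde{D}_i^2=d_i^2-\#(T\cap D_i)=0$ by the chosen size of $T$, $\widetilde{D}_i\cdot\widetilde{D}_j=d_id_j$ for distinct $i,j\in\{1,2,3\}$ (using the remaining general position to guarantee no blow-up point lies on two distinct $D_j$'s), $\widetilde{D}_i\cdot\widetilde{H}=d_i$ and $\widetilde{H}^2=1$. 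Because $\widetilde{D}_i^2=0$, the quadratic defining $\xi_i$ degenerates to a linear equation giving $\xi_i=D_p^2/\bigl(2(D_p\cdot\widetilde{D}_i)\bigr)$, and a short manipulation reduces the desired inequality for $i\leq 3$ to the cleaner form $D_p^2>4p_i(D_p\cdot\widetilde{D}_i)$. For $i=4$, the full quadratic has real roots by the Hodge index theorem and one similarly reduces to an inequality for $\xi_4=(D_p\cdot\widetilde{H})-\sqrt{(D_p\cdot\widetilde{H})^2-D_p^2}$.

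The main obstacle is the simultaneous choice of weights satisfying both ampleness and all four inequalities. Ampleness of $D_p$ will be checked via Nakai-Moishezon: each $\widetilde{D}_i$ and $\widetilde{H}$ is nef (being irreducible with non-negative self-intersection), $D_p^2>0$ for suitable weights, and $D_p\cdot E_P=\sum_{i:P\in D_i}p_i>0$ on every exceptional divisor. To satisfy all four inequalities I plan to set $p_i=a_iN$ with positive rational $a_i$ normalized so that $a_1d_1=a_2d_2=a_3d_3=:t$, pick $a_4$ in the range $(\sqrt{3}-1)t<a_4<2(3-\sqrt{3})t$ (for instance $a_4=t$), and take $N$ large enough to clear denominators. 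A direct asymptotic analysis in $N$ — using the identity $D_p^2=(D_p\cdot\widetilde{H})^2-N^2\sum_{j=1}^{3}(a_jd_j)^2$ to handle the case $i=4$ — then confirms all four inequalities. Once (i) and (ii) are established, Corollary \ref{cor:CZ_orb} yields pseudo algebraic hyperbolicity of $(X,\Delta)$ for $m_i$ large enough, and when $\kappa=\C$, Theorem \ref{th:CZ_nev} yields algebraic degeneracy of every orbifold entire curve $f\colon\C\to(X,\Delta)$.
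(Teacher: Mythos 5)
Your overall strategy is the same as the paper's: compute the intersection numbers $\widetilde{D}_i^2 = 0$, $\widetilde{D}_i\cdot\widetilde{D}_j = d_id_j$, $\widetilde{D}_i\cdot\widetilde{H} = d_i$, $\widetilde{H}^2 = 1$, verify that $\widetilde{D}_1,\dots,\widetilde{D}_3,\widetilde{H}$ are in general position on $X$, and pick weights $p_i$ so that $D_p$ is ample and the numerical inequality of Theorem~\ref{th:CZ_ff}/\ref{th:CZ_nev} holds, so that Corollary~\ref{cor:CZ_orb} and Theorem~\ref{th:CZ_nev} finish the argument. Your reduction for $i\le 3$ to $D_p^2 > 4p_i(D_p\cdot\widetilde{D}_i)$, your general-position check on $X$, and your Nakai--Moishezon ampleness argument (using nefness of $\widetilde{D}_i$ and $\widetilde{H}$, plus positivity against exceptional curves) are all sound.

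The gap is in the case $i=4$: your claimed admissible range for $a_4$, and the example $a_4=t$, do not satisfy the required inequality. Normalize $a_jd_j=t$ for $j\le 3$ and set $s=a_4/t$. Then $D_p^2=(6+6s+s^2)t^2$, $D_p\cdot\widetilde{H}=(3+s)t$, $p_4=st$, and $\xi_4=\bigl(3+s-\sqrt{3}\bigr)t$. Writing $B=3+s$ and dividing the inequality $2D_p^2\xi_4>(D_p\cdot\widetilde{H})\xi_4^2+3D_p^2p_4$ by the positive factor $B-\sqrt{3}$ yields
\[
2B^2+(2\sqrt{3}-9)B+(6-9\sqrt{3})<0,
\]
which together with $B>3$ forces $3<B<\tfrac{9-2\sqrt{3}+\sqrt{45+36\sqrt{3}}}{4}\approx 3.97$, i.e.\ $0<s<0.974$. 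Combined with the constraint $s>\sqrt{3}-1$ from $i\le 3$, the admissible window is approximately $(0.732,\,0.974)$, not $(\sqrt{3}-1,\,2(3-\sqrt{3}))\approx(0.732,\,2.536)$ as you asserted. In particular $s=1$ (your $a_4=t$) fails: with $t=1$ the left-hand side is $2\cdot 13\cdot(4-\sqrt{3})=104-26\sqrt{3}\approx 58.97$, while the right-hand side is $4(4-\sqrt{3})^2+39=115-32\sqrt{3}\approx 59.57$, so the inequality is reversed. The paper's choice corresponds to $s=3/4$ (namely $p_i=c/d_i$ with $c=4d_1d_2d_3$ and $p_4=3c/4$), which lies in the true window and yields $D_p^2=177d^2>176d^2=4p_i(D_p\cdot\widetilde{D}_i)$ for $i\le 3$ and $354(15-4\sqrt{3})>15(15-4\sqrt{3})^2+1593$ for $i=4$. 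Finally, the ``asymptotic analysis in $N$'' cannot rescue a bad ratio: both sides of the inequality are homogeneous of degree $3$ in the weights, so rescaling by $N$ changes nothing, and $N$ only serves to make the $p_i$ integral.
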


	 \begin{proof}[Proof of Proposition \ref{prop:blow_up}]
		 Let $d_i = \deg D_i$. By definition, we have the following intersection numbers in $X$: $\widetilde{D}_i \cdot \widetilde{D}_j = d_i d_j$, $\widetilde{D}_i^2 = 0$ and $\widetilde{D}_i \cdot \widetilde{H} = d_i$. The result will follow from an application of Theorem \ref{th:CZ_ff}.
	Let $c := 4 d_1 d_2 d_3$ and define $p_i := c/d_i$ for $i=1,2,3$,  and $p_4 := 3c/4 = 3 d_1 d_2 d_3$. Using the integers $p_1,p_2,p_3$ and $p_4$ we define
\begin{equation*}
	D_p = p_1 \widetilde{D}_1 + p_2 \widetilde{D}_2 + p_3 \widetilde{D}_3 + p_4 \widetilde{H}.
	\label{eq:D_p}
\end{equation*}
Then, it is immediate to verify that $D_p \cdot \widetilde{H} > 0$ and $D_p \cdot E > 0$ where $E$ is any exceptional divisor. Now let $C$ be a non-exceptional curve in $X$. Then we can compute
\begin{equation}
  D_p \cdot C = \left(\sum_{i=1}^3 p_i d_i + p_4 \right) H \cdot \pi_* C - \sum_{i=1}^3 p_i \sum_{Q \in D_i \cap T} \mult_{Q}(\pi_* C).
	\label{eq:inters}
\end{equation}
Then, denoting $c = \deg \pi_* C$, Bezout's Theorem implies that $\sum_{Q \in D_i \cap T}\mult_{Q}(\pi_* C) \leq c d_i$ and therefore we can rewrite equation \eqref{eq:inters} as
\[
	D_p \cdot C \geq c \left(\sum_{i=1}^3 p_i d_i + p_4\right) - \sum_{i=1}^3 p_i (c d_i) = cp_4 > 0.
\]
Finally, since $D_p^2 >0$, by the Nakai-Moishezon criterion \cite[Theorem 1.2.23]{Lazarsfeld1}, the divisor $D_p$ is ample.

	Let $i=1,2,3$, and let $\xi_i$ be the smallest solution of
	\begin{equation}
		\widetilde{D}_i^2 x^2 - 2(D_p \cdot \widetilde{D}_i) x + D_p^2 = 0.
		\label{eq:def_xi}
	\end{equation}
	Using the fact that $\widetilde{D}_i^2 = 0$ for $i=1,2,3$ this gives the following expression
	\[
		\xi_i = \dfrac{D_p^2}{2(D_p \cdot D_i)}.
	\]
	Using this expression, in order to apply Theorem \ref{th:CZ_ff} and Theorem \ref{th:CZ_nev}, we have to verify that the following inequality holds true:
	\begin{equation}
		2 D_p^2 \cdot \dfrac{D_p^2}{2(D_p \cdot \widetilde{D}_i)} > (D_p \cdot \widetilde{D}_i) \cdot \dfrac{D_p^4}{4(D_p \cdot \widetilde{D}_i)^2} + 3 D_p^2 \cdot p_i
		\label{eq:main_ineq}
	\end{equation}
	This simplifies to $D_p^2 > 4p_i(D_p \cdot \widetilde{D}_i)$.
Using the definition of $D_p$ and $p_i$ we can compute $D_p^2 = 177 d^2$ where $d = d_1 d_2 d_3$, and $D_p \cdot \widetilde{D}_i/d_i = 11 d$. In particular $4p_i(D_p \cdot \widetilde{D}_i) = 176 d^2$ so that \eqref{eq:main_ineq} reads $177 d^2 > 176 d^2$ and therefore it is verified in the case in which $i=1,2,3$.\medskip

To conclude we need to verify that the same condition holds for $i=4$. In this case, using that $D_p \cdot \widetilde{H} = 15 d$, equation \eqref{eq:def_xi} becomes
\begin{equation*}
	x^2 - 30 d\, x + 177 d^2 = 0
	\label{eq:def_xi_4}
\end{equation*}
A direct computation shows that the smallest solution of the equation is $\xi = d(15-4\sqrt{3})$. Then, inequality \eqref{eq:main_ineq} becomes
	\[
		2 \cdot 177 \cdot (15 - 4 \sqrt{3}) > 15 \cdot (15 - 4\sqrt{3})^2 + 9 \cdot 177
	\]
that is a true statement, thus concluding the verification of the hypotheses of Theorem \ref{th:CZ_ff} and Theorem \ref{th:CZ_nev} which imply the desired conclusion.
\end{proof}

\begin{remark}
	We note that the orbifold $(X,\Delta)$ will be of general type as soon as the $m_i$'s are big enough.
	Moreover, in the case in which all the multiplicities $m_i = \infty$, we recover the analogue of \cite[Proposition 2]{CoZa10} in the function field and Nevanlinna case.
\end{remark}

\subsection{Weakly-Special Threefolds}

In order to prove degeneracy results for the threefolds $X_m$ constructed in Section \ref{wspecial}, we will use Proposition \ref{prop:blow_up} in a special case, that guarantees that the quasi-projective surface $S$ is not of log general type but the orbifold $(S,\Delta)$ is of \emph{orbifold} general type.

Let $D:=D_1+D_2+D_3+D_4$ be a simple normal crossing divisor on $\PP^2$ where $D_1$ is a curve of degree $d$ and $D_2, D_3, D_4$ are lines. Let $F:=F_1+F_2+F_3$ be a simple normal crossing divisor on $\PP^2$ where $F_1$ is a curve of degree $d$ and $F_2, F_3$ are lines and such that $F+D$ is a simple normal crossing divisor (so that $(D_i,F_i)$ for $i=1,2,3$ and $D_4$ are in general position in the sense of Definition \ref{def:gen_position}). Let $T_i:=D_i \cap F_i$, $1 \leq i \leq 3$ and $T=T_1\cup T_2 \cup T_3$. 
Let $\pi: \overline{S} \to \PP^2$ be the blow-up of $\PP^2$ in $T$ and $\widetilde{D}_i = \pi_*^{-1}D_i$ the strict transform of $D_i$ for $i=1,\dots,4$. In order to construct the fibration $g$ of Example \ref{ex:2fib} we consider the quasi-projective surface $S$ given by $\overline{S} \setminus \widetilde{D}_2 \cup \widetilde{D}_3 \cup \widetilde{D}_4$ and the fibration $g: S \to \PP^1$ to be the fibration induced by $D_1$ and $F_1$. Note that for any integer $m\geq 2$, the divisor $K_{\PP^2}+(1-1/m)D_1+D_2+D_3+D_4$ is big, which implies in particular that the orbifold surface $(S, (1-1/m)\widetilde{D}_1)$ is an orbifold of general type. Moreover $S \setminus \widetilde{D}_1$ is simply connected (by the same argument as in Example \ref{ex:2fib}).

\begin{remark}
  The quasi projective surface $\overline{S} \setminus \widetilde{D}_1 \cup \widetilde{D}_2 \cup \widetilde{D}_3 \cup \widetilde{D}_4$ appears in \cite[Theorem 3]{CoZa10} to produce an example of a simply-connected quasi-projective surface with non Zariski-dense set of integral points. The idea of our construction is to consider instead the \emph{orbifold} surface $(\overline{S}, (1-1/m)\widetilde{D}_1+\widetilde{D}_2+\widetilde{D}_3+\widetilde{D}_4),$ and prove the degeneracy of orbifold entire curves. Unfortunately, it seems that the arithmetic orbifold analogue is out of reach with the present methods.
\end{remark}

Given $f$ as in Example \ref{ex:1fib} we can construct the smooth threefold $X_m$ as in Theorem \ref{tbt}: it is a weakly special threefold that is not special. We denote by $\pi : X_m \to S$ the elliptic fibration induced. Note that by construction the orbifold base (see Definition \ref{dob}) of $\pi$ coincides with  $\Delta_\pi = ( 1 - 1/m) \widetilde{D}_1$.  Then, the following theorem proves degeneracy results analogues of Conjecture \ref{conj:main_ff} and the analytic part of Conjecture \ref{conj:Camp} for the \emph{quasi-projective} threefold $X_m$.

\begin{theorem}\label{deg}
	There exists $m_0$ such that for all $m \geq m_0$, the following holds:
	\begin{enumerate}
	  \item there exists a positive constant $A$ such that, for every morphism $\varphi: \calC \to \overline{X}_m$ such that $\pi(\varphi(\calC))$ is not contained in the exceptional set of Proposition \ref{prop:blow_up}, the following holds:
			\[
			  \deg \pi(\varphi(\calC))\leq A \left( 2g(\calC) - 2 + N_{\pi \circ \varphi}^{[1]}(\widetilde{D}_2+\widetilde{D}_3+\widetilde{D}_4)\right).
			\]
		\item every entire curve $f: \C \to X_m$ is algebraically degenerate.
	\end{enumerate}
	In particular, the threefolds $X_m$ give counterexamples to (the logarithmic analogue of) Conjecture \ref{ACT-ff}.
	
\end{theorem}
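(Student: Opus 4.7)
The plan is to reduce both statements to Proposition \ref{prop:blow_up} applied to the orbifold surface $(\overline{S},\Delta)$ with
\[
	\Delta = \Bigl(1-\tfrac{1}{m}\Bigr)\widetilde{D}_1 + \widetilde{D}_2 + \widetilde{D}_3 + \widetilde{D}_4,
\]
in which $\widetilde{D}_2,\widetilde{D}_3,\widetilde{D}_4$ carry the logarithmic (multiplicity-$\infty$) structure. The configuration of Section \ref{EC} matches exactly the hypotheses of Proposition \ref{prop:blow_up}; combining its function field version (Theorem \ref{th:CZ_ff}) with Lemma \ref{lemma:orbif_ff} to accommodate the finite multiplicity $m$ on $\widetilde{D}_1$, and with the analytic version (Theorem \ref{th:CZ_nev}) for the entire-curve statement, one extracts an integer $m_0$ such that for every $m\geq m_0$ the orbifold $(\overline{S},\Delta)$ is pseudo algebraically hyperbolic outside a proper closed subset $Z\subsetneq \overline{S}$, and every orbifold entire curve $\C\to(\overline{S},\Delta)$ is algebraically degenerate.

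The bridge between the threefold $\overline{X}_m$ and its orbifold base will be composition with $\pi$, and the crucial geometric input is the multiple-fibre structure of $T\to\PP^1$: by Example \ref{ex:1fib}, the fibre of $T\to\PP^1$ above the point corresponding to $\widetilde{D}_1$ is $m\cdot T_0$, so the normalized fibre product $\overline{X}_m=(\overline{S}\times_{\PP^1}T)^\nu$ inherits the relation $\pi^{\ast}\widetilde{D}_1=m\cdot E$ for some reduced divisor $E$. I will use this to show that for any morphism $\varphi:\calC\to\overline{X}_m$ with $\varphi(\calC)\not\subseteq E$, each component of $(\pi\circ\varphi)^{-1}(\widetilde{D}_1)$ occurs with multiplicity divisible by $m$ in $(\pi\circ\varphi)^{\ast}\widetilde{D}_1$; equivalently, the minimal orbifold structure $\Delta_\calC$ on $\calC$ making $\pi\circ\varphi$ an orbifold morphism to $(\overline{S},\Delta)$ has trivial contribution above $\widetilde{D}_1$, so that
\[
	\deg \Delta_\calC = N^{[1]}_{\pi\circ\varphi}(\widetilde{D}_2+\widetilde{D}_3+\widetilde{D}_4).
\]

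For (1), I will feed $\pi\circ\varphi$ into the pseudo algebraic hyperbolicity inequality of Proposition \ref{prop:blow_up}(1), after enlarging the exceptional locus in $\overline{X}_m$ to absorb $\pi^{-1}(Z)$, $E$, and any $\pi$-contracted curve; the resulting bound $h_\calL(\pi\circ\varphi)\le\alpha(2g(\calC)-2+\deg\Delta_\calC)$ combined with the identity above yields the claimed estimate on $\deg \pi(\varphi(\calC))$. For (2), any non-constant entire curve $f:\C\to X_m$ automatically omits $\pi^{-1}(\widetilde{D}_2\cup\widetilde{D}_3\cup\widetilde{D}_4)$ since $X_m$ is the preimage of $S$, and by the same multiplicity argument $\pi\circ f$ is an orbifold entire curve to $(\overline{S},\Delta)$; the degeneracy output of Proposition \ref{prop:blow_up}(2) then forces $\pi(f(\C))$ into a proper closed subvariety of $\overline{S}$, so $f(\C)$ lies in its $\pi$-preimage and is degenerate in $X_m$ (the case where $\pi\circ f$ is constant being trivial since $f(\C)$ would then sit in a single elliptic fibre). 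The main obstacle I anticipate is the local multiplicity computation for $\pi^{\ast}\widetilde{D}_1$ on $\overline{X}_m$, which rests on the interaction of the scheme-theoretic multiple fibre $m\cdot T_0$ with the fibre product and its normalization; once this is carried out cleanly near the generic point of $\widetilde{D}_1$, the rest of the argument is a direct invocation of the orbifold statements on the surface.
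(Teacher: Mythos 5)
Your plan takes essentially the same route as the paper: compose with $\pi$, use the multiple-fibre relation $\pi^*\widetilde{D}_1 = m\cdot E$ to see that $\pi\circ\varphi$ (resp.\ $\pi\circ f$) is an orbifold morphism to $(\overline{S},\Delta)$, and then invoke Proposition \ref{prop:blow_up}. The only difference is that the paper dismisses the orbifold-morphism verification with the words ``by construction,'' whereas you spell out the local multiplicity computation at $\widetilde{D}_1$; this is a correct and useful unpacking of the same argument, not a different proof.
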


\begin{proof}
  We first prove the statement in the function field case. Let $\varphi$ be a morphism as above and consider $\pi \circ \varphi: \calC \to \overline{S}$. By construction, this induces an orbifold morphism 
\[
	\left(\calC, \varphi^{-1}(\widetilde{D}_2+\widetilde{D}_3+\widetilde{D}_4)\right) \to \left(\overline{S}, \left(1-\dfrac{1}{m}\right)\widetilde{D}_1+\widetilde{D}_2+\widetilde{D}_3+\widetilde{D}_4\right).
\]
Then the conclusion follows from Proposition \ref{prop:blow_up}.
	Similarly, in the Nevanlinna case, one considers an entire curve $f: \C \to X_m$: the composition $\pi \circ f$ is an orbifold entire curve to $(\overline{S}, (1-1/m)\widetilde{D}_1+\widetilde{D}_2+\widetilde{D}_3+\widetilde{D}_4).$ Then the conclusion follows again from Proposition \ref{prop:blow_up}.
\end{proof}

\section{The Ru-Vojta Method}\label{sec:SMT}
The purpose of this section is to obtain a truncated version of Ru-Vojta's Theorem \cite[General Theorem (Analytic Part)]{RV19}, as well as its analogue in the function field setting.  
 \subsection{Nevanlinna Theory}
  We first recall some definitions in Nevanlinna theory.
  Let $D$ be an effective Cartier divisor on a complex variety $X$.  Let  $s=1_D$ be a canonical section of $\calO(D)$, i.e. a global section for which $(s)=D$, and fix a smooth metric $|\cdot|$
on $\calO(D)$.  The associated Weil function $ \lambda_D: X(\C)\setminus\Supp D \to \R$ is given by
\begin{equation*} 
  \lambda_D(x) := -\log|s(x)|\;.
\end{equation*}
It is linear in $D$ (over a suitable domain), so by linearity and continuity
it can be defined for a general Cartier divisor
$D$ on $X$.

Let $f:\C\to X$ be a holomorphic map whose image is not contained in the support of divisor $D$ on $X$.
The {\it proximity function} of $f$ with respect to $D$ is defined by 
\[
  m_f(D,r)=\int_0^{2\pi}\lambda_D(f (re^{i\theta}))\frac{d\theta}{2\pi}.
\]
Let $n_f(D,t)$ (respectively, $n^{(Q)}_f(D,t)$) be the number of zeros of $\rho\circ f$ inside $\{|z|<t\}$, counting multiplicity, (respectively, ignoring multiplicity larger than $Q\in\mathbb N$) with $\rho$ a local equation of $D$.
The  {\it counting function} and the {\it truncated counting function} of $f$ of order $Q$ at $\infty$  are  defined, respectively, by
\[
  N_f(D,r) = \int_1^r \dfrac{n_f(D,t) }{t} dt \qquad \text{and} \qquad N^{(Q)}_f(D,r) = \int_1^r \dfrac{n^Q_f(D,t) }{t} dt.
\]
        The height function relative to $D$ is defined, up to $O(1)$, as 
\begin{align}\label{FMT}
T_{D,f}(r)=m_f(D,r)+N_f(D,r).
\end{align}
 
The following is a truncated version of the analytic part \cite[General Theorem]{RV19}.
 
 \begin{theorem}\label{trungeneral}
    Let $X$ be a complex projective variety of dimension $n$, let $D_1,\cdots,D_q$ be effective Cartier divisors intersecting properly on  $X$, and let $\calL$ be a big line sheaf.  For each $\varepsilon>0$,  there exists a positive integer $Q$  such that for any algebraically non-degenerate holomorphic map $f:\C\to X$,  the inequality 
\begin{align*}
    \sum_{j=1}^q\beta(\calL, D_j)T_{D_j,f}(r)-  (1+\varepsilon)T_{\calL,f}(r)\leq_{\operatorname{exc}} \sum_{j=1}^q \beta(\calL, D_j)N_f^{(Q)}(D_j,r) 
\end{align*}
    holds,  where $\leq_{\operatorname{exc}}$ means the inequality holds for all $r\in \R^+$ except a set of finite Lebesgue measure.
\end{theorem}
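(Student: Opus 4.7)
My plan is to revisit the proof of the analytic part of the Ru--Vojta General Theorem \cite{RV19} and to track the truncation explicitly through the reduction to Cartan's Second Main Theorem, which is naturally equipped with a truncation at the level of the ambient projective dimension. Fix $\varepsilon > 0$ and choose an integer $N$ large enough so that, for every $j$,
\[
\frac{1}{N h^0(\calL^N)} \sum_{m\geq 1} h^0(\calL^N(-mD_j)) \geq \left(1 - \frac{\varepsilon}{4}\right) \beta(\calL, D_j).
\]
Set $M := h^0(\calL^N)$. The Ru--Vojta filtration construction then produces, for each point $x\in X$ through which at most $n = \dim X$ components $D_j$ pass (by the proper intersection hypothesis), a basis $s_1,\dots, s_M$ of $H^0(X, \calL^N)$ adapted to the filtration by vanishing order along these $D_j$, satisfying
\[
\sum_{i=1}^M \ord_{D_j}(s_i) \geq \left(\beta(\calL, D_j) - \frac{\varepsilon}{4}\right) N M
\]
for every $D_j$ through $x$. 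Finitely many such bases cover all strata of the stratification defined by the $D_j$, and they glue to yield a rational map $\Phi\colon X \to \PP^{M-1}$ such that $\Phi \circ f$ is linearly non-degenerate, by the algebraic non-degeneracy of $f$.

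Next, I would apply Cartan's Second Main Theorem with truncation of order $M-1$ to $\Phi\circ f$ and to the hyperplanes $H_i = \{s_i = 0\}$: this yields
\[
\sum_{i=1}^M m_{H_i,\Phi\circ f}(r) \leq_{\operatorname{exc}} M\, T_{\Phi\circ f}(r) - N_W(r) + O(\log r + \log T_{\Phi\circ f}(r)),
\]
where $N_W$ is the Wronskian counting function, which precisely controls the non-truncated part of $\sum_i N_{H_i,\Phi\circ f}(r)$. Using the filtration estimate on the left-hand side, the comparison $T_{\Phi\circ f}(r) \leq N\, T_{\calL,f}(r) + O(1)$ on the right, and the First Main Theorem \eqref{FMT}, one rearranges to obtain
\[
\sum_{j=1}^q \beta(\calL, D_j) T_{D_j,f}(r) - (1+\varepsilon) T_{\calL,f}(r) \leq_{\operatorname{exc}} \sum_{j=1}^q \beta(\calL, D_j) N_f^{(Q)}(D_j,r),
\]
with $Q$ of order $N(M-1)$, a quantity depending only on $X$, $\calL$, $D_1,\dots,D_q$ and $\varepsilon$.

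The main obstacle is the careful bookkeeping of the truncation level. Cartan's SMT delivers truncation at level $M-1$ for hyperplanes in $\PP^{M-1}$, but each $D_j$ pulls back via $\Phi$ to a weighted union of hyperplanes $H_i$ with weights the vanishing orders $\ord_{D_j}(s_i) \leq N$; one must verify that the Wronskian term on the right-hand side produces a genuine truncated counting function $N_f^{(Q)}(D_j,r)$ with $Q = N(M-1)$ for each $j$, rather than an unstructured bound on a difference of counting functions. Uniformity of $Q$ across all $f$ is then automatic since $N$ and $M$ depend only on the geometric data. This approach parallels the arithmetic truncation arguments in \cite{Levin} and \cite{CZAnnals}, transported to the Nevanlinna setting by replacing the Schmidt Subspace Theorem with its analytic counterpart, Cartan's theorem; the proof of the function-field version (Theorem \ref{thm:ffconst} / Theorem \ref{th:ffconst_b}) will proceed along exactly the same scheme, substituting \cite[Theorem 1]{Wang} for Cartan's SMT.
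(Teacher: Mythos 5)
Your overall scheme matches the paper's: run the Ru--Vojta filtration to build adapted bases of $H^0(X,\calL^N)$, feed the resulting rational map $\Phi$ into a Cartan-type Second Main Theorem with a Wronskian term, and then argue that the Wronskian absorbs the high-multiplicity part of the ramification, producing a truncation level $Q$ depending only on the geometric data. You also correctly flag the key remaining step: one must convert the global Wronskian bound into a genuine truncated counting function $N_f^{(Q)}(D_j,r)$ for each $D_j$.

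Where the proposal has a genuine gap is that $\Phi$ is only a \emph{rational} map, and you apply "Cartan's SMT" as if it were a morphism. Once you blow up to resolve the base locus, pull-backs of hyperplanes under $\Phi$ acquire a fixed component $B$, and $\Phi\circ f$ must be put in reduced form $\Psi = (h^{-1}\phi_1(f),\dots,h^{-1}\phi_M(f))$ where $h$ is a gcd of the $\phi_i(f)$. The plain truncated Cartan statement does not see $h$, and without the extra term $\dim V\cdot N_h(0,r)$ on the left side of the Second Main Theorem the pointwise Wronskian inequality fails: you would need
\[
v_{z_0}^+\!\bigl(W(\Psi)\bigr) \;\geq\; \sum_{j=1}^M v_{z_0}^+\!\bigl(s_j|_U\circ f\bigr) \;-\; M\,v_{z_0}(h)\;-\;\tfrac12 M(M-1),
\]
and the $-M\,v_{z_0}(h)$ is not absorbed by anything in your accounting. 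The paper handles this precisely by first proving Theorem \ref{gsmt2} (a Cartan-type inequality with Wronskian \emph{and} the $\dim V\cdot N_h$ term, obtained on the blow-up of $X$ along the base locus, following \cite{vojta1997,ru1997general}); the $N_h$ contribution then cancels the gcd loss in the pointwise Wronskian estimate and closes inequality \eqref{TN2}. Your proposal, as written, omits this correction and thus the truncation argument does not go through. A smaller inaccuracy: the truncation level $Q\sim N(M-1)$ you announce does not match the one that actually emerges, which is $Q=\tfrac{CM(M-1)}{2\min_j\beta_j}$ with $C=(1+\epsilon)(MN)^{-1}$, reflecting the precise Wronskian constant $\tfrac12 M(M-1)$ and the rescaling by $C$; this is worth repairing, but it is secondary to the gcd issue.
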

\begin{remark}
  When $X$ is a surface, the condition that $D_1,\hdots,D_q$ intersect properly on $X$ can be relaxed to a general position assumption, following the same strategy as in \cite[Main Theorem]{hussein-general}.
\end{remark} 
  
We recall the following theorem from \cite{GuoWang}, which is a modification of \cite[Theorem 3.2]{hussein-general}  by applying  the  general form of the second main theorem with a Wronskian term  proved by Vojta in \cite[Theorem 1]{vojta1997} and Ru in  \cite[Theorem 2.3]{ru1997general}. 
\begin{theorem}
\label{gsmt2}
    Let $X$ be a complex projective variety and let $D$ be a Cartier divisor on $X$, let $V$ be a nonzero linear subspace of $H^0(X,\mathcal{O}(D))$, let $s_1,\dots,s_q$ be nonzero elements of $V$ and for each $j=1,\dots,q$, let $D_j$ be the Cartier divisor $(s_j)$.

Let $\Phi=(\phi_1,\hdots,\phi_{d}):X\dashrightarrow \mathbb{P}^{d-1}$ be the rational map associated to the linear system $V$,  let $f:\mathbb{C}\to X$ be a holomorphic map with Zariski dense image and let $\Psi=(\psi_1,\hdots,\psi_{d}):\mathbb C\to   \mathbb{P}^{d-1}$ be a reduced form of $\Phi\circ f$, i.e.  $\Psi=\Phi\circ f$  and $\psi_1,\hdots,\psi_{d}$ are entire functions without common zeros.  Denote by $W(\Psi)$ the Wronskian of $\psi_1,\hdots,\psi_{d}$.  
    Then for any $\varepsilon>0$, 
    \[
      \int_0^{2\pi} \max_J \sum_{j\in J} \lambda_{D_j}\left(f(re^{i\theta})\right)\frac{d\theta}{2\pi}+N_{W(\Psi)}(0,r)+\operatorname{dim}V\cdot N_h(0,r)  \leq_{\operatorname{exc}} (\operatorname{dim}V+\varepsilon)T_{D,f}(r),
    \]
    where $J$ ranges over all subsets of $\{1,\dots,q\}$ such that the sections $(s_j)_{j\in J}$ are linearly independent, and $h$ is an an entire function such that $h\psi_i=\phi_i(f)$ for each $1\le i\le d$, i.e. $h$ is a gcd of $\phi_i(f)$, $1\le i\le d$. 
\end{theorem}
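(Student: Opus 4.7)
The plan is to deduce Theorem \ref{gsmt2} from the Cartan-type Second Main Theorem with Wronskian term on projective space, due to Vojta \cite[Theorem 1]{vojta1997} and Ru \cite[Theorem 2.3]{ru1997general}, applied to the holomorphic map $\Psi:\C\to\PP^{d-1}$ (with $d=\dim V$) that arises from the linear system $V$. The basis $\phi_1,\dots,\phi_d$ identifies each section $s_j=\sum_i a_{ji}\phi_i\in V$ with a linear form $L_j(x_1,\dots,x_d)=\sum_i a_{ji}x_i$ on $\PP^{d-1}$; a subset of the $s_j$'s is linearly independent in $V$ precisely when the corresponding $L_j$'s are linearly independent as forms. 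The defining property $\phi_i\circ f=h\cdot\psi_i$ then lifts to the algebraic identity
\[
s_j(f(z))=h(z)\cdot L_j(\Psi(z))
\]
on all of $\C$, which is the starting point for transferring the analytic data from $X$ to $\PP^{d-1}$.

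The heart of the argument is a clean translation of Weil functions, counting functions and heights. Fixing smooth hermitian metrics on $\calO(D)$ and on $\calO_{\PP^{d-1}}(1)$ and setting $\mu(z):=\bigl(\sum_i|\phi_i(f(z))|^2\bigr)^{1/2}$, a direct computation in local frames gives the pointwise, $j$-independent relation
\[
\lambda_{D_j}(f(z))-\lambda_{L_j}(\Psi(z))=-\log\mu(z)+O(1).
\]
Factoring $s_j(f)=h\cdot L_j(\Psi)$ gives $N_f(D_j,r)=N_h(0,r)+N_\Psi(L_j,r)$, and combining with the First Main Theorem on both sides produces the integrated identity
\[
\int_0^{2\pi}\log\mu(re^{i\theta})\,\frac{d\theta}{2\pi}=T_\Psi(r)-T_{D,f}(r)+N_h(0,r)+O(1).
\]
Before invoking the $\PP^{d-1}$ SMT I would verify linear non-degeneracy of $\Psi$: any relation $\sum c_i\psi_i\equiv 0$ would force $\sum c_i\phi_i$ to vanish identically on the Zariski-dense image $f(\C)$, contradicting the density hypothesis.

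Applying the Vojta--Ru general theorem with Wronskian to $\Psi$ and the hyperplanes $L_j$ yields, for any $\varepsilon>0$,
\[
\int_0^{2\pi}\max_J\sum_{j\in J}\lambda_{L_j}(\Psi(re^{i\theta}))\,\frac{d\theta}{2\pi}+N_{W(\Psi)}(0,r)\leq_{\operatorname{exc}}(d+\varepsilon)\,T_\Psi(r),
\]
with $J$ ranging over linearly independent subsets. Because the correction $-\log\mu$ is independent of $j$, the maximizing subset $J^\ast$ is the same on both sides, and after normalizing the metric on $\calO(D)$ so that $\log\mu\le 0$ (always possible by scaling) this maximizing set has cardinality $|J^\ast|=d$ whenever $q\geq d$. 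Substituting $\lambda_{L_j}(\Psi)=\lambda_{D_j}(f)+\log\mu$ in the $d$ summands and then using the integrated identity above transforms the inequality into
\[
\int\max_J\sum_{j\in J}\lambda_{D_j}(f)\,\frac{d\theta}{2\pi}+N_{W(\Psi)}(0,r)+d\,N_h(0,r)\leq_{\operatorname{exc}} d\,T_{D,f}(r)+\varepsilon\,T_\Psi(r),
\]
and the elementary bound $T_\Psi(r)\le T_{D,f}(r)+O(1)$ (again a consequence of $\log\mu\le 0$ together with $N_h\ge 0$) absorbs the leftover $\varepsilon T_\Psi$ into $(d+\varepsilon)T_{D,f}$, yielding Theorem \ref{gsmt2} with $\dim V=d$.

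The main obstacle in this plan is the careful treatment of the base locus of $V$: the map $\Phi$ is only defined on $X\setminus\operatorname{Bs}(V)$, and the factorization $\phi_i\circ f=h\psi_i$ is the mechanism by which the order of contact of $f$ with $\operatorname{Bs}(V)$ is packaged into the single entire function $h$. Equivalently one may resolve the indeterminacy by $\pi:\tilde X\to X$, so that $\tilde\Phi$ becomes a morphism with $\tilde\Phi^\ast\calO(1)=\pi^\ast\calO(D)(-E)$, lift $f$ uniquely to $\tilde f$, and match $N_h(0,r)$ with the intersection count of $\tilde f$ against the exceptional divisor $E$. Arranging the bookkeeping so that the correction enters as exactly $d\cdot N_h(0,r)$ (rather than a variable-cardinality term $|J^\ast|\cdot N_h$) is the key technical point, and it is precisely what forces the normalization $\log\mu\le 0$ and the bound $|J^\ast|=d$ to be implemented together.
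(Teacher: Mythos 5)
Your proposal is correct and follows essentially the same route as the paper: both reduce to the Vojta--Ru second main theorem with Wronskian term on $\PP^{d-1}$ applied to the reduced representation $\Psi$, and both transfer Weil and counting functions back to $X$ by controlling a $j$-independent correction term using $\#J\le\dim V$ together with a positivity statement (the paper phrases this via the closure of the graph of $\Phi$ and an effective divisor $B$ with $p^*(s_j)-B=\phi^*H_j$, so that $(\#J)\bigl(m_{\tilde f}(B,r)+N_h(0,r)\bigr)\le d\,T_{B,\tilde f}(r)$, which is exactly your normalization $\log\mu\le 0$ in different clothing). The only imprecision is your claim that the maximizing subset $J^\ast$ is the same on both sides and has cardinality $d$ --- neither is true in general nor needed, since the inequality $|J|\log\mu\ge d\log\mu$ (from $\log\mu\le 0$ and $|J|\le d$) already yields the required bound, matching the paper's use of effectivity of $B$.
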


\begin{proof}
 Most of the proof is identical to the proof of \cite[Theorem 3.2]{hussein-general}, therefore we will omit some details and indicate the required adjustments.

 We may assume that $d>1$.
  Let $X'$ be the
closure of the graph of $\Phi$, and let $p\colon X'\to X$ and
$\phi\colon X'\to\mathbb P^{d-1}$ be the projection morphisms.  
Then there is an effective Cartier divisor $B$ on $X'$ such that,
for each nonzero $s\in V$, there is a hyperplane $H$ in $\mathbb P^{d-1}$
such that $p^{*}(s)-B=\phi^{*}H$.  Let $\tilde f:\mathbb C\to X'$ be the lifting of $f$.
Then $\phi\circ \tilde f=(\psi_1,\hdots,\psi_{d}):\mathbb C\to   \mathbb{P}^{d-1}$ is a reduced presentation of  $\Phi\circ f$.  Moreover,  let $h$ be an entire function such that $h\psi_i=\phi_i(f)$ for each $1\le i\le d$, i.e. $h$ is a gcd of $\phi_i(f)$, $1\le i\le d$.  Then
\begin{align}\label{countingcompare}
N_{\tilde f}(B,r)\ge N_h(0,r)+O(1).
\end{align}
  
For each $j=1,\dots,q$ let $H_j$ be the hyperplane in $\mathbb P^{d-1}$
for which 
\begin{align}\label{divisorrelation}
p^{*}D_j-B=p^{*}(s_j)-B=\phi^{*}H_j.
\end{align}
  Choose a Weil function $\lambda_B$
for $B$.  Then,  we have
\begin{align*}
\lambda_{p^{*}D_j}=\lambda_{\phi^{*}H_j}+\lambda_B+O(1).
\end{align*}
By the functoriality of Weil functions, 
\[
  \lambda_{p^{*}D_j}\left(\tilde f(z)\right)=\lambda_{D_j}\left(  f(z)\right) \qquad \text{ and } \qquad \lambda_{\phi^{*}H_j}\left(\tilde f(re^{i\theta})\right)=\lambda_{ H_j}\left(\phi(\tilde f(re^{i\theta}))\right).
\]

By the  general form of the second main theorem with a Wronskian term    in \cite[Theorem 1]{vojta1997} and \cite[Theorem 2.3]{ru1997general},
we have 
\begin{align*}
 \int_0^{2\pi} \max_J \sum_{j\in J} \lambda_{ H_j}\left(\phi(\tilde f)(re^{i\theta})\right)\frac{d\theta}{2\pi}+N_{W(\phi(\tilde f ))}(0,r) \leq_{\operatorname{exc}} (d+\varepsilon)T_{\phi(\tilde f )}(r).
\end{align*}
From \eqref{divisorrelation}, we have
\[
T_{\phi(\tilde f )}(r)=T_{D,f}-T_{B,\tilde f}.
\]
Since each set $J$ has at most $\dim V$ elements and $B$ is effective, it follows from \eqref{countingcompare} that
\[
(\# J)\left(m_{\tilde f}(B,r)+N_h(0,r)\right)\le (\# J)\left(m_{\tilde f}(B,r)+N_{\tilde f}(B,r)\right)\le d T_{B,\tilde f} +O(1).
\]
Hence,
\begin{align*} 
 &\int_0^{2\pi} \big(\max_J \sum_{j\in J}\lambda_{ H_j}\left(\phi(\tilde f)(re^{i\theta})\right)+\lambda_B\left(\tilde f (re^{i\theta})\right) \big)\frac{d\theta}{2\pi}+d N_h(0,r))+N_{W(\phi(\tilde f ))}(0,r)\\
 & \leq_{\operatorname{exc}} (d+\varepsilon)T_{\phi(\tilde f )}(r)+dT_{B,\tilde f} +O(1)\\
 & \leq_{\operatorname{exc}}(d+\varepsilon)T_{D,f}+O(1).
\end{align*}
\end{proof}

\begin{proof}[Proof of Theorem \ref{trungeneral}]
  We will follow the proof of  the analytic part of \cite[General Theorem]{RV19} closely, and only indicate the necessary modification.
Let $\epsilon>0$ be given.  We want to show that 
\begin{align}\label{mainineq1}
    \sum_{j=1}^q\beta(\calL, D_j)\left(T_{D_j,f}(r)-N_f^{(Q)}(D_j,r) \right) \leq_{\operatorname{exc}} (1+\varepsilon)T_{\calL,f}(r). 
\end{align}
Since the quantities  
 $\big(T_{D_j,f}(r)-N_f^{(Q)}(D_j,r)\big)/ T_{\calL,f}(r) $ 
are bounded when their respective
denominators are sufficiently large, it suffices to prove
(\ref{mainineq1}) 
with a slightly smaller $\epsilon>0$
and with $\beta(\calL,D_i)$ replaced by slightly smaller
$\beta_i\in\Q$ for all $i$.   It's also clear that we may assume that $\beta_i\ne 0$ for each $i$.

Choose positive integers $N$ and $b$ such that
\begin{equation}\label{aut_choices}
  \left( 1 + \frac nb \right) \max_{1\le i\le q}
      \frac{\beta_i Nh^0(X, \calL^N) }
        {\sum_{m\ge1} h^0(X, \calL^N(-mD_i))}
    < 1 + \epsilon\;.
\end{equation}

Let
$$\Sigma
  = \biggl\{\sigma\subseteq \{1,\dots,q\}
    \bigm| \bigcap_{j\in \sigma} \Supp D_j\ne\emptyset\biggr\}\;.$$
For $\sigma\in \Sigma$, let
$$\bigtriangleup_{\sigma}
= \left\{\mathbf a = (a_i)\in \prod_{i\in\sigma}\beta_i^{-1}\mathbb{N}
    \Bigm| \sum_{i\in\sigma} \beta_ia_i = b \right\}\;.$$
For $\mathbf a\in\bigtriangleup_{\sigma}$ as above, we construct a filtration of $H^0(X, \calL^N)$ as follows: 
For  $x\in  \R^+$, 
one defines  
the ideal $\mathcal I_{\mathbf a}(x)$ of ${\calO}_X$ by
\begin{equation*}
  \mathcal I_{\mathbf a}(x)
  = \sum_{{\bf b}} {\calO}_X\left(-\sum_{i\in \sigma} b_iD_{i}\right)
\end{equation*}
where the sum is taken for all ${\bf b}\in {\mathbb{N}}^{\#\sigma}$
with $\sum_{i\in \sigma} a_ib_i\ge bx$. Let
\[
  {\mathcal F}(\sigma; {\bf a})_x
  = H^0(X, \calL^N \otimes {\mathcal I_{\mathbf a}}(x)),
\]
which we regard as a subspace of $H^0(X,\calL^N)$.
We note that there are only finitely many ordered pairs
$(\sigma,\mathbf a)$ with $\sigma\in\Sigma$
and $\mathbf a\in\bigtriangleup_\sigma$.  Let $\mathcal B_{\sigma; {\bf a}}$ be a basis of
$H^0(X, \calL^N)$ adapted to the above filtration $\{{\mathcal F}(\sigma; {\bf a})_x\}_{x\in\R^+}$.

  For a basis ${\mathcal B}$ of $H^0(X, {\calL}^N)$, denote by $(\mathcal B)$  the sum of the divisors $(s)$
for all $s\in\mathcal B$.  
We now state the following main lemma in  \cite{RV19}.

\begin{lemma}[{\cite[Lemma 6.8]{RV19}}]\label{aut_main_lemma}
With the above notation, we have
\begin{equation}\label{aut_main_step}
  \bigvee_{\substack{\sigma\in\Sigma \\ \mathbf a\in\Delta_\sigma}}
      (\mathcal B_{\sigma;\mathbf a})
    \ge \frac b{b+n}\left(\min_{1\le i\le q}
      \frac{\sum_{m=1}^\infty h^0(X,\calL^N(-mD_i))}{\beta_i}\right)
      \sum_{i=1}^q \beta_iD_i\;.
\end{equation}
\end{lemma}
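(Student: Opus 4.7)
The plan is to verify the inequality component-by-component on each prime divisor of $X$. Since $\bigvee$ is coefficient-wise supremum, it suffices, for each prime divisor $E \subset X$, to exhibit a single pair $(\sigma, \mathbf a)$ with $\mult_E(\mathcal B_{\sigma;\mathbf a}) \geq \frac{b}{b+n}\, c \sum_{i=1}^q \beta_i \cdot \mult_E D_i$. Setting $\sigma = \sigma_E := \{i : E \subseteq \Supp D_i\}$ and $d_i := \mult_E D_i$, we may assume $\sigma \ne \emptyset$ (otherwise both sides vanish along $E$); since $E$ lies in $\bigcap_{i \in \sigma}\Supp D_i$, $\sigma \in \Sigma$. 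The goal becomes to produce some $\mathbf a \in \bigtriangleup_\sigma$ for which $\sum_{s \in \mathcal B_{\sigma;\mathbf a}} \ord_E(s) \geq \frac{b}{b+n}\, c \sum_{i \in \sigma} \beta_i d_i$.

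The key local estimate is obtained by trivializing $\calL^N$ near a generic point of $E$ and writing $s \in \mathcal F(\sigma;\mathbf a)_x$ as $s = \sum c_{\mathbf b}\, t^{\mathbf b}$ in local equations $t_i$ of the $D_i$ (for $i \in \sigma$): the defining property of $\mathcal I_{\mathbf a}(x)$ forces $\sum_i a_i b_i \geq bx$ on every nonzero monomial, giving $\ord_E(s) \geq \min_{\mathbf b}\sum_i b_i d_i \geq bx\cdot\min_{i\in\sigma}(d_i/a_i)$. I would then pick $\mathbf a$ as close to proportional to $(d_i)_{i\in\sigma}$ as the lattice $\prod_i \beta_i^{-1}\mathbb{N}$ and the normalization $\sum_i \beta_i a_i = b$ permit, which balances the ratios $d_i/a_i$ to the common value $\sum_j\beta_j d_j / b$ (up to rounding) and yields $\ord_E(s) \geq x\sum_j \beta_j d_j$. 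Summing over the adapted basis with $x_s := \sup\{x : s \in \mathcal F(\sigma;\mathbf a)_x\}$ and using the identity $\sum_{s\in\mathcal B} x_s = \int_0^\infty \dim \mathcal F(\sigma;\mathbf a)_x\,dx$ (both sides equal $\sum_k k\cdot\dim\mathrm{gr}_k$ of the filtration), the problem reduces to showing $\int_0^\infty \dim \mathcal F(\sigma;\mathbf a)_x\, dx \geq \frac{b}{b+n}\, c$.

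This final bound is the combinatorial heart of the argument. The natural input is the inclusion $\calO_X(-\sum_i b_i D_i) \subseteq \mathcal I_{\mathbf a}(x)$ whenever $\sum_i a_i b_i \geq bx$, which gives $\dim \mathcal F(\sigma;\mathbf a)_x \geq h^0(X, \calL^N(-\sum_i b_i D_i))$ for each such $\mathbf b$; summing these contributions over lattice points $\mathbf b \in \mathbb{N}^{|\sigma|}$ in the simplex $\{\sum_i a_i b_i \leq b\}$ and relating the result to $c = \min_i\sum_m h^0(X, \calL^N(-m D_i))/\beta_i$ via the normalization $\sum_i\beta_i a_i = b$ is the plan. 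The main obstacle is extracting the prefactor $\frac{b}{b+n}$: this is an Autissier-type lattice-point error coming from the fact that the lattice points in a dilated simplex of size $b+n$ exceed those in the simplex of size $b$ by a factor of at most $(b+n)/b$ when $|\sigma| \leq n$, which is guaranteed by the proper-intersection hypothesis on the $D_i$. Managing this discretization simultaneously with the rationality loss from constraining $\mathbf a$ to $\prod_i\beta_i^{-1}\mathbb{N}$ is the technical crux of the lemma.
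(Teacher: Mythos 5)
First, a remark on scope: the paper does not prove this statement at all --- it is quoted verbatim from \cite[Lemma 6.8]{RV19} and used as a black box --- so there is no in-paper argument to compare against, and your proposal has to be judged as a reconstruction of Ru--Vojta's (ultimately Autissier's) proof. Judged that way, it has a structural problem at the very first step. You reduce to prime divisors $E$ and set $\sigma=\sigma_E=\{i: E\subseteq \Supp D_i\}$. But the hypothesis that $D_1,\dots,D_q$ intersect properly forbids two of them from sharing a component, so $|\sigma_E|\le 1$ for every prime divisor $E$. With $\sigma$ a singleton $\{i\}$, the set $\Delta_\sigma$ is the single point $a_i=b/\beta_i$, the filtration is the one-variable filtration by $H^0(X,\calL^N(-mD_i))$, and your steps (the order estimate, the adapted-basis identity $\sum_s x_s=\int_0^\infty \dim\mathcal F(x)\,dx$) already give the multiplicity bound along $E$ with constant $\min_i\beta_i^{-1}\sum_m h^0(\calL^N(-mD_i))$ --- no balancing of $\mathbf a$, no lattice-point count, and no factor $b/(b+n)$ are needed. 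So everything in your second and third paragraphs (the proportional choice of $\mathbf a$, the Autissier-type simplex count, the claim that proper intersection gives $|\sigma|\le n$) is disconnected from the reduction you set up: for your $\sigma_E$ the bound is $|\sigma_E|\le 1$, not merely $\le n$.

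The reason this matters is that the prime-divisor-wise inequality is too weak for the way the lemma is used: in the proof of Theorem \ref{trungeneral} the lemma is converted into the Weil-function inequality \eqref{weilbase2}, $\max_i\lambda_{\mathcal B_i}(x)\ge c\sum\beta_i\lambda_{D_i}(x)+O(1)$, and an inequality of multiplicities along prime divisors does not imply such a pointwise bound (already for two lines through a point $P$, $\max(\lambda_{D_1},\lambda_{D_2})$ does not dominate $\lambda_{D_1+D_2}$ near $P$). The actual content of the lemma is a local statement at \emph{points} $x$ of the strata $\bigcap_{i\in\sigma}\Supp D_i$, where $\sigma=\{i: x\in\Supp D_i\}$ genuinely can have up to $n$ elements (this is where proper intersection gives $|\sigma|\le n$), and it is there that the multi-index filtration, the near-proportional choice of $\mathbf a$, and the lattice-point estimate producing $b/(b+n)$ are indispensable. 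So the correct componentwise reduction is over points, not prime divisors. Finally, even granting the right reduction, the decisive estimate $\int_0^\infty\dim\mathcal F(\sigma;\mathbf a)_x\,dx\ge \frac{b}{b+n}\,c$ is only announced (``is the plan'', ``the technical crux''), and the rounding loss from constraining $\mathbf a$ to $\prod_i\beta_i^{-1}\mathbb N$ is acknowledged but not absorbed anywhere --- as written, the chain $\operatorname{ord}_E(s)\ge bx\min_i(d_i/a_i)\ge x\sum_j\beta_jd_j$ can fail by exactly that rounding error. So the proposal correctly names the ingredients of the known proof but does not assemble them into one.
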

Here, the notation ``$\bigvee$" is referred to the least upper bound with respect the partial order on the (Cartier) divisor group of $X$  by the relation $D_1\le D_2$ if $D_2-D_1$ is effective.
 Write
$$\bigcup_{\sigma; {\bf a}}  \mathcal B_{\sigma; {\bf a}}    = \mathcal B_1\cup\cdots \cup \mathcal B_{T_1}=\{s_1, \dots, s_{T_2}\}.$$
 For each $i=1,\dots, T_1$, let $J_i\subseteq\{1,\dots,T_2\}$ be the subset
such that $\mathcal B_i = \{s_j:j\in J_i\}$.  Choose Weil functions
 $\lambda_{\mathcal B_i}$ ($i=1,\dots,T_1$),
and $\lambda_{s_j}$ ($j=1,\dots,T_2$) for the divisors $(\mathcal B_i)$,
and $(s_j)$, respectively.  Then, by (\ref{aut_main_step})
 for  $x\in X$, 
\begin{equation}\label{weilbase2}
  \begin{split}
  &\frac b{b+n} \left( \min_{1\leq i \leq q} \sum_{m\ge 1}
    \frac{h^0(\calL^N(-mD_i))}{\beta_i} \right)
    \sum_{i=1}^q  \beta_i \lambda_{D_i} (x) \\
  &\qquad\le \max_{1\le i\le T_1} \lambda_{\mathcal B_i}  (x)+ O(1)= \max_{1\le i\le T_1} \sum_{j\in J_i} \lambda_{s_j} (x) + O(1).
  \end{split}
\end{equation}
Let $M=h^0(X,\calL^N)$ an let the set $\{\phi_1,\dots,\phi_M\}$ be a basis of the vector space $H^0( X, \calL^N)$, and let $\Phi=[\phi_1,\dots,\phi_M]:X\to \PP^{M-1}(\C)$ be the corresponding (rational) map.  Let $f:\C\to X$ be an algebraically non-degenerate holomorphic map.  Let $h$ be a gcd of $\phi_1(f),\hdots,\phi_M(f)$, i.e. $h$ is an entire function such that  $\psi_1:=h^{-1} \phi_1(f),\hdots,\psi_M:=h^{-1} \phi_M(f)$ have no common zeros. Then  $\Psi=(\psi_1,\hdots,\psi_M)$ is a reduced form of $\Phi\circ f$.

By Theorem \ref{gsmt2},
\begin{equation}\label{schmidt_ineq}
  \int_0^{2\pi} \max_J \sum_{j\in J} \lambda_{s_j}(f(re^{i\theta}))\frac{d\theta}{2\pi}
    \leq_{\operatorname{exc}} \left(M+ \epsilon \right) T_{f,\mathscr L^N}(r) -N_{W(\Psi  )}(0,r)-MN_h(0,r),
\end{equation}
here the maximum is taken over all subsets $J$ of $\{1,\dots,T_2\}$
for which the sections $s_j$, $j\in J$, are linearly independent.
Combining (\ref{weilbase2}) and (\ref{schmidt_ineq}), we have
\begin{align*}
&\sum_{i=1}^q   \beta_i m_f(D_i,r)\\
  &\leq_{\operatorname{exc}} \left( 1 + \frac nb \right) \max_{1\le i\le q}
    \frac{\beta_i }
      {\sum_{m\ge1} h^0(\mathscr L^N(-mD_i))}\big( (M + \epsilon)T_{f,\mathscr L^N}(x) -N_{W(\Psi  )}(0,r)-MN_h(0,r)\big).  \end{align*}
 Using
(\ref{aut_choices}) and the fact that $T_{f,\mathscr L^N}(r)=NT_{f,\mathscr L }(r)$,
we have
\begin{equation*}
 \sum_{i=1}^q    \beta_i m_f(D_i,r)
  \leq \left( 1+2\epsilon \right)T_{f,\mathscr L}(x) -C(N_{W(\Psi  )}(0,r)+MN_h(0,r)),
 \end{equation*}
where $C= (1+\epsilon)(MN)^{-1}$.
Applying   \eqref{FMT} to the above inequality,    we have
    \begin{equation*}
            \sum_{j=1}^q\beta_jT_{D_j,f}(r)- (1+2\varepsilon)T_{  \mathscr L,f}(r)\leq_{\operatorname{exc}} \sum_{j=1}^q \beta_jN_f(D_j,r)-C(N_{W(\Psi  )}(0,r)+MN_h(0,r)).    
            \end{equation*}   
  To finish the proof, we will show that there exists a large integer $Q$ (to be determined later) such that 
  \begin{equation}
    \label{TN2}
        \begin{aligned}
       \sum_{j=1}^q \beta_jN_f(D_j,r)-C(N_{W(\Psi  )}(0,r)+MN_h(0,r))\le  \sum_{j=1}^q \beta_jN_f^{(Q)}(D_j,r).
        \end{aligned}
    \end{equation} 
 For $z_0\in \mathbb C$, let $\rho_j$ be a local defining function of $D_j$ around one of its open neighborhoods $U$.  
 To show (\ref{TN2}), it suffices to deduce the following inequality 
  \begin{equation*}
        \begin{aligned}
       \sum_{j=1}^q \beta_j v_{z_0}^+ ( \rho_j\circ f)-C \cdot (v_{z_0}^+ (W(\Psi))+M v_{z_0}  (h)) \le  \sum_{j=1}^q\beta_j \min\{Q,v_{z_0}^+  (\rho_j\circ f)\},
        \end{aligned}
    \end{equation*} 
 for each $z_0\in \mathbb C$.

The above inequality holds trivially if $v_{z_0}^+(\rho_j\circ f)\le  Q$ for each $1\le j\le q$.
Therefore, we assume that $v_{z_0}^+(\rho_j\circ f)\ge  Q$ for some $1\le j\le q$.
By Lemma \ref{aut_main_lemma},  there exists a set of basis $s_1,\hdots,s_M$ of $H^0( X,\mathscr L^N)$ with the following property:
\begin{align}\label{S2} 
\sum_{i=1}^M (s_i)  &\geq  \frac b{b+n}\left(\min_{1\le i\le q}\sum_{m=1}^\infty
      \frac{h^0(X,\mathscr L^N(-mD_i))}{\beta_i}\right)  \sum_{i=1}^{q}\beta_{i}D_{i}\cr
      &\ge \frac{MN}{1+\epsilon} \sum_{i=1}^{q}\beta_{i}D_{i}=\frac 1C\sum_{i=1}^{q}\beta_{i}D_{i},
\end{align}  
where the last inequality is due to   \eqref{aut_choices}.  
Since
\begin{align*}
v_{z_0}^+ (s_i|_U\circ f)\ge \sum_{j=1}^q\text{ord}_{D_j}s_i \cdot v_{z_0}^+ (\rho_j\circ f),
\end{align*}
 we can derive from (\ref{S2}) that
\begin{align}\label{S1} 
\sum_{i=1}^M v_{z_0}^+ (s_i|_U\circ f)  \geq  \frac1C\sum_{j=1}^{q}\beta_jv_{z_0}^+(\rho_j\circ f).
\end{align}      
On the other hand, since $\{\phi_1,\dots,\phi_M\}$ is a basis of the vector space $H^0( X,\mathscr L^N)$, each $s_j$ is a $ \mathbb C$-linear combination of the $\phi_i$'s, and hence each  $h^{-1}s_j(f)$ is a linear combination of the $\psi_i$'s.
From the basic properties of Wronskians, we have
 \begin{equation}
    \label{ordW}
        \begin{aligned}
     v_{z_0}^+(W(\Psi))\ge \sum_{j=1}^Mv_{z_0}^+ (s_j|_U\circ f) -M v_{z_0}(h)-\frac12M(M-1).        
      \end{aligned}
 \end{equation}  
Combining (\ref{S1}) and (\ref{ordW}) and the assumption that $v_{z_0}^+(\rho_j\circ f)\ge  Q$ for some $1\le j\le q$, we obtain 
\begin{equation*}
        \begin{aligned}
       \sum_{j=1}^q\beta_j v_{z_0}^+ (\rho_j\circ f)-C\cdot ( v_{z_0}^+(W(\Phi\circ f))+M v_{z_0}(h)) 
       &\le  \frac12CM(M-1)\cr 
       & \le \sum_{j=1}^q \beta_j\min\{Q,v_{z_0}^+ (\rho_j\circ f)\},
        \end{aligned}
    \end{equation*}
where $Q$ is chosen to be $\frac{CM(M-1)}{2\min_{1\le j\le q}\{\beta_j\}}$. (Note that we have assumed that $\beta_j\ne 0$ for each $j$.)
 This completes our proof.     
\end{proof}

\subsection{Function Fields}\label{subsec:ffRV}
 In this section we let $\kappa$ be an algebraically closed field of characteristic zero, let $\calC$ be a smooth projective curve over $\kappa$ of genus $g(\calC)$ and let $K=\kappa(\calC)$ be the function field of $\calC$.  
For each point ${\mathfrak p}\in \calC$, we may choose a uniformizer $t_{\mathfrak p}$ to define a
normalized order function $v_{\mathfrak p}:=\ord_{\mathfrak p}:K\to \Z\cup\{\infty\}$ at
${\mathfrak p}$.    For a non-zero element $f\in K$, the
\emph{height} $h(f)$ counts its number of poles with multiplicities, i.e.
\[
  h(f):=\displaystyle{\sum_{{\mathfrak p}\in\calC} \max \{ 0, - v_{\mathfrak p}(f)\}}.
\]
Let $f_0,...,f_m\in K$ not all zeros.  Then
${\bf f}=[f_0:\cdots:f_m]\in \PP^m(K)$ can be viewed as a morphism from $\calC$ to $\PP^m(\kappa)$. The height of this morphism (or of the corresponding point in $\PP^m(K)$) is  defined by
$$
h({\bf f})=h(f_0,...,f_m):=\sum_{{\mathfrak p}\in \calC} -\min\{ v_{\mathfrak p} (f_0),...,v_{\mathfrak p}(f_m)\}.
$$
Let $D$ be a Cartier divisor on a variety $X$ over $\kappa$.  Similarly to the number field case, the classical theory of heights (see e.g. \cite[Part B]{SH} or \cite[Chapter 2]{BG}) associates to every Cartier divisor $D$ on $X$ a height function $h_D: X(K)\to \R$ and a Weil function  (local height function)    $\lambda_{D,{\mathfrak p}}  :X(K)\setminus\Supp D\to \R$, ${\mathfrak p}\in \calC$, well-defined up to a bounded functions such that 
\begin{align}\label{FMTff}
\sum_{{\mathfrak p}\in \calC}\lambda_{D,{\mathfrak p}}(P)=h_D(P)+O(1)
\end{align} 
for all $P\in X(K)\setminus\Supp D$.
Let $S$ be a finite set  of points in $\calC$.   
 We denote by 
 $$
 m_{D,S}(P)=\sum_{{\mathfrak p}\in S}\lambda_{D,{\mathfrak p}}(P),\quad\text{and} \quad  N_{D,S}(P)=\sum_{{\mathfrak p}\notin S}\lambda_{D,{\mathfrak p}}(P)
 $$
 the proximity and counting functions as defined in \cite{Vojta87}.
 Let  $a_0X_0+a_1X_1+\cdots+a_mX_m$ be a linear form with $a_0,\hdots,a_m\in \kappa$ whose vanishing determine a hyperplane $H$  in $\PP^m$.  Then for all ${\mathfrak p}\in \calC$ and $P=[x_0:\cdots:x_m]\in \PP^m(K)$, 
the Weil function at   ${\mathfrak p} $  is given by  
$$
\lambda_{H,{\mathfrak p}}(P)=v_{\mathfrak p}(a_0x_0+a_1x_1+\cdots+a_mx_m)-\min\{ v_{\mathfrak p} (x_0),...,v_{\mathfrak p}(x_m)\}.
$$

We recall the following version of the second main theorem for function fields from  \cite{Wang}.
 \begin{theorem}[{\cite[Theorem 1]{Wang}}] \label{smtfun}
In the above setting, let $H_1,\hdots,H_q$ be hyperplanes in $\PP^m(K)$ defined by linear forms with coefficients in $\kappa$.
If $P=[x_0:\cdots:x_m]\in \PP^m(K)$ is linearly nondegenerate over $\kappa$, then
$$
\sum_{\mathfrak p\in S}\max_{j\in J} \lambda_{H_j,{\mathfrak p}}(P)\le (m+1)h(P)+\frac{m(m+1)}2(2g(\calC) - 2 +\# S),
$$
where the maximum is taken over all subset $J$ of $\{1,\hdots,q\}$ such that the linear forms defining $H_j$, $j\in J$, are linearly independent.
\end{theorem}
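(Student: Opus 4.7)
The plan is to follow the classical Wronskian approach to the Second Main Theorem in the function field setting, adapting Cartan-style arguments. Choose a nonzero $\kappa$-derivation $\partial$ on $K$ (e.g., $\partial = d/dt$ for some nonconstant $t\in K$). By the linear nondegeneracy hypothesis on $P = [x_0:\cdots:x_m]$, the components $x_0,\ldots,x_m$ are $\kappa$-linearly independent, so the Wronskian
\[
W := W(x_0,\ldots,x_m) = \det\bigl(\partial^i x_j\bigr)_{0\le i,j\le m}
\]
is a nonzero element of $K$.

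First I would perform the pointwise reduction. For each $\mathfrak p\in S$, select a subset $J(\mathfrak p)\subseteq \{1,\ldots,q\}$ of cardinality $m+1$ that attains the maximum $\max_J\sum_{j\in J}\lambda_{H_j,\mathfrak p}(P)$ subject to the linear-independence constraint; the linear forms $L_j$ defining $H_j$ for $j\in J(\mathfrak p)$ form a $\kappa$-basis of $\kappa[X_0,\ldots,X_m]_1$. Multilinearity of the determinant in its columns gives
\[
W = c_{\mathfrak p}\cdot W\bigl(L_{j_0}(x),\ldots,L_{j_m}(x)\bigr)
\]
for some $c_{\mathfrak p}\in\kappa^{*}$, so $v_{\mathfrak p}(c_{\mathfrak p})=0$ and we may compute $v_{\mathfrak p}(W)$ using the linear forms instead of the coordinates.

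The key local input is the standard Wronskian inequality: for any $f_0,\ldots,f_m\in K$ and any place $\mathfrak p$,
\[
v_{\mathfrak p}\bigl(W(f_0,\ldots,f_m)\bigr) \;\ge\; \sum_{k=0}^m v_{\mathfrak p}(f_k) \;-\; \binom{m+1}{2}\bigl(1 + d_{\mathfrak p}(\partial)\bigr),
\]
where $d_{\mathfrak p}(\partial)\ge 0$ is a local invariant measuring the pole of $\partial$ at $\mathfrak p$. Applied to $f_k = L_{j_k}(x)$ and combined with the defining property of Weil functions, $v_{\mathfrak p}(L_{j_k}(x)) - \min_i v_{\mathfrak p}(x_i) = \lambda_{H_{j_k},\mathfrak p}(P) + O(1)$, this furnishes a lower bound on $v_{\mathfrak p}(W)$ in terms of $\sum_{j\in J(\mathfrak p)}\lambda_{H_j,\mathfrak p}(P) + (m+1)\min_i v_{\mathfrak p}(x_i)$ plus an error of order $\binom{m+1}{2}(1+d_{\mathfrak p}(\partial))$.

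To conclude, sum this inequality over all $\mathfrak p\in\calC$, absorbing the contribution from $\mathfrak p\notin S$ via the nonnegativity of local Weil functions. The global identities $\sum_{\mathfrak p} v_{\mathfrak p}(W)=0$ (product formula), $-\sum_{\mathfrak p}\min_i v_{\mathfrak p}(x_i) = h(P)$ (definition of height), together with a careful choice of $\partial$ whose polar divisor is supported on $S$, give $\sum_{\mathfrak p}(1+d_{\mathfrak p}(\partial)) = 2g(\calC)-2+\#S$ by the degree of $\Omega^1_{\calC/\kappa}$. Assembling these pieces yields the claim. The main obstacle is the clean bookkeeping of the derivation term; this is most transparent if one formulates the Wronskian intrinsically as a section of $x^{*}\mathcal{O}_{\PP^m}(m+1)\otimes\Omega^{\otimes m(m+1)/2}_{\calC/\kappa}(\ast S)$, whose degree is exactly $(m+1)h(P) + \binom{m+1}{2}(2g(\calC)-2+\#S)$, matching the stated bound.
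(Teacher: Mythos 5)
This is Theorem 1 of \cite{Wang}, which the paper imports without proof, so there is no in-paper proof to compare against; what one can say is that your outline follows the standard Cartan--Wronskian route that Wang also takes, and the overall strategy is sound: form the Wronskian, use linear nondegeneracy to make it nonzero, reduce at each $\mathfrak p\in S$ to the maximizing subset of $m+1$ independent forms via multilinearity, and close with the product formula together with the degree of $\Omega^1_{\calC}$. The final intrinsic packaging and the target degree $(m+1)h(P)+\binom{m+1}{2}(2g-2+\#S)$ are the right shape.

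The bookkeeping of the derivation term, however, is not correct as written, and this is the heart of the proof. You assert $v_{\mathfrak p}(W(f_0,\dots,f_m))\ge\sum_k v_{\mathfrak p}(f_k)-\binom{m+1}{2}(1+d_{\mathfrak p}(\partial))$ with $d_{\mathfrak p}(\partial)\ge 0$ and then ``sum this inequality over all $\mathfrak p\in\calC$''. That sum diverges: the $-\binom{m+1}{2}$ penalty is charged at every closed point. The correct structure is asymmetric in $\mathfrak p$. Writing $\partial=u_{\mathfrak p}\cdot d/dt_{\mathfrak p}$ and $\mu_{\mathfrak p}=\min_i v_{\mathfrak p}(x_i)$, one has the exact relation
\[
v_{\mathfrak p}(W_\partial(x_0,\dots,x_m))=(m+1)\mu_{\mathfrak p}+\tbinom{m+1}{2}v_{\mathfrak p}(u_{\mathfrak p})+v_{\mathfrak p}\bigl(W_{d/dt_{\mathfrak p}}(\tilde x_0,\dots,\tilde x_m)\bigr),
\]
with $\tilde x_i=t_{\mathfrak p}^{-\mu_{\mathfrak p}}x_i\in\calO_{\mathfrak p}$. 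At $\mathfrak p\notin S$ one uses only the integrality bound $v_{\mathfrak p}(W_{d/dt_{\mathfrak p}}(\tilde x))\ge 0$ (no $-\binom{m+1}{2}$); at $\mathfrak p\in S$ one substitutes the maximizing forms $L_{j}(x)$, $j\in J(\mathfrak p)$, and uses $v_{\mathfrak p}(W_{d/dt_{\mathfrak p}})\ge\sum_j\lambda_{H_j,\mathfrak p}(P)-\binom{m+1}{2}$. Also note that $v_{\mathfrak p}(u_{\mathfrak p})=-\ord_{\mathfrak p}(dt)$ is of either sign, so $d_{\mathfrak p}(\partial)\ge 0$ is false in general; there is no need (and typically no way) to choose $\partial$ with polar divisor supported on $S$ — summing $v_{\mathfrak p}(u_{\mathfrak p})$ over all places already gives $2-2g(\calC)$, which, together with $\sum_{\mathfrak p}\mu_{\mathfrak p}=-h(P)$ and the $\binom{m+1}{2}\#S$ produced only at $\mathfrak p\in S$, yields exactly the claimed bound. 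Your intrinsic line-bundle description in the final sentence should likewise use $x^*\calO_{\PP^m}(m+1)\otimes\Omega_{\calC}^{\otimes\binom{m+1}{2}}$ of degree $(m+1)h(P)+\binom{m+1}{2}(2g-2)$; the extra $\binom{m+1}{2}\#S$ then comes from the local Wronskian estimate at $S$, not from twisting the cotangent sheaf along $S$.
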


We will use Theorem \ref{smtfun} to obtain a function field analogue of Theorem \ref{trungeneral} with an explicit dependence on the Euler characteristic of the curve $\calC$.

 \begin{theorem} \label{thm:ffconst}
   Let $X\subset \PP^m$ be a projective variety over $\kappa$ of dimension $n$, let $D_1,\cdots,D_q$ be effective Cartier divisors intersecting properly on  $X$, and let $\calL$ be a big line sheaf.  
   Then for any $\epsilon>0$, there exist constants $c_1$ and $c_2$, independent of the curve $\calC$ and the set $S$, such that for any map $x=[x_0:\cdots:x_m] :\calC\to X$, where $x_i\in K$, outside the augmented base locus of $\calL$ we have 
 either
 \begin{align*}
   \sum_{i=1}^q \beta_{\mathcal L, D_i} m_{D_i,S}(x)\le (1+\epsilon) h_{\calL}(x)+c_1 \max\left\{ 1, 2g(\calC)-2+|S|\right\},
\end{align*}
or the image of $x$  is contained in  a hypersurface (over $\kappa$) in $\PP^m$ of degree at most $c_2$. 

\end{theorem}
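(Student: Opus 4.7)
The plan is to mirror the proof of the analytic version (Theorem \ref{trungeneral}), simply replacing the analytic second main theorem (Theorem \ref{gsmt2}) by its function field analogue (Theorem \ref{smtfun}). The function field version is actually simpler: since the conclusion only involves proximity, not truncated counting, no Wronskian truncation argument is needed. What we gain over Ru--Vojta's original formulation is the explicit remainder $\tfrac{M(M-1)}{2}(2g(\calC)-2+|S|)$ produced by Theorem \ref{smtfun}, which accounts for the $c_1\max\{1,2g(\calC)-2+|S|\}$ term in the statement.

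Given $\epsilon>0$, I first choose positive integers $N$, $b$, and rationals $\beta_i$ slightly below $\beta(\calL,D_i)$, so that the Ru--Vojta inequality \eqref{aut_choices} holds with $\epsilon/2$ in place of $\epsilon$. Crucially, these choices depend only on $(X,D_i,\calL,\epsilon)$, not on $\calC$ or $S$. I then run the Ru--Vojta filtration construction verbatim: for each $\sigma\in\Sigma$ and $\mathbf{a}\in\Delta_\sigma$, produce a basis $\mathcal{B}_{\sigma;\mathbf{a}}$ of $H^0(X,\calL^N)$ adapted to the filtration $\{\mathcal{F}(\sigma;\mathbf{a})_x\}$, and collect all resulting sections into a finite list $\{s_1,\dots,s_{T_2}\}$. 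Lemma \ref{aut_main_lemma}, read at the level of Weil functions (just as in \eqref{weilbase2} of the analytic argument), yields at each place $\mathfrak{p}$ of $\calC$ the bound
$$\frac{b}{b+n}\Big(\min_{1\le i\le q}\frac{\sum_{m\ge 1}h^0(\calL^N(-mD_i))}{\beta_i}\Big)\sum_{i=1}^q\beta_i\lambda_{D_i,\mathfrak{p}}(x) \le \max_J\sum_{j\in J}\lambda_{s_j,\mathfrak{p}}(x)+O(1).$$

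Next, I fix a basis $\{\phi_1,\dots,\phi_M\}$ of $H^0(X,\calL^N)$ with $M=h^0(X,\calL^N)$, and the associated rational map $\Phi\colon X\dashrightarrow\PP^{M-1}$, which is regular on the complement of the augmented base locus of $\calL$. Each $s_j$ is a $\kappa$-linear form in the $\phi_i$, hence corresponds to a hyperplane $H_j\subset\PP^{M-1}$ defined over $\kappa$. The dichotomy of the theorem now arises naturally from the linear (non)-degeneracy of $\Phi\circ x$ over $\kappa$: if $\Phi\circ x$ is linearly degenerate, then $x(\calC)$ is contained in the zero locus of a $\kappa$-linear form in the $\phi_i$'s, which is a hypersurface on $X$ pulling back to a hypersurface in $\PP^m$ of degree bounded by a constant $c_2$ depending only on $N$, $\calL$, and the embedding $X\hookrightarrow\PP^m$. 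If $\Phi\circ x$ is non-degenerate, I apply Theorem \ref{smtfun} to the family $\{H_j\}$ along $\Phi\circ x$, sum the Weil function inequality above over $\mathfrak{p}\in S$, and combine the two using the functoriality relation $h(\Phi\circ x)=Nh_\calL(x)+O(1)$; invoking \eqref{aut_choices} then absorbs error terms into $(1+\epsilon)h_\calL(x)$ and yields the desired bound with $c_1$ of order $M(M-1)/2$ times a factor depending only on the geometric data.

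The main obstacle is bookkeeping rather than conceptual: I must verify that every constant that appears---namely $N$, $b$, $M$, the $\beta_i$'s, and the $O(1)$ errors in the Weil function comparison and in $h(\Phi\circ x)=Nh_\calL(x)+O(1)$---depends only on $(X,D_i,\calL,\epsilon)$ and not on the pair $(\calC,S)$. This uniformity is the entire point of the theorem, and is what allows Corollary \ref{cor:surf_b} and thereby the algebraic hyperbolicity statement of Theorem \ref{th:CZ_ff} to be expressed purely in terms of the Euler characteristic $2g(\calC)-2+|S|$. A minor secondary verification is that in the degenerate case, a linear dependence relation among the $\phi_i(x)\in K$ does indeed produce a \emph{proper} hypersurface on $X$ (this uses that $x$ is not contained in the augmented base locus of $\calL$, so $\Phi\circ x$ is well-defined and the linear dependence is non-trivial on a dense open subset of $x(\calC)$).
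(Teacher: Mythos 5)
Your proposal is correct and follows essentially the same route as the paper's proof: run the Ru--Vojta filtration argument verbatim, replace the analytic second main theorem by Theorem~\ref{smtfun}, and split on the $\kappa$-linear (non)degeneracy of $\Phi\circ x$, with the degenerate case yielding the bounded-degree hypersurface. The only step you leave implicit is why replacing $\beta(\calL,D_i)$ by slightly smaller rational $\beta_i$ costs at most $(\epsilon/2)\,h_\calL(x)$; the paper justifies this by the bigness of $\calL$, which bounds $\sum_i h_{D_i}(x)$ by $c\,h_\calL(x)$ off the augmented base locus.
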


\begin{proof}   
The proof is similar to the first part of the proof  of Theorem \ref{trungeneral}.  We will follow its argument and notation and only indicate the modification.
Let $\epsilon>0$ be given.  Since $\calL$ is a big line sheaf, there is a constant $c$ such that  $\sum_{i=1}^qh_{D_i}(x)\le c h_{\calL}(x)$ for all $x\in X(K)$ outside the augmented base locus $B$ of $\calL$. This follows from \cite[Proposition 10.11]{Vojta11}.
By \eqref{FMTff}, together with the fact that $m_{D_i,S}\le h_{D_i}+O(1)$, we can choose $\beta_i\in\Q$ for all $i$ such that 
\[
\sum_{i=1}^q (\beta_{\mathcal L, D_i}-\beta_i) m_{D_i,S}(x)\le \frac{\epsilon}{2}h_{\calL}(x)
\]
for all $x\in X\setminus B(K)$. Therefore, we can assume that
$\beta_{\mathcal L, D_i}=\beta_i\in\Q$ for all $i$.   It's also clear that we may assume that $\beta_i\ne 0$ for each $i$. From now on we will assume that the point $x \in X(K)$ does \emph{not} lie on $B$.

Choose positive integers $N$ and $b$  to satisfy \eqref{aut_choices}.  The same arguments in the proof  of Theorem \ref{trungeneral} give
 \begin{equation}\label{eq:weilbase_ff}
  \begin{split}
  &\frac b{b+n} \left( \min_{1\leq i \leq q} \sum_{m\ge 1}
    \frac{h^0(\calL^N(-mD_i))}{\beta_i} \right)
    \sum_{i=1}^q  \beta_i \lambda_{D_i,\mathfrak p} (x) \\
  &\qquad\le \max_{1\le i\le T_1} \lambda_{\mathcal B_i,\mathfrak p}  (x)+ O(1)= \max_{1\le i\le T_1} \sum_{j\in J_i} \lambda_{s_j,\mathfrak p} (x) + O(1).
  \end{split}
\end{equation}
Let $M=h^0(X,\calL^N)$, let the set $\{\phi_1,\dots,\phi_M\}$ be a basis of the vector space $H^0( X, \calL^N)$, and let 
\begin{align}\label{Phi}
\Phi=[\phi_1,\dots,\phi_M]:X \dashrightarrow \PP^{M-1}(\kappa)
\end{align}
 be the corresponding rational map.  By  Theorem \ref{smtfun},  either the map $\Phi\circ x$ is linearly  degenerate, i.e. $\phi_1(x),\dots,\phi_M(x)$ are linearly  dependent over $\kappa$, or 
\begin{equation}\label{eq:schmidt2_ff}
  \sum_{\mathfrak p\in S}\max_J \sum_{j\in J} \lambda_{s_j,\mathfrak p}(x) 
    \le M \, h_{\calL^N}(x)  +\frac{M(M-1)}2(2g-2+|S|),
\end{equation}
here the maximum is taken over all subsets $J$ of $\{1,\dots,T_2\}$
for which the sections $s_j$, $j\in J$, are linearly independent.
We first consider when $\phi_1,\dots,\phi_M$ are linearly  independent over $\kappa$.
Combining \eqref{eq:weilbase_ff} and \eqref{eq:schmidt2_ff} gives
$$\sum_{i=1}^q   \beta_i m_{D_i,S}(x)
  \leq \left( 1 + \frac nb \right) \max_{1\le i\le q}
    \frac{\beta_i }
      {\sum_{m\ge1} h^0(\calL^N(-mD_i))} \, M\, h_{\calL^N}(x)  +c_1'(2g-2+|S|) + O(1),$$
      where $c_1'=\frac{M(M-1)}2. $
 Using
\eqref{aut_choices} and the fact that $h_{\calL^N}(x)=Nh_{\calL}(x)$,
we have
\begin{equation*}
 \sum_{i=1}^q   \beta_i m_{D_i,S }(x)
 \leq \left( 1+\epsilon \right)h_{\calL}(x)+c_1'(2g-2+|S|) + O(1),
 \end{equation*}
 which implies the first case of the Theorem.

 To conclude we note that, if $\phi_1(x),\dots,\phi_M(x)$  are linearly  dependent over $\kappa$, there exist constants $a_1,\dots,a_M \in \kappa$, not all zero, such that $a_1\phi_1(x) + \dots + a_M \phi_M(x) = 0$. Let $H$ be the hyperplane in $\PP^{M-1}$ defined by $a_1z_1 + \dots + a_M z_M = 0$; by assumption $\Phi( x(\calC))$ is contained in $H$. On the other hand, since $\phi_1,\dots,\phi_M$ is a basis of $H^0(X,\calL^N)$, it follows that $\Phi(X)$ is not contained in $H$, hence $ \Phi(x(\calC))$ is contained in $\Phi(X) \cap H$ which is an hypersurface in $\Phi(X)$ whose degree is bounded independently of $\calC$ and $x$ as wanted.
\end{proof}
\begin{corollary}
  \label{cor:surf}
  In the previous setting, if $X$ has dimension 2, then for any $\epsilon>0$, there exist constants $c_1$ and $c_3$, independent of the curve $\calC$ and the set $S$, such that for any $K$-point $x=[x_0:\cdots:x_m] :\calC\to X$, with $x_i\in K$, we have that, either $\deg x(\calC) \leq c_3$, or 
 \begin{align*}
   \sum_{i=1}^q \beta(\mathcal L, D_i) m_{D_i,S}(x)\le (1+\epsilon) h_{\calL}(x)+c_1 \max\{ 1, 2g(\calC)-2+|S| \}.
\end{align*}
\end{corollary}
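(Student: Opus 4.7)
The plan is to deduce Corollary \ref{cor:surf} as the two-dimensional specialization of Theorem \ref{thm:ffconst}. Applying the theorem for the given $\varepsilon>0$ produces constants $c_1$ and $c_2$ such that, for any map $x=[x_0:\cdots:x_m]:\calC\to X$ with image outside the augmented base locus of $\calL$, either the desired inequality
\[
  \sum_{i=1}^q \beta(\calL,D_i)\, m_{D_i,S}(x)\le (1+\varepsilon) h_{\calL}(x)+c_1\max\{1, 2g(\calC)-2+|S|\}
\]
holds, or the image $x(\calC)\subset\PP^m$ is contained in a hypersurface $H\subset\PP^m$ of degree at most $c_2$.

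To convert the second alternative into a degree bound on $x(\calC)$, I would inspect the proof of Theorem \ref{thm:ffconst}: there the hypersurface $H$ is obtained as the pullback to $X$ of a hyperplane $H_0\subset\PP^{M-1}$ which, by construction of the rational map $\Phi$ of \eqref{Phi} from a basis of $H^0(X,\calL^N)$, does \emph{not} contain $\Phi(X)$. In particular $X\not\subset H$, so $X\cap H$ is a proper closed subset of $X$ and, since $\dim X=2$, has dimension at most one. Bézout's theorem then gives
\[
  \deg_{\PP^m}(X\cap H) \le \deg X\cdot \deg H \le c_2\cdot \deg X,
\]
so every irreducible component of $X\cap H$ is a curve in $\PP^m$ of degree at most $c_3:=c_2\cdot\deg X$, a constant depending only on $X$, the $D_i$, $\calL$, and $\varepsilon$.

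The image $x(\calC)$ is either a point—in which case the degree bound is trivial—or an irreducible curve contained in $X\cap H$; in either case one concludes $\deg x(\calC)\le c_3$. Since neither $c_1$ nor $c_3$ depends on $\calC$, $S$ or $x$, this is exactly the dichotomy claimed in the corollary. I do not anticipate any serious obstacle: the argument is essentially a mechanical combination of Theorem \ref{thm:ffconst} with Bézout's theorem, and the only point to check is that the hypersurface supplied by the proof of Theorem \ref{thm:ffconst} can be chosen not to contain $X$, which is immediate from the fact that $\phi_1,\dots,\phi_M$ form a basis of $H^0(X,\calL^N)$.
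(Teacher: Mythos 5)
Your argument follows the same strategy as the paper's (reduce to the alternative of Theorem \ref{thm:ffconst} and bound the degree of the resulting curve), and the Bézout step that makes the degree bound explicit is fine. However, there is a genuine gap: you apply Theorem \ref{thm:ffconst} only to maps $x$ whose image lies \emph{outside} the augmented base locus of $\calL$, and never address what happens when $x(\calC)$ is contained in that locus. The statement of Corollary \ref{cor:surf} makes no such exclusion, so the dichotomy must also hold for those maps, and your $c_3 = c_2\cdot\deg X$ does not account for them.

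The missing case is easy to handle, and the paper does so explicitly: since $\calL$ is big and $\dim X = 2$, the augmented base locus $B_+(\calL)$ is a proper Zariski-closed subset of $X$, hence a finite union of curves and points, all of which depend only on $X$ and $\calL$ (not on $\calC$, $S$, or $x$). Any map $x$ with $x(\calC) \subset B_+(\calL)$ has image one of these finitely many curves (or a point), so its degree is uniformly bounded. One should therefore enlarge $c_3$ to be the maximum of $c_2\cdot\deg X$ and the degrees of the one-dimensional components of $B_+(\calL)$. With this addition your proof matches the paper's.
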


\begin{proof}
  Given Theorem \ref{thm:ffconst}, it is enough to observe that, since $X$ has dimension 2, the hypersurface of degree bounded by $c_2$ intersects $X$ in a union of finitely many curves whose degree is bounded independently of the point $x$. Moreover, the augmented base locus of $\calL$ has dimension at most 1, and its one dimensional locus is a union of finitely many curves, which are also independent of $x$. Therefore, it suffices to define $c_3$ to the be the maximum of the degrees of all these finitely many curves.
\end{proof}

\bibliography{references}{}
\bibliographystyle{alpha}

\end{document}